\def\titlerunning#1{\gdef\titrun{#1}}
\def\author#1{\gdef\autrun{\def\and{\unskip, }#1}\gdef\@author{#1}}
\def\address#1{{\def\and{\\\hspace*{18pt}}\renewcommand{\thefootnote}{}%
\footnote {#1}}%
\markboth{\autrun}{\titrun}}
\def\email#1{\hspace*{4pt}{\em e-mail}: #1}
\def\MSC#1{{\renewcommand{\thefootnote}{}%
\footnote{\emph{Mathematics Subject Classification (2020):} #1}}}
\def\keywords#1{\par\medskip
\noindent\textbf{Keywords:} #1}
\newtheorem{theorem}{Theorem}[section]
\newtheorem{prop}[theorem]{Proposition}
\newtheorem{cor}[theorem]{Corollary}
\newtheorem{lemma}[theorem]{Lemma}
\newtheorem{prob}[theorem]{Problem}
\newtheorem{defin}[theorem]{Definition}
\newtheorem{result}[theorem]{Result}
\theoremstyle{definition}
\newtheorem{construction}[theorem]{Construction}
\newtheorem{remark}[theorem]{Remark}
\numberwithin{equation}{section}
\def\u{{\boldsymbol u}}
\def\x{{\boldsymbol x}}
\def\y{{\boldsymbol y}}
\def\j{{\boldsymbol j}}
\def\0{\mathbf 0}
\def\P{\mathbf P}
\def\cL{\mathcal L}
\def\cA{\mathcal A}
\def\cC{\mathcal C}
\def\cB{\mathcal B}
\def\cE{\mathcal E}
\def\cF{\mathcal F}
\def\cG{\mathcal G}
\def\cH{\mathcal H}
\def\cM{\mathcal M}
\def\cK{\mathcal K}
\def\cO{\mathcal O}
\def\cP{\mathcal P}
\def\cX{\mathcal X}
\def\cD{\mathcal D}
\def\cR{\mathcal R}
\def\cS{\mathcal S}
\def\cW{\mathcal W}
\def\cQ{\mathcal Q}
\def\cZ{\mathcal Z}
\def\PG{{\rm PG}}
\def\GF{{\rm GF}}
\def\PGL{{\rm PGL}}
\def\PSL{{\rm PSL}}
\def\GaL{{\rm \Gamma L}}
\def\GL{{\rm GL}}
\def\SL{{\rm SL}}
\def\PGO{{\rm PGO}}
\def\PSp{{\rm PSp}}
\def\Sp{{\rm Sp}}
\def\PU{{\rm PU}}
\def\PSU{{\rm PSU}}
\def\U{{\rm U}}
\def\SU{{\rm SU}}
\def\diag{{\rm diag}}
\def\R{{\mathbb R}}
\begin{document}


\baselineskip=16pt


\titlerunning{}

\title{On regular systems of finite classical polar spaces}

\author{Antonio Cossidente \and Giuseppe Marino \and Francesco Pavese \and Valentino Smaldore}

\date{}

\maketitle

\address{A. Cossidente: Dipartimento di Matematica, Informatica ed Economia, Universit{\`a} degli Studi della Basilicata, Contrada Macchia Romana, 85100, Potenza, Italy; \email{antonio.cossidente@unibas.it}
\and
G. Marino: Dipartimento di Matematica e Applicazioni ``Renato Caccioppoli'', Universit{\`a} degli Studi di Napoli ``Federico II'', Complesso Universitario di Monte Sant'Angelo, Cupa Nuova Cintia 21, 80126, Napoli, Italy; \email{giuseppe.marino@unina.it}
\and
F. Pavese: Dipartimento di Meccanica, Matematica e Management, Politecnico di Bari, Via Orabona 4, 70125 Bari, Italy; \email{francesco.pavese@poliba.it}
\and
V. Smaldore: Dipartimento di Matematica, Informatica ed Economia, Universit{\`a} degli Studi della Basilicata, Contrada Macchia Romana, 85100, Potenza, Italy; \email{valentino.smaldore@unibas.it}
}

\bigskip

\MSC{Primary 51E20; Secondary 05B25}


\begin{abstract}
Let $\cP$ be a finite classical polar space of rank $d$. An $m$--regular system with respect to $(k-1)$--dimensional projective spaces of $\cP$, $1 \le k \le d-1$, is a set $\cal R$ of generators of $\cP$ with the property that every $(k-1)$--dimensional projective space of $\cP$ lies on exactly $m$ generators of $\cR$. Regular systems of polar spaces are investigated. Some non--existence results about certain $1$--regular systems of polar spaces with low rank are proved and a procedure to obtain $m'$--regular systems from a given $m$--regular system is described. Finally, three different construction methods of regular systems w.r.t. points of various polar spaces are discussed.

\keywords{polar space, regular system.}
\end{abstract}

\section{Introduction}

Let $\cP$ be a finite classical polar space, i.e., $\cP$ arises from a vector space of finite dimension over a finite field equipped with a non-degenerate reflexive sesquilinear or quadratic form. Hence $\cP$ is a member of one of the following classes: a symplectic space $\cW(2n+1,q)$, a parabolic quadric $\cQ(2n,q)$, a hyperbolic quadric $\cQ^+(2n+1,q)$, an elliptic quadric $\cQ^-(2n+1,q)$ or a Hermitian variety $\cH(n,q)$ ($q$ a square). A projective subspace of maximal dimension contained in $\cP$ is called a {\em generator} of $\cP$. The vector dimension of a generator of $\cP$ is called the {\em rank} of $\cP$. Here and in the sequel we will use the term polar space to refer to a finite classical polar space and $\cP_{d, e}$ will denote a polar space of rank $d \ge 2$ as follows:
$$
\begin{tabular}{c|c|c|c|c|c|c}
$\cP_{d, e}$ & $\cQ^+(2d-1,q)$ & $\cH(2d-1,q)$ & $\cW(2d-1, q)$ & $\cQ(2d, q)$ & $\cH(2d, q)$ & $\cQ^-(2d+1,q)$ \\
\hline
$e$  &  $0$ & $1/2$ & $1$ & $1$ & $3/2$ & $2$ 
\end{tabular}
$$
The term {\em $(k - 1)$--space of $\cP_{d, e}$} will be used to denote a totally isotropic or totally singular $(k - 1)$--dimensional projective space of $\cP_{d, e}$ and $\cM_{\cP_{d, e}}$ will denote the set of generators of $\cP_{d, e}$. The number of $(k-1)$--spaces of $\cP_{d, e}$ is given by
$$
\genfrac{[}{]}{0pt}{}{d}{k}_q \prod_{i = 1}^{k} \left(q^{d + e - i} + 1\right).
$$
The number $e$ is given above for each finite polar space.
For further details on finite classical polar spaces we refer to \cite{BCN, H, H1, HT, taylor}. The notion of regular system of a polar space dates back to $1965$ and was introduced for the first time by B. Segre in the context of Hermitian varieties \cite{Segre1}, see also \cite[Section 19.3]{H}.

\begin{defin}
An {\bf $m$--regular system} (or simply {\bf regular system}) with respect to $(k-1)$--spaces of $\cP_{d, e}$, $1 \le k \le d-1$, is a set $\cal R$ of generators of $\cP_{d, e}$ with the property that every $(k-1)$--space of $\cP_{d, e}$ lies on exactly $m$ generators of $\cal R$, $0 \le m \le |\cM_{\cP_{d-k, e}}|$, where $|\cM_{\cP_{d-k, e}}|$ is the number of generators of $\cP_{d, e}$ passing through one of its $(k-1)$--spaces.
\end{defin}

The concept of regular system generalizes the well known notion of {\em spread} \cite[Chapter 7]{HT}. A regular system with respect to $(k - 1)$--spaces of $\cP_{d, e}$ having the same size as its complement (in $\cM_{\cP_{d, e}}$) is said to be a {\bf hemisystem} with respect to $(k - 1)$--spaces of $\cP_{d, e}$ (of course here $|\cM_{\cP_{d, e}}|$ must be even). From \cite{BLL, Segre1, Van1}, a regular system w.r.t. $(d-2)$--spaces of $\cP_{d, e}$, $d \ge 3$ and $e \le 1$, or $(d, e) = (2, 1/2)$ is a hemisystem. Not much is known about regular systems w.r.t. $(k-1)$--spaces of a polar space. If $k \ge 2$ and the polar space is not a hyperbolic quadric only sporadic examples are known and they are hemisystems w.r.t. lines of $\cQ(6, q)$, $q\in\{3, 5\}$, see \cite{BLL}. If $k = 1$ and the rank $d$ of the polar space is at least three, then some constructions are known on the parabolic quadric $\cQ(2d, q)$ \cite{CPa, LN} and on the Hermitian variey $\cH(2d-1, q)$ \cite{LB, CP1}. Finally since polar spaces of rank two are generalized quadrangles, several examples of regular systems arise from $m$--ovoids by using duality, see \cite{BDS, BGR, BLMX, BLP, BW, CCEM, CPa, CP, FMX, FT, KNS, PW, Pa}. However, in the case of generalized quadrangles, many questions are still unsolved.

In this paper we deal with regular systems of polar spaces. In Section \ref{sec_1} preliminary notions are introduced along with some non--existence results about certain $1$--regular systems of polar spaces having low rank. In Section \ref{sec_2} we describe how in some cases it is possible to obtain $m'$--regular systems starting from a given $m$--regular system. In the remaining part of the paper we are mainly concerned with constructions of regular systems w.r.t. points of various polar spaces. In particular three different methods are presented: by partitioning the generators of an elliptic quadric $\cQ^-(2d+1, q)$ into generators of hyperbolic quadrics $\cQ^+(2d-1, q)$ embedded in it (Section~\ref{sec_3}); regular systems w.r.t. points of a polar space $\cP$ obtained by means of a $k$--system of $\cP$ (Section~\ref{sec_4}); regular systems w.r.t. points arising from field reduction (Section~\ref{sec_5}).

\section{Preliminaries and first results}\label{sec_1}

We summarize some straightforward properties regarding regular systems of a polar space $\cP_{d, e}$.

\begin{lemma}\label{properties}
Let $\cA$ and $\cB$ be an $m$--regular system and an $m'$--regular system w.r.t. $(k-1)$--spaces of $\cP_{d, e}$, respectively. Then:
\begin{itemize}
\item[$i)$] $| \cA | = m \prod_{i = 1}^{k} \left(q^{d + e - i} + 1\right)$; 
\item[$ii)$] a regular system of $\cP_{d, e}$ w.r.t. $(k-1)$--spaces is also a regular system w.r.t. $(k'-1)$--spaces, $1 \le k' \le k-1$;
\item[$iii)$] if $\cA \subseteq \cB$, then $\cB \setminus \cA$ is $(m'-m)$--regular system w.r.t. $(k-1)$--spaces of $\cP_{d, e}$;
\item[$iv)$] if $\cA$ and $\cB$ are disjoint, then $\cA \cup \cB$ is $(m+m')$--regular system w.r.t. $(k-1)$--spaces of $\cP_{d, e}$;
\item[$v)$] the empty set and $\cM_{\cP_{d, e}}$ are trivial examples of $(k-1)$--regular systems of $\cP_{d, e}$, for each $1\leq k\leq d-1$.
\end{itemize}
\end{lemma}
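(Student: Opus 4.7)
The plan is to establish all five items by elementary double counting, with the only non-trivial ingredient being that the residue of a totally isotropic/singular subspace of $\cP_{d,e}$ is again a polar space of the same type.

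For item $(i)$, I would count in two ways the flags $(\sigma,G)$ with $\sigma$ a $(k-1)$-space, $G\in\cA$, and $\sigma\subseteq G$. Since a generator is a $(d-1)$-space, it contains exactly $\genfrac{[}{]}{0pt}{}{d}{k}_q$ $(k-1)$-spaces; on the other hand, the number of $(k-1)$-spaces of $\cP_{d,e}$ is $\genfrac{[}{]}{0pt}{}{d}{k}_q\prod_{i=1}^{k}(q^{d+e-i}+1)$ (the formula recalled in the introduction), and by hypothesis each of them lies on exactly $m$ generators of $\cA$. Equating $|\cA|\genfrac{[}{]}{0pt}{}{d}{k}_q$ with $m\genfrac{[}{]}{0pt}{}{d}{k}_q\prod_{i=1}^{k}(q^{d+e-i}+1)$ and cancelling the Gaussian coefficient yields the desired formula for $|\cA|$.

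For item $(ii)$, fix $k'\le k-1$ and a $(k'-1)$-space $\pi$, and count pairs $(\sigma,G)$ with $\pi\subseteq\sigma\subseteq G$, $\sigma$ a $(k-1)$-space, and $G\in\cA$. Summing over $\sigma$ first, the number of $(k-1)$-spaces $\sigma$ through $\pi$ equals the number of $(k-k'-1)$-spaces of the residue $\cP_{d-k',e}$, a constant $N$ independent of $\pi$; each such $\sigma$ lies on $m$ generators of $\cA$ by hypothesis, so the total is $Nm$. Summing over $G$ first, each generator $G\ni\pi$ contains exactly $\genfrac{[}{]}{0pt}{}{d-k'}{k-k'}_q$ $(k-1)$-spaces through $\pi$. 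It follows that the number of generators of $\cA$ through $\pi$ equals $Nm/\genfrac{[}{]}{0pt}{}{d-k'}{k-k'}_q$, a constant $m'$ independent of the chosen $\pi$, which is exactly the definition of an $m'$-regular system w.r.t.\ $(k'-1)$-spaces. Items $(iii)$ and $(iv)$ follow from the additivity in $\cA$ of the function ``number of generators of $\cA$ through $\sigma$'': for every $(k-1)$-space $\sigma$ the value on $\cB\setminus\cA$ is $m'-m$ and the value on $\cA\cup\cB$ (with $\cA\cap\cB=\emptyset$) is $m+m'$. Item $(v)$ is then immediate from the definition with $m=0$ and $m=|\cM_{\cP_{d-k,e}}|$.

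The only substantive point is the constancy statement used in $(ii)$, namely that the number of $(k-1)$-spaces of $\cP_{d,e}$ containing a fixed $(k'-1)$-space $\pi$ does not depend on $\pi$. This can be justified either by invoking the transitivity of the classical group on $(k'-1)$-spaces of a given type, or, more structurally, by recalling that the quotient of $\cP_{d,e}$ by $\pi$ is a finite classical polar space of rank $d-k'$ and of the same type, a standard fact from the references cited in the introduction. All remaining manipulations are routine double counts.
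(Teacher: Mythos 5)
Your double-counting arguments are correct and complete; the paper itself states this lemma without proof as a collection of straightforward properties, and your flag counts for $(i)$ and $(ii)$ (using that the residue of a $(k'-1)$--space is a polar space of rank $d-k'$ with the same parameter $e$), together with the additivity observation for $(iii)$--$(v)$, are exactly the standard intended justification. No gaps.
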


A $d$--{\em class association scheme} is a set $X$, together with $d+1$ symmetric relations $R_i$ on $X$ such that $\{R_0,\dots,R_d\}$ partitions $X\times X$, the identity relation on $X\times X$ is $R_0$, and for any $(x,y)\in R_k$, the numbers $p^k_{i j} =|\{z\in X \;\; | \;\; (x,z)\in R_i \;{\rm and}\; (z,y)\in R_j\}|$ depend only on $i, j, k$ (and not on $x$ and $y$).  Association schemes were first defined in 1952 by Bose and Shimamoto \cite{BS}.  A strongly regular graph gives rise to a $2$-class association scheme by defining $(x, y)\in  R_1$ if $x$ is adjacent to $y$, and $(x, y)\in R_2$ if $x$ and $y$ are distinct and non--adjacent. Any $2$-class association scheme can be viewed as coming from a strongly regular graph in this way. Similarly, a distance regular graph with diameter $d$ gives a $d$--class association scheme by defining $(x, y) \in R_i$ if $d(x, y) = i$.  

%
%


Let $\cD^i_{\cP_{d, e}}$ be the {\em $i$--th distance graph of $\cP_{d, e}$}, that is the graph having as vertex the members of $\cM_{\cP_{d, e}}$ and two vertices $x, y \in \cM_{\cP_{d, e}}$ are adjacent whenever $x \cap y$ is a $(d-i-1)$--space of $\cP_{d, e}$. $0 \le i \le d$. The {\em $i$--th} distance graph $\cD^i_{\cP_{d, e}}$ is regular. If $i = 1$, then $\cD^1_{\cP_{d, e}}$ is called {\em dual polar graph of $\cP_{d, e}$} and it is symply denoted by $\cD_{\cP_{d, e}}$. The graph $\cD_{\cP_{d, e}}$ is a distance--regular graph with diameter $d$, and hence it gives a $d$--class association scheme. Two vertices of  $\cD_{\cP_{d,e}}$ are adjacent if the corresponding generators meet in a $(d-2)$--space. For more information on $\cD_{\cP_{d, e}}$, the reader is referred to \cite[Section 9.4]{BCN}.


For a matrix $M$, we denote its column span by $Im(M)$ and its transpose by $M^t$. Let $I$ be the identity matrix, $J$ be the all ones matrix and $\j$ the all-one vector of suitable length. Fix an ordering on $\cM_{\cP_{d, e}}$. This provides an ordered basis for the vector space $V = \R^{\cM_{\cP_{d, e}}}$. Note that $V$ has dimension $n$, where $n = |\cM_{\cP_{d, e}}|$. For a subset $S \subseteq \cM_{\cP_{d, e}}$, let $\chi_{S}$ be the characteristic column vector of $S$. Let $A_i$ be the adjacency matrix of the {\em $i$--th} distance graph $\cD^i_{\cP_{d, e}}$ of $\cP_{d, e}$ for $0 \le i \le d$, that is, $A_i$ will have a $1$ in the $(j, k)$ position if $d(x_j, x_k) = i$ and $0$ otherwise. This gives a set of $d + 1$ symmetric matrices $A_0, \dots, A_d$, with $A_0 = I$, $\sum_{i = 0}^{d} A_i = J$, and $A_i A_ j =\sum_{k = 0}^{d} p^k_{i j} A_k$. The $\R$--vector space generated by the matrices $A_0, \dots, A_d$ is closed under matrix multiplication and so it is a $(d + 1)$--dimensional commutative $\R$--algebra, also known as the {\em Bose--Mesner algebra} of the association scheme. 

The Bose--Mesner algebra admits a basis $\{E_0, \dots, E_d\}$ consisting of minimal symmetric idempotents. Here $E_0 = J/n$. Moreover, $E_i E_j = \delta_{i j} E_i$ and every minimal idempotent $E_j$ is such that $A_1 E_j = \lambda_j E_j$, where $\lambda_j$ is an eigenvalue of $A_1$. The subspaces $V_0, \dots, V_d$, with $V_i = Im(E_i)$, form an orthogonal decomposition of $V$, i.e., $V = V_0 \perp \dots \perp V_d$. Let $C_k$ be the incidence matrix between the $(k-1)$--spaces of $\cP_{d, e}$ and the generators of $\cP_{d, e}$, $1 \le k \le d$. This means that the rows are indexed by the $(k-1)$--spaces, the columns by the generators of the polar space and $(C_k)_{y x} = 1$ if $y \subseteq x$ and $(C_k)_{y x} = 0$ if $y \not\subseteq x$. Now up to a reordering of the indices we have that $Im(C_k^t) = V_0 \perp \dots \perp V_k$, where $V_0=Im(C_0^t)$  and $V_k = Im(C_k^t) \cap Ker(C_{k-1})$. For more results on this topic the interested reader is referred to \cite{BCN, Delsarte1, Delsarte2, Van, Van2}.

\bigskip
The {\em dual degree set} of a set $S$ of vertices of $\cD_{\cP_{d, e}}$ is the set of non--zero indices $1 \le i \le d$ such that $E_i \chi_{S} \ne 0$ or equivalently $\chi_{S} \notin V_i^\perp$. Two sets of vectors $S_1$ and $S_2$ of $\cD_{\cP_{d, e}}$ are called {\em design--orthogonal} when their dual degree sets are disjoint. 

If $|\{ 1 \le i \le d \;\; | \;\; E_i \chi_{S} \ne 0\} \cap \{1, \dots, k\}| = 0$, then $S$ is said to be a {\em $k$--design}, whereas if $\{ 1 \le i \le d \;\; | \;\; E_i \chi_{S} \ne 0\} \subseteq \{1, \dots, k\}$ then $S$ is called a {\em $k$--antidesign}. Clearly $k$--designs and $k$--antidesigns are design--orthogonal subsets.

The following result has been proved in \cite[Theorem 4.4.1]{Van}, see also \cite{Delsarte2}.

\begin{prop}\label{design}
	A set $\cR \subset \cM_{\cP_{d, e}}$ is an $m$--regular system with respect to $(k-1)$--spaces of $\cP_{d, e}$, $1 \le k \le d-1$, if and only if $\cR$ is a $k$--design. 
\end{prop}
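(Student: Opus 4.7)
The plan is to recast the regularity condition as a linear-algebraic condition on $\chi_{\cR}$ using the incidence matrix $C_k$, and then to read it off via the orthogonal decomposition $Im(C_k^t) = V_0 \perp V_1 \perp \dots \perp V_k$ recalled in the preliminaries.

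First I would translate definitions into matrix form. Let $\j$ denote the all-ones vector indexed by $(k-1)$-spaces of $\cP_{d,e}$ and let $\mathbf{1}$ denote the all-ones vector in $V = \R^{\cM_{\cP_{d, e}}}$. By definition, $\cR$ is an $m$-regular system with respect to $(k-1)$-spaces of $\cP_{d,e}$ if and only if $C_k \chi_{\cR} = m\,\j$. Because $\cP_{d,e}$ is homogeneous, every $(k-1)$-space lies on the same number $N$ of generators, so $C_k \mathbf{1} = N\,\j$. Hence $C_k \chi_{\cR} = m\,\j$ is equivalent to
\[
C_k\!\left(\chi_{\cR} - \tfrac{m}{N}\mathbf{1}\right) = 0,
\qquad\text{i.e.,}\qquad
\chi_{\cR} - \tfrac{m}{N}\mathbf{1} \in Ker(C_k).
\]
From the orthogonal decomposition $V = V_0 \perp V_1 \perp \dots \perp V_d$ together with $Im(C_k^t) = V_0 \perp \dots \perp V_k$, taking orthogonal complements yields $Ker(C_k) = V_{k+1} \perp \dots \perp V_d$.

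For the forward direction, suppose $C_k \chi_{\cR} = m\,\j$. Since $\mathbf{1} \in V_0$, the identity displayed above gives a decomposition of $\chi_{\cR}$ into a component in $V_0$ and a component in $V_{k+1} \perp \dots \perp V_d$. By uniqueness of the orthogonal decomposition along the $V_i$, this forces $E_i \chi_{\cR} = 0$ for every $1 \le i \le k$, so $\cR$ is a $k$-design.

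For the backward direction, if $E_i \chi_{\cR} = 0$ for all $1 \le i \le k$, then $\chi_{\cR} - E_0 \chi_{\cR} \in V_{k+1} \perp \dots \perp V_d = Ker(C_k)$. Since $E_0 = J/n$ gives $E_0 \chi_{\cR} = (|\cR|/n)\,\mathbf{1}$, applying $C_k$ yields
\[
C_k \chi_{\cR} = \tfrac{|\cR|}{n}\,C_k \mathbf{1} = \tfrac{|\cR| N}{n}\,\j,
\]
so every $(k-1)$-space lies on the same number $m := |\cR|N/n$ of generators of $\cR$, and $\cR$ is an $m$-regular system (with the value of $m$ consistent with Lemma \ref{properties}(i)).

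There is no real obstacle here: the content of the statement is entirely absorbed into the identification $Ker(C_k) = V_{k+1} \perp \dots \perp V_d$, which is essentially stated in the preliminaries. The only care required is bookkeeping between the two all-ones vectors $\j$ and $\mathbf{1}$ living in different spaces, and verifying that the coefficient $m/N$ produced by the forward direction is compatible with the formula $|\cR| = m\prod_{i=1}^{k}(q^{d+e-i}+1)$.
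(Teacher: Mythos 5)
Your argument is correct and is essentially the standard Delsarte/Vanhove proof: the paper does not reprove this statement but cites \cite[Theorem 4.4.1]{Van}, and the argument there is exactly your identification of the regularity condition $C_k\chi_{\cR}=m\,\j$ with membership of $\chi_{\cR}-\frac{m}{N}\mathbf{1}$ in $Ker(C_k)=(V_0\perp\dots\perp V_k)^{\perp}=V_{k+1}\perp\dots\perp V_d$. Your bookkeeping of the two all-ones vectors and the compatibility of $m=|\cR|N/n$ with Lemma \ref{properties}$(i)$ both check out, so nothing is missing.
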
 

The following result on the intersection size between design--othogonal subsets can be found in \cite{Delsarte3, Roos}.

\begin{result}
If $S_1$ and $S_2$ are design--orthogonal subsets of an association scheme $X$, then 
\begin{equation}\label{orthogonal}
|S_1 \cap S_2| = \frac{|S_1| |S_2|}{|X|}. 
\end{equation}
\end{result}

The eigenvalues of the {\em $i$--th} distance graph $\cD^i_{\cP_{d, e}}$ of $\cP_{d, e}$ are those of its adjacency matrix $A_i$. A {\em coclique} of $\cD^i_{\cP_{d, e}}$ is a set of pairwise nonadjacent vertices. Hence a coclique of $\cD^i_{\cP_{d, e}}$ is a set of generators of $\cP_{d, e}$  pairwise not intersecting in a $(d-i-1)$--space. The independence number $\alpha(\cD^i_{\cP_{d, e}})$ is the size of the largest coclique of $\cD^i_{\cP_{d, e}}$. The graph $\cD^i_{\cP_{d, e}}$ has at most $d+1$ distinct eigenvalues \cite{Eisfeld}, \cite[Theorem 4.3.6]{Van}. Let $k_i$ and $\lambda_{i}$ denote the largest and the smallest eigenvalue of $\cD^i_{\cP_{d, e}}$, respectively; hence $k_i$ is the valency of $\cD^i_{\cP_{d, e}}$. The following result is due to Hoffman, see \cite[Theorem 3.5.2]{BH}.

\begin{lemma}\label{hoffman}
$\alpha(\cD^i_{\cP_{d, e}}) \le -\frac{|\cM_{\cP_{d, e}}| \lambda_i}{k_i - \lambda_i}$.
\end{lemma}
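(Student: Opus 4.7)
The plan is to apply the standard Hoffman ratio bound argument, using the eigenspace decomposition provided by the Bose–Mesner algebra of the association scheme attached to $\cD_{\cP_{d,e}}$. Let $S$ be a coclique in $\cD^i_{\cP_{d,e}}$ of maximum size $\alpha := \alpha(\cD^i_{\cP_{d,e}})$, let $n = |\cM_{\cP_{d,e}}|$, and consider its characteristic vector $\chi_S \in V$. Because $A_i$ lies in the Bose–Mesner algebra, it is diagonalized by the orthogonal decomposition $V = V_0 \perp V_1 \perp \dots \perp V_d$, with $A_i$ acting as scalar $\mu_j$ on $V_j$. The scalar on $V_0 = \langle \j \rangle$ is the valency $k_i$, and by assumption the $\mu_j$ for $j \ge 1$ lie in the spectrum of $A_i$, hence each satisfies $\mu_j \ge \lambda_i$.

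Decompose $\chi_S = \sum_{j=0}^{d} v_j$ with $v_j \in V_j$. Since $v_0$ is the orthogonal projection onto $V_0$, we have $v_0 = (\alpha/n)\j$, so $\|v_0\|^2 = \alpha^2/n$, while $\sum_{j=0}^{d} \|v_j\|^2 = \|\chi_S\|^2 = \alpha$. Because $S$ is a coclique of $\cD^i_{\cP_{d,e}}$, no two distinct vertices of $S$ are adjacent in that graph, and therefore
\begin{equation*}
0 \;=\; \chi_S^{\,t} A_i \chi_S \;=\; \sum_{j=0}^{d} \mu_j \|v_j\|^2 \;\ge\; k_i \|v_0\|^2 + \lambda_i \sum_{j=1}^{d} \|v_j\|^2 \;=\; k_i\,\frac{\alpha^2}{n} + \lambda_i\left(\alpha - \frac{\alpha^2}{n}\right).
\end{equation*}

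Rearranging this inequality, and using that $\lambda_i < 0$ (the smallest eigenvalue of a non-empty regular graph with at least one edge is negative), division by $\alpha > 0$ gives $-\lambda_i \ge (k_i - \lambda_i)\alpha/n$, i.e. $\alpha \le -n\lambda_i/(k_i - \lambda_i)$, as desired.

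There is no real obstacle: the only thing to verify beyond routine linear algebra is that the $V_j$ obtained from the dual polar scheme on $\cD_{\cP_{d,e}}$ are simultaneously eigenspaces of every $A_i$ (so that the eigenvalues $\mu_j$ of $A_i$ range exactly over its spectrum, justifying $\mu_j \ge \lambda_i$ for all $j \ge 1$), and that $V_0$ contributes the valency $k_i$. Both facts are standard for the Bose–Mesner algebra already recalled in the preliminaries, and the bound follows.
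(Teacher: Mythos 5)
Your argument is the standard Hoffman ratio bound proof and it is correct: the expansion of $\chi_S^t A_i \chi_S = 0$ over the common eigenspaces $V_0 \perp \dots \perp V_d$ of the Bose--Mesner algebra, together with $\mu_0 = k_i$ on $\langle \j \rangle$ and $\mu_j \ge \lambda_i$ for $j \ge 1$, gives exactly the stated inequality. The paper does not prove this lemma itself but cites it as Hoffman's bound from \cite[Theorem 3.5.2]{BH}, where essentially the same eigenvalue-interlacing argument appears, so your proposal supplies precisely the proof the paper defers to.
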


Let $\cR$ be a $1$--regular system w.r.t. $(k - 1)$--spaces of $\cP_{d, e}$. Then every $(k-1)$--space of $\cP_{d, e}$ is contained in exactly one member of $\cR$ and therefore two distinct generators of $\cR$ intersect in at most a $(k-2)$--space of $\cP_{d, e}$. In particular $\cR$ is a coclique of the graph $\cD^{d-k}_{\cP_{d, e}}$ and hence 
\begin{equation}\label{inequality}
|\cR| \le -\frac{|\cM_{\cP_{d, e}}| \lambda_i}{k_i - \lambda_i}.
\end{equation}
In some cases Equation \eqref{inequality} yields a contradiction. We study the cases when $\cR$ is a $1$--regular system of a polar space with rank $4$ or $5$.
\begin{theorem}
The polar spaces $\cQ^+(7, q)$, $\cH(7, q)$, $\cW(7, q)$, $\cQ(8, q)$, $\cH(8, q)$, $\cQ^-(9, q)$ do not have a $1$--regular system w.r.t. lines. The polar spaces $\cQ^+(9, q)$, $\cH(9, q)$, $\cW(9, q)$, $\cQ(10, q)$, $\cH(10, q)$, $\cQ^-(11, q)$ do not have a $1$--regular system w.r.t. planes. 
\end{theorem}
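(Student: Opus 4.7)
The plan is to apply Hoffman's coclique bound (Lemma \ref{hoffman}) to the distance graph $\cD^{d-k}_{\cP_{d,e}}$ in each of the twelve cases and derive a contradiction with the cardinality forced on a $1$--regular system by Lemma \ref{properties}(i). As already observed in the excerpt immediately before the statement, a $1$--regular system $\cR$ w.r.t.\ $(k-1)$--spaces of $\cP_{d,e}$ is a coclique of $\cD^{d-k}_{\cP_{d,e}}$, so inequality \eqref{inequality} applies with $i = d-k$; here $(d,k)=(4,2)$ for the first assertion and $(d,k)=(5,3)$ for the second, and in both cases $d-k=2$, so only $\cD^{2}_{\cP_{4,e}}$ and $\cD^{2}_{\cP_{5,e}}$ need to be analysed.

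First I would record the two quantities entering the left--hand side: by Lemma \ref{properties}(i),
$$
|\cR| \;=\; \prod_{i=1}^{k}\bigl(q^{d+e-i}+1\bigr),
$$
which specialises to $(q^{e+3}+1)(q^{e+2}+1)$ in rank $4$ and to $(q^{e+4}+1)(q^{e+3}+1)(q^{e+2}+1)$ in rank $5$; and by setting $k=d$ in the formula from the introduction,
$$
|\cM_{\cP_{d,e}}| \;=\; \prod_{j=0}^{d-1}\bigl(q^{e+j}+1\bigr).
$$

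Next I would look up the eigenvalues of $\cD^{i}_{\cP_{d,e}}$ in \cite[Section~9.4]{BCN} (see also \cite{Eisfeld}). For each of the six values of $e$, these are explicit Laurent polynomials in $q$ and $q^{e}$, with at most $d+1$ distinct values. For $\cD^{2}_{\cP_{4,e}}$ and $\cD^{2}_{\cP_{5,e}}$ I would tabulate all the eigenvalues, identify the valency $k_{d-k}$ as the largest, and locate the smallest eigenvalue $\lambda_{d-k}$ (this typically corresponds to the $j$--index adjacent to the valency in the $P$--matrix, but should be confirmed by direct comparison of the finitely many candidates).

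Finally, case by case, I would substitute into
$$
|\cR| \;\le\; -\,\frac{|\cM_{\cP_{d,e}}|\,\lambda_{d-k}}{k_{d-k}-\lambda_{d-k}}
$$
and show it fails for every admissible prime power $q$ (respecting the square condition in the Hermitian cases). The main obstacle is bookkeeping rather than ideas: correctly picking out $\lambda_{d-k}$ among the $d+1$ candidates in each of the twelve cases, and then reducing the rational inequality to one of the form $0<f(q)$ with $f$ a polynomial having positive leading coefficient. Since the $q$--degree of $|\cR|$ is expected to strictly exceed the $q$--degree of the Hoffman bound in each case, the inequality will be asymptotic in $q$, and any residual small prime powers can be checked directly.
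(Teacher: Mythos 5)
Your proposal is correct and follows essentially the same route as the paper: the authors likewise observe that a $1$--regular system w.r.t.\ $(k-1)$--spaces is a coclique of $\cD^{d-k}_{\cP_{d,e}}$ (with $d-k=2$ in all twelve cases), extract the largest and smallest eigenvalues of $\cD^{2}_{\cP_{4,e}}$ and $\cD^{2}_{\cP_{5,e}}$ from \cite[Theorem 4.3.6]{Van}, and substitute into the Hoffman bound \eqref{inequality} to contradict the size $\prod_{i=1}^{k}(q^{d+e-i}+1)$ forced by Lemma \ref{properties}(i).
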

\begin{proof}
From \cite[Theorem 4.3.6]{Van}, the eigenvalues of $\cD^{2}_{\cP_{4, e}}$ are $q^{2e+1}(q^2+1)(q^2+q+1), (q^{2e+1} - q^e)(q^2+q+1), -q^e(q+1)^2+q^{2e+1}+q, -(q^e-q)(q^2+q+1), q(q^2+1)(q^2+q+1)$. Let $\cR$ be a $1$--regular system w.r.t. lines of a polar space $\cP_{4, e}$. By Equation \eqref{inequality} we have that 
\begin{align*}
& (q^2+1)(q^3+1) = |\cR| \le 2(q^3+1), & \mbox{ for } \cQ^+(7, q), \\ 
& \left(q^\frac{5}{2}+1\right)\left(q^\frac{7}{2}+1\right) = |\cR| \le \frac{ \left(q^\frac{1}{2}(q+1)^2-q^2-q\right) \prod_{i = 1}^{4} \left( q^{\frac{9-2i}{2}} + 1 \right)}{q^2(q^2+1)(q^2+q+1) + q^\frac{1}{2}(q+1)^2-q^2-q} & \mbox{ for } \cH(7, q), \\
& (q^3+1)(q^4+1) = |\cR| \le \frac{2(q+1)(q^2+1)(q^3+1)(q^4+1)}{q(q^2+1)(q^2+q+1) + 2} & \mbox{ for } \cW(7, q), \cQ(8, q), \\
& \left( q^\frac{7}{2} + 1\right)\left(q^\frac{9}{2} + 1\right) \le \frac{\left( q^\frac{1}{2} - 1 \right) \left( q^\frac{3}{2} + 1 \right) \left( q^\frac{5}{2} + 1 \right) \left( q^\frac{7}{2} + 1 \right) \left( q^\frac{9}{2} + 1 \right)}{q^3(q^2+1) + q^\frac{1}{2} - 1} & \mbox{ for } \cH(8, q), \\
& (q^4+1)(q^5+1) = |\cR| \le \frac{(q-1)(q^2+1)(q^3+1)(q^4+1)(q^5+1)}{q^4(q^2+1)+q-1}  & \mbox{ for } \cQ^-(9, q).
\end{align*}
In all cases we get a contradiction. 

Similarly the largest and the smallest eigenvalue of $\cD^2_{\cP_{5, e}}$ are $q^{2e+1}(q^2+1)(q^5-1)/(q-1)$ and $-q^e(q+1)(q^2+q+1)+q^{2e+1}+q(q^2+q+1)$. Hence if $\cR$ is a $1$--regular system w.r.t. planes of $\cP_{5, e}$, it follows that  
\begin{align*}
& \prod_{i = 1}^{3} \left( q^{5-i} + 1 \right) = |\cR| \le \frac{2(q+1)(q^2+1)(q^4+1)}{q^2+q+1} & \mbox{ for } \cQ^+(9, q), \\
& \prod_{i = 1}^{3} \left( q^{\frac{11-2i}{2}} + 1 \right) = |\cR| \le  \frac{ \left(q^\frac{1}{2}(q+1)(q^2+q+1) -q (q+1)^2 \right) \prod_{i = 1}^{5} \left( q^{\frac{11-2i}{2}} + 1 \right)}{\frac{q^2(q^2+1) (q^5-1)}{q-1} + q^\frac{1}{2}(q+1)(q^2+q+1) -q (q+1)^2} & \mbox{ for } \cH(9, q), \\
& \prod_{i = 1}^{3}(q^{6-i}+1) = |\cR| \le \frac{(q+1)(q^2+1)(q^4+1)(q^5+1)}{q^2+q+1} & \mbox{ for } \cW(7, q), \cQ(8, q), \\
& \prod_{i = 1}^{3} \left( q^{\frac{13-2i}{2}} + 1 \right) = |\cR| \le  \frac{ \left(q^\frac{3}{2}(q+1)(q^2+q+1) -q (q+1)(q^2+1) \right) \prod_{i = 1}^{5} \left( q^{\frac{13-2i}{2}} + 1 \right)}{\frac{q^4(q^2+1) (q^5-1)}{q-1} + q^\frac{3}{2}(q+1)(q^2+q+1) - q (q+1)(q^2+1)} & \mbox{ for } \cH(10, q), \\
& \prod_{i = 1}^{3} (q^{7-i}+1) = |\cR| \le \frac{(2q^4+q^3-q) \prod_{i = 1}^{5} (q^{7-i}+1)}{\frac{q^5(q^2+1)(q^5-1)}{q-1} + 2q^4+q^3-q} & \mbox{ for } \cQ^-(11, q).
\end{align*}
Again in all cases we get a contradiction.
\end{proof}

In order to obtain similar results for polar spaces of higher rank, the knowledge of the smallest eigenvalue of the {\em $i$--th} distance graph is needed.

\begin{prob}
Determine the smallest eigenvalue of the graph $\cD^i_{\cP_{d, e}}$.
\end{prob}



\section{Chains of regular systems}\label{sec_2}

Let $\cP_{d, e}$ be one of the following polar spaces: $\cQ(2d, q)$, $\cQ^-(2d+1, q)$, $\cH(2d, q)$, $d \ge 2$, and consider a projective subspace of the ambient projective space meeting $\cP_{d, e}$ in a cone, say $\cK$, having as vertex a $(k-2)$--space of $\cP_{d, e}$ and as base a $\cQ^+(2d-2k+1, q)$, $\cQ(2d-2k+2, q)$, $\cH(2d-2k+1, q)$, respectively, where $1 \le k \le d-1$. Let $\Gamma_{r}$ be the set of $(r-1)$--spaces of $\cP_{d, e}$ contained in $\cK$, $1 \le r \le d-1$, and denote by $\Omega_{r}$ the set of generators of $\cP_{d, e}$ meeting $\cK$ in exactly an $(r-1)$--space, $\max\{k, d-k\} \le r \le d$. Hence $\Omega_{d}$ is the set of generators of $\cP_{d, e}$ contained in $\cK$.

\begin{lemma}[{\cite[Theorem 2.4]{Van2}}]\label{cone1}
Let $\sigma$ be an $(r-1)$--space of $\cP_{d, e}$, $1 \le r \le d-1$, and let $\cS$ be the set of generators containing $\sigma$, then $\cS$ is an $r$--antidesign.
\end{lemma}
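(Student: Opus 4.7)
The plan is to exhibit $\chi_{\cS}$ explicitly as an element of $Im(C_r^t)$, and then invoke the orthogonal decomposition $Im(C_r^t) = V_0 \perp V_1 \perp \cdots \perp V_r$ recorded earlier in the Preliminaries. The definition of antidesign then follows immediately, since having $\chi_{\cS}$ in this sum is the same as saying $E_i \chi_{\cS} = 0$ for every $i > r$.

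Concretely, I would unpack the incidence matrix $C_r$ between $(r-1)$-spaces and generators of $\cP_{d,e}$. Its transpose $C_r^t$ has rows indexed by generators and columns indexed by $(r-1)$-spaces, with entry $(C_r^t)_{x,y} = 1$ exactly when $y \subseteq x$. Reading off the column of $C_r^t$ labelled by our fixed $(r-1)$-space $\sigma$, one sees that this column is precisely the characteristic vector of the set of generators of $\cP_{d,e}$ containing $\sigma$, that is, $\chi_{\cS}$. In particular $\chi_{\cS} \in Im(C_r^t)$.

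Combining this with $Im(C_r^t) = V_0 \perp V_1 \perp \cdots \perp V_r$ gives $\chi_{\cS} \in V_0 \perp V_1 \perp \cdots \perp V_r$, and orthogonality of the decomposition $V = V_0 \perp \cdots \perp V_d$ implies $E_i \chi_{\cS} = 0$ for all $i > r$. Hence $\{1 \le i \le d : E_i \chi_{\cS} \ne 0\} \subseteq \{1, \dots, r\}$, which is exactly the definition of an $r$-antidesign.

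There is not really a hard step here: the lemma is essentially a repackaging of the fact, quoted from the literature before the statement, that the column span of $C_r^t$ equals $V_0 \perp \cdots \perp V_r$. The only thing one has to be careful about is the bookkeeping: making sure that the column of $C_r^t$ indexed by $\sigma$ really equals $\chi_{\cS}$ (and not its transpose or complement), and noting that $\chi_{\cS}$ being a \emph{single} column of $C_r^t$ is a much stronger statement than merely lying in $Im(C_r^t)$, which is all we need.
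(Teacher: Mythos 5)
Your proposal is correct and is essentially the paper's own argument: the paper writes $\chi_{\cS} = C_r^t \chi_{\sigma}$ (the column of $C_r^t$ indexed by $\sigma$) and concludes from $Im(C_r^t) = V_0 \perp \dots \perp V_r$. Your version only adds the explicit bookkeeping of the incidence matrix entries and the final unpacking of the antidesign definition, both of which match the paper's intent.
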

\begin{proof}
Note that $\chi_{\cS} = C_r^t \chi_{\sigma}$. Since $Im(C_r^t) = V_0 \perp \dots \perp V_r$, the result follows.
\end{proof}

\begin{lemma}\label{anti}
$\Omega_{d}$ is a $k$--antidesign.
\end{lemma}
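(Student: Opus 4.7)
The plan is to write $\chi_{\Omega_d}$ explicitly as an $\R$-linear combination of characteristic vectors of the form $\chi_{\cS_\sigma}$, where each $\sigma$ is a $(k-1)$- or $(k-2)$-space of $\cP_{d,e}$ and $\cS_\sigma$ denotes the set of generators of $\cP_{d,e}$ through $\sigma$. Since $V_0\perp\cdots\perp V_k=Im(C_k^t)$ is a linear subspace and Lemma~\ref{cone1} places every such $\chi_{\cS_\sigma}$ inside it, the resulting combination will lie in $V_0\perp\cdots\perp V_k$, which by definition says that $\Omega_d$ is a $k$-antidesign.

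First I would show that every generator $g\in\Omega_d$ contains the vertex $V$ of $\cK$. The point is that $V$ is contained in the radical of the form of $\cP_{d,e}$ restricted to the projective span of $\cK$: every point of $\cK$ is perpendicular, with respect to the form of $\cP_{d,e}$, to every point of $V$. Consequently any totally isotropic/singular subspace $g$ contained in $\cK$ can be enlarged by $V$, and maximality of $g$ as a generator forces $V\subset g$. Quotienting by $V$ then identifies the generators of $\cP_{d,e}$ through $V$ with the generators of the rank-$(d-k+1)$ polar space $V^\perp/V$, with $g\in\Omega_d$ corresponding exactly to the condition $g/V\subset\cB$.

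Now for every point $p$ of $\cB$, the subspace $V+p$ is a $(k-1)$-space of $\cP_{d,e}$ contained in $\cK$ (isotropy follows since $\cB\subset V^\perp$). Let $\cS_{V+p}$ denote the set of generators through $V+p$ and, for $k\ge 2$, let $\cS_V$ denote the set of generators through $V$ (for $k=1$ the vertex is empty and we set $\cS_V=\cM_{\cP_{d,e}}$, whose characteristic vector is trivially in $V_0$). Lemma~\ref{cone1} gives that $\cS_{V+p}$ is a $k$-antidesign and $\cS_V$ is a $(k-1)$-antidesign, hence also a $k$-antidesign. I would then evaluate
\[
T\;=\;\sum_{p\ \text{point of}\ \cB}\chi_{\cS_{V+p}}
\]
at an arbitrary generator $g$: if $V\not\subset g$ then $T(g)=0$, while if $V\subset g$ then $g/V$ is a generator of $V^\perp/V$ and $T(g)=|(g/V)\cap\cB|$. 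In each of the three cases listed, $\cB$ is a non-tangent hyperplane section of the base of $V^\perp\cap\cP_{d,e}$---namely $\cQ^+(2d-2k+1,q)\subset\cQ(2d-2k+2,q)$ for $\cP_{d,e}=\cQ(2d,q)$, $\cQ(2d-2k+2,q)\subset\cQ^-(2d-2k+3,q)$ for $\cQ^-(2d+1,q)$, and $\cH(2d-2k+1,q)\subset\cH(2d-2k+2,q)$ for $\cH(2d,q)$---so this intersection is either all of $g/V$ when $g\in\Omega_d$ or a hyperplane of $g/V$ otherwise, yielding the values $(q^{d-k+1}-1)/(q-1)$ and $(q^{d-k}-1)/(q-1)$ respectively.

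Subtracting the appropriate multiple of $\chi_{\cS_V}$ from $T$ produces the identity
\[
q^{d-k}\,\chi_{\Omega_d}\;=\;T\;-\;\frac{q^{d-k}-1}{q-1}\,\chi_{\cS_V},
\]
which exhibits $\chi_{\Omega_d}$ as an $\R$-linear combination of $k$-antidesigns, completing the proof. The main obstacle is the uniform verification---across the three admissible types of polar space---that $\cB$ really is a hyperplane section of the base of $V^\perp\cap\cP_{d,e}$, since this is what guarantees the two point counts above and hence the identity in every case.
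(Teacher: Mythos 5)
Your proof is correct, and it takes a genuinely different route from the paper's. The paper proceeds by induction on the vertex dimension: the case $k=1$ is the direct computation of $C_1^t\chi_{\Gamma_1}$, and the inductive step embeds $\cK$ into a family of larger sections $\Pi_r\cap\cP_{d,e}$, controls the vectors $\chi_{\cG_{\Pi_r}}$ via Lemma~\ref{cone1} or the induction hypothesis, and then unwinds the recursions \eqref{recursive1}--\eqref{recursive2} to isolate $\chi_{\Omega_d}$ inside $Im(C_k^t)$ starting from \eqref{case1}--\eqref{case2}. You instead establish the single closed identity
\[
q^{d-k}\,\chi_{\Omega_d}\;=\;\sum_{p\in\cB}\chi_{\cS_{\langle V,p\rangle}}\;-\;\frac{q^{d-k}-1}{q-1}\,\chi_{\cS_V},
\]
checked generator by generator in the quotient $V^\perp/V$ (a polar space of rank $d-k+1$ in which $\langle\cK\rangle/V$ is a hyperplane), and then apply Lemma~\ref{cone1} only for $r=k$ and $r=k-1$. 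Your two point counts are exactly the dichotomy ``$g/V\subseteq\langle\cK\rangle/V$'' versus ``$g/V\cap\langle\cK\rangle/V$ is a hyperplane of $g/V$''; in particular the non-tangency of the hyperplane section that you single out as the main obstacle is not actually needed for the count --- only that $\langle\cK\rangle/V$ is a hyperplane of $V^\perp/V$, which is immediate from the dimensions, together with the observation that $g/V\subseteq\langle\cK\rangle/V$ if and only if $g\in\Omega_d$. Your argument is shorter, avoids the induction and the bookkeeping with the sets $\Omega_{d-r}$, exhibits $\chi_{\Omega_d}\in Im(C_k^t)$ with explicit coefficients, and for $k=1$ specializes to precisely the paper's base-case computation; the only prerequisites worth spelling out are the two you already flag, namely that every generator contained in $\cK$ contains the vertex (so $\Omega_d\subseteq\cS_V$ and the evaluation at generators not containing $V$ gives $0=0$), and the separate treatment of $\chi_{\cS_V}$ when $k=1$.
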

\begin{proof}
Recall that $\cP_{d, e}$ is one of the following polar spaces: $\cQ(2d, q)$, $\cQ^-(2d+1, q)$, $\cH(2d, q)$, $d \ge 2$. First we see that the statement is true if $k = 1$. Indeed
$$
C_1^t \chi_{\Gamma_{1}} = \genfrac{[}{]}{0pt}{}{d}{1}_q \chi_{\Omega_{d}} + \genfrac{[}{]}{0pt}{}{d-1}{1}_q \chi_{\Omega_{d-1}} = \genfrac{[}{]}{0pt}{}{d}{1}_q \chi_{\Omega_{d}} + \genfrac{[}{]}{0pt}{}{d-1}{1}_q (\chi_{\cM_{\cP_{d, e}}} - \chi_{\Omega_{d}})
$$
and $C_1^t \j = \genfrac{[}{]}{0pt}{}{d}{1}_q \chi_{\cM_{\cP_{d, e}}}$. Hence $\chi_{\Omega_{d}} \in Im(C_1^t) = V_0 \perp V_1$. See also \cite[Theorem 5.1]{Van2}.

Assume that $\chi_{\Omega_{d}} \in Im(C_s^t) = V_0 \perp \dots \perp V_s$, whenever $\cK_s$ is a cone having as vertex an $(s-2)$--space of $\cP_{d, e}$, and as base a $\cQ^+(2d-2s+1, q)$, $\cQ(2d-2s+2, q)$, $\cH(2d-2s+1, q)$, respectively, $1 < s \le k-1$. Let $\Pi_r$ denote a subspace of the ambient space of projective dimension $2d-k+r$ if $\cP_{d, e} \in \{\cQ(2d, q), \cH(2d, q)\}$ or of projective dimension $2d-k+r+1$ if $\cP_{d, e} = \cQ^-(2d+1, q)$ and let $\cG_{\Pi_{r}}$ be the set of generators of $\cP_{d, e}$ contained in $\Pi_r$. If for some fixed $r$, with $1 \le r \le k-1$, we have that $\cK \subset \Pi_r$, then two possibilities occur: either $\Pi_r \cap \cP_{d, e}$ is a cone having as vertex a $(k - r - 1)$--space of $\cP_{d, e}$ and as base a $\cQ(2d - 2k + 2r, q)$, $\cQ^-(2d - 2k + 2r +1, q)$, $\cH(2d - 2k + 2r, q)$, respectively, or $\Pi_r \cap \cP_{d, e}$ is a cone having as vertex a $(k - r - 2)$--space of $\cP_{d, e}$ and as base a $\cQ^+(2d - 2k + 2r+1, q)$, $\cQ(2d-2k + 2r + 2, q)$, $\cH(2d - 2k + 2r + 1, q)$, respectively. By using Lemma~\ref{cone1} in the former case and our previous assumptions in the latter case, we have 
\begin{equation}\label{base}
\chi_{\cG_{\Pi_r}} \in Im(C_{k - r}^t) = V_0 \perp \dots \perp V_{k - r}.
\end{equation}
Also, observe that
\begin{equation} \label{case1}
C_k^t \chi_{\Gamma_k}= \genfrac{[}{]}{0pt}{}{d}{k}_q \chi_{\Omega_{d}} + \genfrac{[}{]}{0pt}{}{d-1}{k}_q \chi_{\Omega_{d-1}} + \dots + \genfrac{[}{]}{0pt}{}{d-k}{k}_q \chi_{\Omega_{d-k}}
\end{equation}
if $k \le \lfloor d/2 \rfloor$ and
\begin{align}
C_k^t \chi_{\Gamma_k} & = \genfrac{[}{]}{0pt}{}{d}{k}_q \chi_{\Omega_{d}} + \genfrac{[}{]}{0pt}{}{d-1}{k}_q \chi_{\Omega_{d-1}} + \dots + \genfrac{[}{]}{0pt}{}{k}{k}_q \chi_{\Omega_{k}} {\nonumber} \\ & = \genfrac{[}{]}{0pt}{}{d}{k}_q \chi_{\Omega_{d}} + \genfrac{[}{]}{0pt}{}{d-1}{k}_q \chi_{\Omega_{d-1}} + \dots + \genfrac{[}{]}{0pt}{}{d-(k-s')}{k}_q \chi_{\Omega_{d-(k-s')}} \label{case2}
\end{align}
if $k > \lfloor d/2 \rfloor$, with $k = d- (k-s')$, for some $s'$, where $1 \le s' \le k-1$.


Moreover
\begin{equation}\label{recursive1}
\chi_{\Omega_{d-r}} = \sum_{\cK \subset \Pi_r} \chi_{\cG_{\Pi_r}} - \sum_{i = 1}^r \genfrac{[}{]}{0pt}{}{k - r + i}{i}_q \chi_{\Omega_{d-r+i}}
\end{equation}
for $1 \le r \le k-1$ and
\begin{equation}\label{recursive2}
\chi_{\Omega_{d - k}} = \chi_{\cM_{\cP_{d, e}}} - \chi_{\Omega_d} - \chi_{\Omega_{d-1}} \dots - \chi_{\Omega_{d-k+1}}.
\end{equation}
Taking into account \eqref{recursive2} and applying recursively \eqref{recursive1}, we obtain that the hand--right side of Equation \eqref{case1} can be written as a linear combination of $\chi_{\Omega_{d}}, \chi_{\cM_{\cP_{d, e}}}, \chi_{\cG_{\Pi_1}}, \dots, \chi_{\cG_{\Pi_{k-1}}}$. Similarly, Equation \eqref{case2} can be written as a linear combination of $\chi_{\Omega_{d}}, \chi_{\cG_{\Pi_1}}, \dots, \chi_{\cG_{\Pi_{k-s'}}}$. The assertion follows from \eqref{base}.
\end{proof}



The next result shows that, by means of Lemma \ref{anti}, it is possible to construct $m'$--regular systems starting from a given $m$--regular system.

\begin{theorem}\label{chain}~
\begin{itemize}
\item[1)] If $\cQ^-(2d + 1, q)$ has an $m$--regular system w.r.t. $(k-1)$--spaces, then $\cQ(2d+2, q)$ has an $m(q+1)$--regular system w.r.t $(k-1)$--spaces and if $k \ge 2$, then $\cQ(2d, q)$ has an $m(q^{e-1}+1)$--regular system w.r.t. $(k-2)$--spaces.
\item[2)] If $\cQ(2d, q)$ has an $m$--regular system w.r.t. $(k-1)$--spaces, then $\cQ^+(2d+1, q)$ has a $2m$--regular system w.r.t. $(k-1)$--spaces and if $k \ge 2$, then $\cQ^+(2d-1, q)$ has an $m(q^{e-1}+1)$--regular system w.r.t. $(k-2)$--spaces.
\item[3)] If $\cH(2d, q)$ has an $m$--regular system w.r.t. $(k-1)$--spaces, then $\cH(2d+1, q)$ has an $m(q+1)$--regular system w.r.t. $(k-1)$--spaces and if $k \ge 2$, then $\cH(2d-1, q)$ has an $m(q^{e-1}+1)$--regular system w.r.t. $(k-2)$--spaces.
\end{itemize}
\end{theorem}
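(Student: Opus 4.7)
The plan is to unify the three parts by common arguments: each ``second'' statement is proved by restricting $\cR$ to the polar subspace and applying Lemma~\ref{anti} directly, while each ``first'' statement is proved by lifting $\cR$ to generators of the larger polar space and reducing the non-trivial case to design-orthogonality in a residue. Throughout I use Proposition~\ref{design} to convert between ``$m$-regular system'' and ``$k$-design'', and the relation~\eqref{orthogonal} to extract intersection sizes.

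For the second statements, view the smaller polar space as $\cP_0=H\cap\cP$ for a suitable hyperplane $H$, and set $\cR_0=\cR\cap\cM_{\cP_0}$. For a $(k-2)$-space $\sigma$ of $\cP_0$, the cone $\cK_\sigma=\sigma^{\perp}\cap H\cap\cP$ has vertex $\sigma$ and base equal to the residue of $\cP_0$ at $\sigma$, which is precisely the base prescribed by Lemma~\ref{anti} for $\cP$ (namely $\cQ(2d-2k+2,q)$, $\cQ^+(2d-2k+1,q)$, or $\cH(2d-2k+1,q)$ in the three parts). Thus the set $\Omega_d$ of generators of $\cP$ contained in $\cK_\sigma$ is a $k$-antidesign by Lemma~\ref{anti}, and it coincides with the set of generators of $\cP_0$ through $\sigma$. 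Since $\cR$ is a $k$-design, \eqref{orthogonal} yields $|\cR\cap\Omega_d|=|\cR||\Omega_d|/|\cM_\cP|$, and a telescoping cancellation of factors $q^j+1$ reduces the right-hand side to $m(q^{e-1}+1)$, as required.

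For the first statements, view the larger polar space as $\cP'\supset\cP=H\cap\cP'$, and set $\cR'=\{g'\in\cM_{\cP'}:g'\cap H\in\cR\}$. Each $g\in\cR$ extends to exactly $N$ generators of $\cP'$, where $N$ equals the number of points of the rank-$1$ residue of $\cP'$ at $g$; hence $|\cR'|=N|\cR|$ and the target multiplicity is $Nm$. Fix a $(k-1)$-space $\pi$ of $\cP'$. If $\pi\subseteq H$, then $\pi$ is a $(k-1)$-space of $\cP$ contained in $m$ generators of $\cR$, each producing $N$ extensions in $\cR'$, giving count $Nm$. If $\pi\not\subseteq H$, set $\pi'=\pi\cap H$, a $(k-2)$-space of $\cP$; the map $g'\mapsto g'\cap H$ is a bijection from generators of $\cP'$ through $\pi$ onto generators $g$ of $\cP$ through $\pi'$ with $\langle g,\pi\rangle$ totally singular, equivalently $g\subseteq\pi^\perp$. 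Passing to the residue of $\cP$ at $\pi'$, the image $\bar\cR$ of $\{g\in\cR:g\supseteq\pi'\}$ is a $1$-design (being an $m$-regular system with respect to points), and $g\subseteq\pi^\perp$ becomes $\bar g\subseteq K$ for the hyperplane $K=(\pi/\pi')^{\perp}\cap H^{\mathrm{res}}$ of the residue ambient. Lemma~\ref{anti} applied in the residue of $\cP$ at $\pi'$ (with empty vertex and hyperplane-section base) shows that the set of generators contained in $K$ is a $1$-antidesign whose cardinality equals that of the residue of $\cP'$ at $\pi$ by the bijection above; \eqref{orthogonal} then produces count $Nm$, matching Case~1.

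The main obstacle is to ensure that $K$ cuts the residue of $\cP$ at $\pi'$ non-degenerately, so that Lemma~\ref{anti} applies with the prescribed base. The pole of $K$ in $H^{\mathrm{res}}$ with respect to the residue of $\cP$ at $\pi'$ is the projection of $\pi/\pi'$ from the pole of $H^{\mathrm{res}}$ in the residue of $\cP'$ at $\pi'$, and a short bilinear-form computation shows in odd characteristic that this projection does not lie on the residue of $\cP$. Should a tangent configuration nevertheless arise, Lemma~\ref{cone1} still makes the set of generators in $K$ a $1$-antidesign, and the forced cardinality coming from the bijection preserves the count, so the uniform answer $Nm$ is obtained in all cases.
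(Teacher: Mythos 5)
Your proposal is correct and follows essentially the same route as the paper: lift $\cR$ to the set of generators of the larger polar space containing a member of $\cR$, split on whether the $(k-1)$--space $\pi$ lies in the hyperplane, settle the mixed case by intersecting a design with the antidesign of Lemma~\ref{anti} via~\eqref{orthogonal}, and obtain the second statements by restriction to an embedded $\cP_{d,e-1}$. The only real difference is that in the mixed case you transport the orthogonality computation to the residue at $\pi\cap H$ (so that only the $k=1$ instance of Lemma~\ref{anti} is needed there, the paper instead applying it with a $(k-2)$--dimensional vertex directly in $\cP$), and your worry about a degenerate section is unfounded: the map $g'\mapsto g'\cap H$ identifies $K$ modulo $\pi\cap H$ with the residue of $\cP'$ at $\pi$, which is automatically non--degenerate of the prescribed type, so the tangent fallback --- which as written would actually return count $m$ rather than $Nm$ --- is never invoked.
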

\begin{proof}
Let $\cP_{d, e}$ be one of the following polar spaces: $\cQ(2d, q)$, $\cQ^-(2d+1, q)$, $\cH(2d, q)$, $d \ge 2$, and let $\cR$ be an $m$--regular system of $\cP_{d, e}$ w.r.t. $(k-1)$--spaces. Embed $\cP_{d, e}$ in $\cP_{d+1, e-1}'$, where $\cP_{d+1, e-1}'$ is $\cQ^+(2d+1, q)$, $\cQ(2d+2, q)$, $\cH(2d+1, q)$, respectively. Notice that a generator of $\cP_{d+1, e-1}'$ contains exactly one generator of $\cP_{d, e}$. On the other hand, through a generator of $\cP_{d, e}$, there pass $q^{e-1}+1$ generators of $\cP_{d+1, e-1}'$. Let $\cR'$ be the set of generators of $\cP_{d+1, e-1}'$ containing a member of $\cR$. Let $\sigma$ be a $(k-1)$--space of $\cP_{d+1, e-1}'$ and let $\cC$ be the set of points of $\cP_{d+1, e-1}'$ belonging to at least a generator of $\cP_{d+1, e-1}'$ through $\sigma$. If $\sigma$ is contained in $\cP_{d, e}$, then through $\sigma$ there pass $m (q^{e-1}+1)$ elements of $\cR'$. If $\sigma$ meets $\cP_{d, e}$ in a $(k-2)$--space, then $\cK = \cC \cap \cP_{d, e}$ is a cone having as vertex $\sigma \cap \cP_{d, e}$ and as base a $\cQ^+(2d-2k+1, q)$, $\cQ(2d-2k+2, q)$, $\cH(2d-2k+1, q)$, respectively. Let $\Omega_d$ be the set of generators of $\cP_{d, e}$  contained in $\cK$. From Proposition~\ref{design}, Lemma~\ref{anti} and Equation \eqref{orthogonal}, we have 
$$
| \cR \cap \Omega_d | = \frac{m \prod_{i=1}^{k} (q^{d+e-i} + 1) \prod_{i=1}^{d-k+1} (q^{(d-k+1) + (e-1) -i} + 1)}{\prod_{i = 1}^{d} (q^{d+e-i} + 1)} = m (q^{e-1}+1).
$$
Hence, through $\sigma$ there pass $m(q^{e-1}+1)$ generators of $\cR'$. Let $\cR''$ be the set of generators of $\cR$ contained in a polar space $\cP_{d, e-1}''$ embedded in $\cP_{d, e}$, where $\cP_{d, e-1}''$ is $\cQ^+(2d-1, q)$, $\cQ(2d, q)$, $\cH(2d-1, q)$, respectively. We have just seen that through a $(k-2)$--space of $\cP_{d, e-1}''$ there pass $m(q^{e-1}+1)$ members of $\cR''$. Therefore $\cR''$ is an $m(q^{e-1}+1)$--regular system of $\cP_{d, e-1}''$ w.r.t. $(k-2)$--spaces.
\end{proof}

\begin{remark}
If $q$ is even, then the polar space $\cQ(2d, q)$, $d \ge 2$, has an $m$--regular system if and only if the polar space $\cW(2d-1, q)$ has an $m$--regular system. Indeed, let $N$ be the nucleus of $\cQ(2d, q)$ and let $\Pi$ be a hyperplane of $\PG(2d, q)$ not containing $N$. The projections from $N$ onto $\Pi$ of the totally singular $s$--dimensional subspaces of $\cQ(2d, q)$ are the totally isotropic $s$--dimensional subspaces of a polar space $\cW(2d-1, q)$ in $\Pi$. Hence an $m$--regular system of $\cQ(2d, q)$ is projected onto an $m$--regular system of $\cW(2d-1, q)$, and, conversely, any $m$--regular system of $\cW(2d-1, q)$ is the projection of an $m$--regular system of $\cQ(2d, q)$.
\end{remark}

\begin{cor}
\begin{itemize}
\item[1)] If $\cQ^-(2d + 1, q)$ has an $m$--regular system w.r.t. $(k-1)$--spaces, then $\cQ^+(2d+3,q)$ has a $2m(q+1)$--regular system w.r.t. $(k-1)$--spaces.
\item[2)] If $\cQ^-(2d + 1, q)$, $q$ even, has an $m$--regular system w.r.t. $(k-1)$--spaces, then $\cW(2d + 1, q)$ has an $m(q+1)$--regular system w.r.t. $(k-1)$--spaces.
\end{itemize}
\end{cor}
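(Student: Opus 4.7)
The plan is to obtain both statements as direct consequences of Theorem~\ref{chain}, by iterating/combining it with the preceding Remark. No new geometric or algebraic ingredient should be needed; it is purely a composition argument, so I expect no genuine obstacle beyond bookkeeping the rank shifts.

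For part 1), I would start from the given $m$-regular system $\cR$ of $\cQ^-(2d+1,q)$ w.r.t. $(k-1)$-spaces. Applying Theorem~\ref{chain}(1) to $\cR$ (the embedding of $\cQ^-(2d+1,q)$ into $\cQ(2d+2,q)$), I obtain an $m(q+1)$-regular system of $\cQ(2d+2,q)$ w.r.t. $(k-1)$-spaces. Now $\cQ(2d+2,q)$ is a parabolic polar space of rank $d+1$, so Theorem~\ref{chain}(2), applied with $d$ replaced by $d+1$, upgrades this to a $2\cdot m(q+1) = 2m(q+1)$-regular system of $\cQ^+(2(d+1)+1,q) = \cQ^+(2d+3,q)$ w.r.t. $(k-1)$-spaces, as required. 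The only thing to check is that the parameter $k$ is left unchanged through both applications, which is precisely the first alternative in each of the two clauses of Theorem~\ref{chain} (no transition to $(k-2)$-spaces is invoked).

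For part 2), again apply Theorem~\ref{chain}(1) to the given $m$-regular system of $\cQ^-(2d+1,q)$ to get an $m(q+1)$-regular system of $\cQ(2d+2,q)$ w.r.t. $(k-1)$-spaces. Since $q$ is even, the Remark preceding the Corollary asserts that $\cQ(2(d+1),q)$ and $\cW(2(d+1)-1,q) = \cW(2d+1,q)$ carry the same $m$-regular systems: the projection from the nucleus of $\cQ(2d+2,q)$ onto a hyperplane not containing it is a bijection between the totally singular subspaces of $\cQ(2d+2,q)$ and the totally isotropic subspaces of $\cW(2d+1,q)$ of each dimension, and in particular preserves incidence between $(k-1)$-spaces and generators. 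Transporting the $m(q+1)$-regular system along this bijection yields the desired $m(q+1)$-regular system of $\cW(2d+1,q)$ w.r.t. $(k-1)$-spaces, completing the proof.
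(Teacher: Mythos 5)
Your proposal is correct and matches the paper's intent exactly: the paper states this corollary without proof, as an immediate consequence of Theorem~\ref{chain} (part (1) followed by part (2) with $d$ shifted to $d+1$ for the hyperbolic case) together with the preceding Remark on the nucleus projection for $q$ even. Your parameter bookkeeping (the factor $q^{e-1}+1=q+1$ for the elliptic quadric, the factor $2$ for the parabolic one, and the unchanged $k$) is accurate.
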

%

The generators of $\cQ^+(2d-1, q)$ are $(d-1)$--spaces and the set of all generators is partitioned into two subsets of the same size, called {\em Greek} and {\em Latin generators} and denoted by $\cM_1$ and $\cM_2$, respectively. It is straightforward to check that $\cM_i$ is a hemisystem of $\cQ^+(2d-1, q)$ w.r.t. $(k-1)$--spaces, $1 \le k \le d-1$. More interestingly it is possible to get regular systems by a switching procedure as described in the following proposition.

\begin{prop}
Let $\sigma$ be an $(s-1)$--space of $\cQ^+(2d-1, q)$, for a fixed $s$, with $1 \le s \le d-2$, and let $\cZ_i$ be the set of generators of $\cM_i$ containing $\sigma$, $i\in\{1,2\}$. Then $(\cM_i \setminus \cZ_i) \cup \cZ_j$, with $i\ne j$, is a hemisystem of $\cQ^+(2d-1, q)$ w.r.t. $(d-s-2)$--spaces.
\end{prop}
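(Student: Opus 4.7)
The plan is to show that $\cR := (\cM_i \setminus \cZ_i) \cup \cZ_j$ meets the set of generators through each fixed $(d-s-2)$-space $\tau$ in exactly the same number as $\cM_i$ does. Since $\cM_i$ is itself a hemisystem w.r.t.\ $(d-s-2)$-spaces (as is any full family of a hyperbolic quadric, by a standard calculation of the number of generators through a fixed totally singular subspace), this will immediately give the conclusion, with $|\cR| = |\cM_i|$ following from $|\cZ_i| = |\cZ_j|$.

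Fix a $(d-s-2)$-space $\tau$ and let $X_\tau$ denote the set of generators of $\cQ^+(2d-1,q)$ containing $\tau$. By inclusion--exclusion,
\[
|\cR \cap X_\tau| \;=\; |\cM_i \cap X_\tau| - |\cZ_i \cap X_\tau| + |\cZ_j \cap X_\tau|,
\]
so everything reduces to proving $|\cZ_i \cap X_\tau| = |\cZ_j \cap X_\tau|$. A generator in $\cZ_i \cap X_\tau$ (resp.\ in $\cZ_j \cap X_\tau$) is a Greek (resp.\ Latin) generator containing both $\sigma$ and $\tau$, hence containing $\langle \sigma,\tau\rangle$. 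If $\langle \sigma,\tau\rangle$ is not totally singular, both counts are $0$ and we are done for that $\tau$.

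Suppose $\langle \sigma,\tau\rangle$ is totally singular. The hypothesis $1 \le s \le d-2$ forces
\[
\dim \langle \sigma,\tau\rangle \;\le\; (s-1)+(d-s-2)+1 \;=\; d-2,
\]
so $\langle \sigma,\tau\rangle$ is itself \emph{not} a generator. Passing to the quotient by $\langle \sigma,\tau\rangle$, the generators of $\cQ^+(2d-1,q)$ through $\langle \sigma,\tau\rangle$ correspond bijectively to the generators of a hyperbolic quadric of rank $\ge 1$, and this bijection respects the Greek/Latin partition (because the parity of $(d-1)-\dim(G\cap G')$, which distinguishes the two families, is preserved when the ambient rank and both intersection dimensions drop by the same amount). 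Since the two families of any hyperbolic quadric have the same cardinality, $|\cZ_i \cap X_\tau| = |\cZ_j \cap X_\tau|$, as required.

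The only point requiring care is the dimension estimate that prevents $\langle \sigma,\tau\rangle$ from being a generator; if that failed, the ``quotient quadric'' would collapse to a single point belonging to only one family, breaking the symmetry. The hypothesis $s\le d-2$ is precisely what excludes this degenerate case.
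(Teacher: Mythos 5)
Your proof is correct and follows essentially the same route as the paper's: the paper splits into the cases $\rho\not\subseteq\sigma^{\perp}$ and $\rho\subseteq\sigma^{\perp}$ (equivalent to your dichotomy on whether $\langle\sigma,\tau\rangle$ is totally singular) and, in the latter case, identifies the generators through $\langle\rho,\sigma\rangle$ with those of a base quadric $\cQ^{+}(2t+1,q)$ of the cone $\langle\rho,\sigma\rangle^{\perp}\cap\cQ^{+}(2d-1,q)$, which is exactly your quotient argument. The only difference is cosmetic: you spell out, via the parity of $(d-1)-\dim(G\cap G')$, why the correspondence respects the Greek/Latin split, a point the paper asserts without justification.
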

\begin{proof}
Without loss of generality, we can assume that $i=1$ and $j=2$. Since $|\cZ_1| = |\cZ_2|$, $\cZ_1 \subseteq \cM_1$ and $|\cZ_2 \cap \cM_1| = 0$, we have that $|(\cM_1 \setminus \cZ_1) \cup \cZ_2| = |\cM_1| = |\cM_2|$. Let $\sigma$ be an $(s-1)$--space of $\cQ^+(2d - 1, q)$ and let $\perp$ be the orthogonal polarity induced by the quadric. The projective subspace $\sigma^\perp$ meets the quadric $\cQ^+(2d-1, q)$ in a cone that has as vertex $\sigma$ and as base a $\cQ^+(2d-2s-1, q)$. Let $\rho$ be a $(d-s-2)$--space of $\cQ^+(2d-1, q)$. There are two cases to consider according as $\rho \not\subseteq \sigma^{\perp}$ or $\rho \subseteq \sigma^{\perp}$, respectively. In the former case, there are no generators of $\cZ_1 \cup \cZ_2$ containing $\rho$. Hence $(\cM_1 \setminus \cZ_1) \cup \cZ_2$ and $\cM_1$ have the same number of elements through $\rho$. In the latter case, if $\rho \cap \sigma$ is a $(t-1)$--space, then $\langle \rho, \sigma \rangle$ is a $(d - t - 2)$--space and $\langle \rho, \sigma \rangle^{\perp}$ meets the quadric $\cQ^+(2d-1, q)$ in a cone that has as vertex $\langle \rho, \sigma \rangle$ and as base a $\cQ^+(2t+1, q)$. Hence there is a bijection between the  generators of $\cZ_1 \cup \cZ_2$ passing through $\rho$ and the generators of $\cQ^+(2t+1, q)$; furthermore the members of $\cZ_1$ containing $\rho$ and those of $\cZ_2$ containing $\rho$ are equal in number. Note that the generators of $\cM_1$ passing through $\rho$ and $\sigma$ are elements of $\cZ_1$. Therefore we may conclude that the number of generators of $(\cM_1 \setminus \cZ_1) \cup \cZ_2$ through $\rho$ is the same as the number of generators of $\cM_1$ through $\rho$.
\end{proof}

In \cite{BLL} the authors proved that $\cW(5, q)$, (and hence $\cH(5, q^2)$) $q \in\{3, 5\}$, has not hemisystems w.r.t. lines. Moreover they exhibited some examples of hemisystems w.r.t. lines of $\cQ(6, q)$, $q \in\{3, 5\}$, \cite[Table 1]{BLL}. Both results were found with the aid of a computer. As already mentioned in the introduction, if $k \ge 2$ and the polar space is not a hyperbolic quadric, up to date, these are the only known sporadic examples of regular systems w.r.t. $(k-1)$--spaces. 

In the rest of the section we recall the state of the art about regular systems of polar spaces of rank two and then we apply Theorem \ref{chain} to obtain regular systems in polar spaces of higher rank. The point line dual of $\cW(3,q)$ is $\cQ(4,q)$, the parabolic quadric of $\PG(4,q)$. Therefore $m$--regular systems of $\cQ(4,q)$ and $m$--ovoids of $\cW(3,q)$ are equivalent objects and $m$--regular systems of $\cW(3,q)$ and $m$--ovoids of $\cQ(4,q)$ are equivalent objects. In odd characteristic, currently, $m$--ovoids of $\cQ(4,q)$ are rare. Indeed, infinite families of $m$--ovoids are known to exist only when $m=1$ \cite{PW}, $m = (q+1)/2$ \cite{CP} and $m = (q-1)/2$, \cite{FMX, FT}. See also \cite{BLP, Pa} for few more sporadic examples. As regard as $m$--ovoids of $\cW(3,q)$, then, if $q$ is odd, they exist for all even $m$, see \cite{BLP}.
The generalized quadrangle $\cW(3,q)$ is self--dual if and only if $q$ is even. Hence, when $q$ is even $m$--ovoids and $m$--regular systems of $\cW(3,q)$ (or $\cQ(4,q)$) are equivalent objects. Assume that $q$ is even. In \cite{CCEM}, the authors showed that $m$--ovoids (and hence $m$--regular systems) of $\cW(3,q)$ (and hence of $\cQ(4,q)$) exist for all integers $m$, $1 \le m \le q$.

The point line dual of $\cH(3,q^2)$ is $\cQ^-(5,q)$, the elliptic quadric of $\PG(5,q)$. Therefore $m$--regular systems of $\cQ^-(5,q)$ and $m$--ovoids of $\cH(3,q^2)$ are equivalent objects and $m$--regular systems of $\cH(3,q^2)$ and $m$--ovoids of $\cQ^-(5,q)$ are equivalent objects. It is a well known result that regular systems of $\cH(3,q^2)$ are hemisystems and several examples are known \cite{BGR, BLMX, CPa, CP, KNS}. On the other hand, $m$--ovoids of $\cH(3,q^2)$ exist for all possible $m$. Indeed, $\cH(3,q^2)$ admits a {\em fan}, i.e., a partition of the pointset into ovoids. We briefly mention the construction of a fan of $\cH(3,q^2)$ due to Brouwer and Wilbrink \cite{BW}: let $\perp$ be the unitary polarity of $\PG(3,q^2)$ induced by $\cH(3,q^2)$, let $P$ be a point of $\PG(3,q^2) \setminus \cH(3,q^2)$, let $X$ be a point of $P^\perp \cap \cH(3,q^2)$ and let $t$ be the unique tangent line at $X$ contained in $P^\perp$. Put $\cO_X = P^\perp \cap \cH(3,q^2)$, and $\cO_Y = ((Y^\perp \cap \cH(3,q^2)) \setminus P^\perp) \cup (PY \cap \cH(3,q^2))$ for $Y \in t \setminus \{ X \}$. Then each $\cO_Z$, is an ovoid, and $\{ \cO_Z \;\; | \;\; Z \in t \}$ is a fan. Finally, up to date, no infinite family of $m$--regular systems of $\cH(4,q^2)$ is known, although several examples have been found when $q \in \{2,3,4,5\}$ \cite{BDS}.

Applying Theorem \ref{chain}, we obtain the following results.

\begin{cor}
If $q$ is even, then $\cQ^+(5,q)$ has a $2m$--regular system w.r.t. points for all $m$, $1 \le m \le q+1$. If $q$ is odd, then $\cQ^+(5,q)$ has a $2m$--regular system w.r.t. points for all even $m$, $1 \le m \le q+1$.
\end{cor}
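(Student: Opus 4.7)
The plan is to obtain these regular systems as a direct application of Theorem~\ref{chain}, part 2), to the base polar space $\cQ(4,q) = \cQ(2d,q)$ with $d=2$. With this choice, $(k-1)$-spaces for $k=1$ are points, and the theorem states that any $m$-regular system of $\cQ(4,q)$ w.r.t. points lifts to a $2m$-regular system of $\cQ^+(5,q) = \cQ^+(2d+1,q)$ w.r.t. points. Thus the corollary will follow once I produce, for the claimed values of $m$, an $m$-regular system of $\cQ(4,q)$ w.r.t. points.

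To produce these, I would invoke the duality already recalled in the paragraph preceding the corollary: the point-line dual of $\cW(3,q)$ is $\cQ(4,q)$, so $m$-regular systems of $\cQ(4,q)$ w.r.t. points correspond bijectively to $m$-ovoids of $\cW(3,q)$. Hence everything reduces to quoting the known existence results for $m$-ovoids of $\cW(3,q)$ stated in the previous paragraphs.

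For $q$ even, \cite{CCEM} guarantees an $m$-ovoid of $\cW(3,q)$ for every integer $m$ with $1 \le m \le q$; adjoining the trivial full set ($m = q+1$) covers the remaining value, so $\cQ(4,q)$ has an $m$-regular system w.r.t. points for every $1 \le m \le q+1$. For $q$ odd, \cite{BLP} yields an $m$-ovoid of $\cW(3,q)$ for every even $m$, and dualising gives the corresponding $m$-regular systems of $\cQ(4,q)$. Applying Theorem~\ref{chain} in each case then produces the claimed $2m$-regular systems of $\cQ^+(5,q)$.

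There is essentially no obstacle to this proof: it is a bookkeeping argument. The only care required is to align the indices correctly (namely $d=2$, $k=1$, $e=1$, so that the multiplier $2$ of Theorem~\ref{chain} part 2) applies, and the range $1 \le m \le q+1$ matches the number $q+1$ of generators of $\cQ(4,q)$ through a point) and to make sure the trivial case $m=q+1$ is included to get the full stated range when $q$ is even.
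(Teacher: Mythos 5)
Your argument is exactly the one the paper intends: the corollary is stated immediately after the sentence ``Applying Theorem \ref{chain}, we obtain the following results,'' and the inputs are precisely the dualised $m$-ovoids of $\cW(3,q)$ from \cite{CCEM} (for $q$ even) and \cite{BLP} (for $q$ odd), fed into part 2) of Theorem \ref{chain} with $d=2$, $k=1$, $e=1$. Your index bookkeeping and the remark about adjoining the trivial $m=q+1$ case are both correct, so the proposal matches the paper's proof.
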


\begin{cor}
$\cQ(6,q)$ has an $m(q+1)$--regular system w.r.t. points for all $m$, $1 \le m \le q^2+1$.
\end{cor}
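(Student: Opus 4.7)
The strategy is to invoke Theorem \ref{chain}, part 1, with $d=2$ and $k=1$: it says that any $m$--regular system w.r.t.\ points of $\cQ^-(5,q)$ yields an $m(q+1)$--regular system w.r.t.\ points of $\cQ(6,q)$. So it suffices to exhibit, for every $m$ with $1\le m\le q^2+1$, an $m$--regular system w.r.t.\ points of $\cQ^-(5,q)$.

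Next I would invoke duality. As recalled in the paragraph preceding the corollary, the point--line dual of $\cH(3,q^2)$ is $\cQ^-(5,q)$. Since the generators of $\cQ^-(5,q)$ are lines (its rank is $2$), a $1$--regular system w.r.t.\ points of $\cQ^-(5,q)$ is a set of lines covering each point a prescribed number of times. Translating via duality, $m$--regular systems w.r.t.\ points of $\cQ^-(5,q)$ correspond exactly to $m$--ovoids of $\cH(3,q^2)$, i.e.\ point sets meeting every line of $\cH(3,q^2)$ in exactly $m$ points.

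Finally, I would use the Brouwer--Wilbrink construction explicitly described in the excerpt. That construction gives a \emph{fan} $\{\cO_Z \mid Z\in t\}$ of $\cH(3,q^2)$, i.e.\ a partition of the point set of $\cH(3,q^2)$ into $q^2+1$ pairwise disjoint ovoids. Since every ovoid of $\cH(3,q^2)$ meets every line of $\cH(3,q^2)$ in exactly one point, the union of any $m$ members of the fan is an $m$--ovoid of $\cH(3,q^2)$, and this can be done for every integer $m$ with $1\le m\le q^2+1$. Dualising and then applying Theorem \ref{chain}(1) completes the proof.

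There is no serious obstacle here: the corollary is a packaging of three ingredients already at hand in the excerpt (duality between $\cH(3,q^2)$ and $\cQ^-(5,q)$, the existence of a fan of $\cH(3,q^2)$, and Theorem \ref{chain}). The only point worth verifying carefully is the parameter bookkeeping, namely that $m$--ovoids of $\cH(3,q^2)$ correspond via duality to $m$--regular systems w.r.t.\ points of $\cQ^-(5,q)$ and that Theorem \ref{chain}(1) applied with $d=2$, $k=1$ indeed produces an $m(q+1)$--regular system of $\cQ(6,q)$ of the claimed size $m(q+1)(q^3+1)$.
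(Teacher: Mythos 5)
Your proposal is correct and follows exactly the route the paper intends: the corollary is stated immediately after the paragraph recalling the duality between $m$--ovoids of $\cH(3,q^2)$ and $m$--regular systems w.r.t.\ points of $\cQ^-(5,q)$ and the Brouwer--Wilbrink fan (which gives $m$--ovoids for all $1\le m\le q^2+1$), and the paper then simply applies Theorem \ref{chain}(1) with $d=2$, $k=1$. Your parameter bookkeeping (the fan has $q^2+1$ ovoids, and the resulting system of $\cQ(6,q)$ has size $m(q+1)(q^3+1)$) is also consistent with Lemma \ref{properties}(i).
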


\begin{cor}
$\cQ^+(7,q)$ has a $2m(q+1)$--regular system w.r.t. points for all $m$, $1 \le m \le q^2+1$.
\end{cor}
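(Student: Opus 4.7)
The plan is to derive this statement as an immediate consequence of the previous corollary and part $2)$ of Theorem~\ref{chain}. I would start by invoking the previous corollary with the same value of $m$, in the range $1 \le m \le q^2+1$, to obtain an $m(q+1)$--regular system $\cR$ of $\cQ(6,q)$ with respect to points.

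Next I would apply Theorem~\ref{chain}$(2)$ with $d=3$ and $k=1$, embedding $\cQ(6,q)$ as a non--singular parabolic hyperplane section of $\cQ^+(7,q)$. Since $e=1$ for $\cQ(6,q)$, the factor $q^{e-1}+1$ appearing in Theorem~\ref{chain} equals $2$, corresponding to the geometric fact that every generator (plane) of $\cQ(6,q)$ is contained in exactly two generators (solids) of $\cQ^+(7,q)$. Taking $\cR'$ to be the set of solids of $\cQ^+(7,q)$ containing some plane of $\cR$, Theorem~\ref{chain}$(2)$ then asserts that $\cR'$ is a $2 \cdot m(q+1) = 2m(q+1)$--regular system of $\cQ^+(7,q)$ with respect to points.

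The admissible range $1 \le m \le q^2+1$ is inherited verbatim from the previous corollary and is preserved by the construction, since the lifting procedure in Theorem~\ref{chain} imposes no restriction on which $m(q+1)$--regular systems of $\cQ(6,q)$ can be used as input. There is no substantive obstacle here: the statement is a one--step composition of two earlier results, and the only verification needed is that the hypotheses of Theorem~\ref{chain}$(2)$ hold with $d=3$, $k=1$, which is immediate.
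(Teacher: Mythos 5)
Your proposal is correct and is exactly the paper's (implicit) argument: the paper derives this corollary by feeding the preceding corollary on $\cQ(6,q)$ into Theorem~\ref{chain}(2) with $d=3$, $k=1$, where the doubling factor $2 = q^{e-1}+1$ (with $e=1$ for the parabolic quadric) reflects that each generator of $\cQ(6,q)$ lies on exactly two generators of $\cQ^+(7,q)$. No gap; the range of $m$ is inherited as you say.
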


\begin{cor}
$\cH(5,q^2)$ has an $m(q+1)$--regular system w.r.t. points for $q=2$ and $3 \le m \le 6$, $q=3$ and $3 \le m \le 25$, $q=4$ and $5 \le m \le 60$, $q=5$ and $m=30$.
\end{cor}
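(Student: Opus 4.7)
The strategy is to directly invoke part~3) of Theorem~\ref{chain} with $d=2$ and $k=1$, applied to the known $m$--regular systems of $\cH(4,q^2)$ with respect to points. In the notation of Theorem~\ref{chain}, the Hermitian variety $\cH(4,q^2)$ plays the role of $\cP_{d,e}$ with $d=2$, $e=3/2$, and field of order $q^2$, while $\cH(5,q^2)$ is the overambient $\cP_{d+1,e-1}'$. Taking $k=1$, a $(k-1)$--space is simply a point, and the theorem turns an $m$--regular system of $\cH(4,q^2)$ w.r.t.\ points into an $m((q^2)^{e-1}+1)$--regular system of $\cH(5,q^2)$ w.r.t.\ points. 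Since $e-1=1/2$ and the field order is $q^2$, this multiplier equals $m(q+1)$, as asserted.

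The rest is bookkeeping: the input examples come from the computer search carried out in \cite{BDS}, which exhibits $m$--regular systems of $\cH(4,q^2)$ w.r.t.\ points exactly for the pairs $(q,m)$ listed in the statement of the corollary, namely $q=2$ with $3\le m\le 6$, $q=3$ with $3\le m\le 25$, $q=4$ with $5\le m\le 60$, and $q=5$ with $m=30$. Plugging each such system into Theorem~\ref{chain} yields the desired $m(q+1)$--regular system of $\cH(5,q^2)$ w.r.t.\ points.

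There is no substantial obstacle here, since the construction is a mechanical application of Theorem~\ref{chain} to an already-available catalogue of small examples. The only point requiring a bit of care is the identification of the symbol ``$q$'' appearing in Theorem~\ref{chain} with the field order $q^2$ of the Hermitian variety under consideration, so that the multiplicative factor $q^{e-1}+1$ coming from the proof of Theorem~\ref{chain} correctly evaluates to $q+1$ in the parameters of the corollary rather than to some other value.
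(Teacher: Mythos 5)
Your proposal matches the paper's own derivation: the corollary is obtained by feeding the sporadic $m$--regular systems of $\cH(4,q^2)$ found in \cite{BDS} for $q\in\{2,3,4,5\}$ into part~3) of Theorem~\ref{chain} with $d=2$ and $k=1$, and your remark that the multiplier $q^{e-1}+1$ must be read over the field of order $q^2$ (so that it evaluates to $q+1$) is exactly the bookkeeping the paper performs implicitly. Nothing further is needed.
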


\section{Hemisystems of elliptic quadrics}\label{sec_3}

In \cite[Section 6]{BLP}, the authors construct hemisystems of $\cQ(4, q)$, $q$ odd, by partitioning the generators of $\cQ(4, q)$ into generators of hyperbolic quadrics $\cQ(3, q)$ embedded in $\cQ(4, q)$. In this section we investigate and generalize their idea, providing a construction of hemisystems of $\cQ^-(5, q)$, $q$ odd. Let $\perp$ be the polarity of $\PG(2n+1, q)$ associated with $\cQ^-(2n+1, q)$.

\begin{theorem}\label{th1}
Let $\P$ be a partition of the generators of the elliptic quadric $\cQ^-(2n+1, q)$, $n \ge 2$, into generators of hyperbolic quadrics $\cQ^+(2n-1, q)$ embedded in $\cQ^-(2n+1, q)$. Then $q$ is odd and $2^\frac{(q^n+1)(q^{n+1}+1)}{2(q+1)}$ hemisystems w.r.t. points of $\cQ^-(2n+1,q)$ arise, by taking one family from each of the Latin and Greek generator pairs in $\P$, and then considering the union of these generators.
\end{theorem}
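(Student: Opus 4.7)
The plan is, assuming the partition $\P$ exists, first to extract the divisibility condition that forces $q$ to be odd, then to count the size of each chosen set and show it equals half of $|\cM_{\cQ^-(2n+1,q)}|$, and finally to check that every point of $\cQ^-(2n+1,q)$ lies in exactly half of the chosen generators.

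For the necessity of $q$ odd, fix any point $P$ of $\cQ^-(2n+1,q)$. Every generator of $\cQ^-$ through $P$ lies in a unique $\cQ^+(2n-1,q) \in \P$, which must then also contain $P$; and each such $\cQ^+ \in \P$ through $P$ contributes exactly its generators through $P$, numbering $2(q+1)(q^2+1)\cdots(q^{n-2}+1)$. So this quantity divides the total number $(q^2+1)(q^3+1)\cdots(q^n+1)$ of generators of $\cQ^-$ through $P$, forcing integrality of $\frac{(q^{n-1}+1)(q^n+1)}{2(q+1)}$. When $q$ is even, the numerator is odd while the denominator is even, a contradiction; hence $q$ must be odd.

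For the size and the count of possible choices, observe that each $\cQ^+(2n-1,q) \in \P$ contains $2(q+1)(q^2+1)\cdots(q^{n-1}+1)$ generators, split evenly between its Latin and Greek families. Thus the union of one family per partition class has exactly $|\P| \cdot (q+1)(q^2+1)\cdots(q^{n-1}+1) = \frac{1}{2}|\cM_{\cQ^-(2n+1,q)}|$ elements, and since two distinct choices disagree on the full family of generators of at least one $\cQ^+ \in \P$, the $2^{|\P|}$ selections yield $2^{(q^n+1)(q^{n+1}+1)/(2(q+1))}$ pairwise distinct subsets $\cR$.

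To verify that each such $\cR$ is a hemisystem w.r.t. points, fix $P$ and let $\cQ^+_1,\dots,\cQ^+_r$ be the hyperbolic quadrics of $\P$ containing $P$. For each $j$, the generators of $\cQ^+_j$ through $P$ are in natural bijection with the generators of the quotient polar space $\cQ^+(2n-3,q)$ on $P^\perp/P$ induced by $\cQ^+_j$, and this bijection respects the Latin/Greek bipartition. Hence in each $\cQ^+_j$ the generators through $P$ split $1{:}1$ between the two families, so picking one family contributes exactly half of the generators of $\cQ^+_j$ through $P$ to $\cR$. Summing over $j$ gives $\frac{1}{2}(q^2+1)(q^3+1)\cdots(q^n+1)$ generators of $\cR$ through $P$, which is half of the generators of $\cQ^-$ through $P$, so $\cR$ is indeed a hemisystem. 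The main structural step requiring care is the claim that the Latin/Greek bipartition of $\cQ^+(2n-1,q)$ restricts, at any point $P \in \cQ^+(2n-1,q)$, to an even split on generators through $P$; this is a standard property of hyperbolic quadrics, but it is what makes the whole construction uniformly produce hemisystems.
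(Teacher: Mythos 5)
Your proof is correct and follows essentially the same route as the paper: count the members of $\P$ (globally or through a point) to force $q$ odd by parity, and use the fact that the Latin and Greek generators of each $\cQ^+(2n-1,q)$ through a point are equal in number to conclude that each selection picks exactly half the generators through every point. You are in fact slightly more complete than the paper, since you justify the even Latin/Greek split at a point via the quotient $\cQ^+(2n-3,q)$ and explicitly verify that the chosen sets have size $\tfrac{1}{2}|\cM_{\cQ^-(2n+1,q)}|$ and are pairwise distinct.
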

\begin{proof}
Since the generators of $\cQ^-(2n +1, q)$ are $\prod_{i = 2}^{n+1}(q^i+1)$ and the generators of $\cQ^+(2n-1, q)$ are $\prod_{i = 0}^{n-1}(q^i+1)$, we have that the number of members in $\P$ is $\frac{(q^n+1)(q^{n+1}+1)}{2(q+1)}$ and hence $q$ has to be odd. Similarly through a point $P$ of $\cQ^-(2n +1, q)$ there pass $\prod_{i = 2}^{n}(q^i +1)$ generators of $\cQ^-(2n+1, q)$ and $\prod_{i = 0}^{n-2}(q^i+1)$ generators of $\cQ^+(2n-1, q)$, therefore every point of $\cQ^-(2n+1, q)$ is contained in a constant number of elements of $\P$. On the other hand, since the number of generators of a Latin family through $P$ equals the number of generators of a Greek family through $P$, by selecting one family from each Latin and Greek pair in $\P$ amounts to choose exactly half of the generators through $P$. Finally note that the number of hemisystems obtained by selecting one family from each of the elements of $\P$ equals $2^\frac{(q^n+1)(q^{n+1}+1)}{2(q+1)}$.
\end{proof}

\begin{prop}
Let $\cL$ be a set of $\frac{(q^n+1)(q^{n+1}+1)}{2(q+1)}$ lines external to $\cQ^-(2n+1, q)$ such that
$$
|\langle r, r' \rangle \cap \cQ^-(2n+1, q)| \ne
\begin{cases}
 1 & \mbox{ if } \;\; |r \cap r'| = 1, \\
 q+1 & \mbox{ if } \;\; |r \cap r'| = 0 ,
\end{cases}
$$
for each $r, r' \in \cL$, $r \ne r'$. Then there exists a partition of the generators of $\cQ^-(2n+1, q)$ into generators of a $\cQ^+(2n-1, q)$.
\end{prop}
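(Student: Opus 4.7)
The plan is to set $Q := \cQ^-(2n+1, q)$ and attach to each $r\in\cL$ the quadric $Q_r := r^\perp \cap Q$; I would then show that the generators of the quadrics $Q_r$, $r\in\cL$, partition $\cM_Q$ by combining a counting argument with pairwise disjointness of the generator sets.

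First, since $r$ is external, the quadratic form restricts to an anisotropic (hence non-degenerate) form on $r$; this gives $V = r\oplus r^\perp$ and forces the induced form on $r^\perp$ to have rank $2n$ and Witt index $n$, so $Q_r \cong \cQ^+(2n-1, q)$. A direct telescoping yields
$$
|\cL|\cdot|\cM_{\cQ^+(2n-1, q)}| \;=\; \frac{(q^n+1)(q^{n+1}+1)}{2(q+1)}\cdot 2\prod_{i=1}^{n-1}(q^i+1) \;=\; \prod_{i=2}^{n+1}(q^i+1) \;=\; |\cM_Q|,
$$
so it suffices to prove that distinct $Q_r, Q_{r'}$ share no generator. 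Assume for contradiction that $\Pi\in\cM_Q$ lies in both. Then $\Pi\subseteq r^\perp\cap r'^\perp$, hence $U := \langle r, r'\rangle \subseteq \Pi^\perp$. Since $\Pi$ is a maximal totally singular subspace of $Q$, the form induced on $\Pi^\perp/\Pi$ is anisotropic of rank $2$, whence $\Pi^\perp\cap Q = \Pi$; in particular $U\cap Q \subseteq \Pi$. My aim is to show that, in every configuration permitted by the hypothesis, $U\cap Q$ spans $U$, forcing $U\subseteq\Pi\subseteq Q$ and contradicting that $r$ is external.

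The case analysis follows the dichotomy $|r\cap r'|\in\{0,1\}$. If $|r\cap r'|=1$, then $U$ is a plane and $U\cap Q$ is a conic of $\PG(2, q)$; since any two lines of $U$ meet and $r, r'$ are external, the conic cannot contain a line, so it is either a pair of $GF(q^2)$-conjugate lines with a $GF(q)$-rational intersection point (giving $|U\cap Q|=1$) or a non-degenerate conic (giving $|U\cap Q|=q+1$). The hypothesis rules out the former, and in the latter the $q+1$ points of the conic are in general position and therefore span $U$. If $|r\cap r'|=0$, then $U$ is a solid and $U\cap Q$ is a quadric of $\PG(3, q)$ containing two disjoint external lines; since every line of $\PG(3, q)$ meets every plane, pairs of planes and double planes are excluded, leaving $|U\cap Q|\in\{q+1,\, q^2+1,\, q^2+q+1,\, (q+1)^2\}$, corresponding respectively to a pair of conjugate planes meeting in a line, $\cQ^-(3, q)$, a cone with a point vertex over a non-degenerate conic, and $\cQ^+(3, q)$. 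The hypothesis eliminates the value $q+1$, and in each of the three remaining cases $U\cap Q$ spans $U$ (by non-degeneracy in the two smooth cases, and by vertex-plus-base in the cone case).

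The main obstacle is the classification in the solid case: one must carefully enumerate which quadrics of $\PG(3, q)$ can contain two disjoint external lines, and then verify in each surviving case that $U\cap Q$ spans $U$. The two inequalities in the hypothesis on $\cL$ are precisely what is needed to eliminate the two pathological configurations (a single $GF(q)$-rational point in the planar case, a single $GF(q)$-rational line in the solid case) where $U\cap Q$ fails to span $U$ and the argument would otherwise break down.
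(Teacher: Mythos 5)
Your proposal is correct and uses exactly the construction of the paper's (one--line) proof, namely taking the hyperbolic quadrics $r^\perp \cap \cQ^-(2n+1,q)$ for $r \in \cL$; you simply supply the counting and pairwise--disjointness details that the paper leaves implicit. Both the identification of the two pathological sections (a single point in the plane case, a line in the solid case) excluded by the hypothesis and the spanning argument are sound.
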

\begin{proof}
The generators of the hyperbolic quadrics $r^\perp \cap \cQ^-(2n+1, q)$, with $r \in \cL$, form the partition.
\end{proof}

Below we show that there exists such a partition of the generators of $\cQ^-(5, q)$.

\bigskip
Let $\Pi$ be a solid of $\PG(5, q)$ meeting $\cQ^-(5, q)$ in a hyperbolic quadric $\cQ^+(3, q)$ and let $\ell = \Pi^\perp$. Let $\cX$ be the set of lines of $\Pi$ that are external to $\cQ^-(5, q)$. Let $\ell_1$ be a line of $\cX$ and let $\ell_2 = \ell_1^\perp \cap \Pi$. Denote by $\Pi_i = \ell_i^\perp$, $i = 1,2$. Since $\ell_i \in \cX$, $\Pi_i \cap \cQ^-(5, q)$ is a three--dimensional hyperbolic quadric. Let $\cX_1$ be the set of lines of $\Pi_1$ external to $\cQ^-(5, q)$ and intersecting $\ell$ in at least a point; let $\cX_2$ be the set of lines of $\Pi_2$ external to $\cQ^-(5, q)$ and meeting both $\ell$ and $\ell_1$ in exactly one point. Note that in $\PG(3, q)$ there pass $q(q-1)/2$ lines external to a $\cQ^+(3, q)$ through a point off the quadric. Thus $|\cX| = q^2(q-1)^2/2$, $|\cX_1| = (q-2)(q+1)^2/2+1$ and $|\cX_2| = (q+1)^2/2$ and the set $\cX \cup \cX_1 \cup \cX_2$ consists of $(q^2-q+1)(q^2+1)/2$ lines external to $\cQ^-(5, q)$.

\begin{remark}
Let $P$ be a point of $\ell_1 \cup \ell_2$ and let us denote by $\pi_P$ the plane spanned by $\ell$ and $P$. Note that $\pi_P \cap \cQ^-(5, q)$ is a conic of which $P$ is an internal point. Hence the lines intersecting both $\ell$ and $\ell_i$, in exactly one point are either external or secant and they are equals in number, for each $i \in\{1,2\}$.
\end{remark}

\begin{theorem}
For each $r, r' \in \cX \cup \cX_1 \cup \cX_2$, $r \ne r'$, we have
$$
|\langle r, r' \rangle \cap \cQ^-(5, q)| \ne
\begin{cases}
 1 & \mbox{ if } \;\; |r \cap r'| = 1, \\
 q+1 & \mbox{ if } \;\; |r \cap r'| = 0 ,
\end{cases}
$$

\end{theorem}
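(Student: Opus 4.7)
Plan:

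My plan is a case analysis on $(r, r')$ according to which of $\cX, \cX_1, \cX_2$ each line belongs to and whether $r \cap r'$ is empty or a single point. As a preliminary step I would record the pairwise intersections of the three solids: since $\ell_1, \ell_2$ are skew external lines of $\Pi$ with $\langle \ell_1, \ell_2 \rangle = \Pi$, polarity gives $\Pi_1 \cap \Pi_2 = \langle \ell_1, \ell_2 \rangle^\perp = \Pi^\perp = \ell$, and similarly $\Pi \cap \Pi_2 = \ell_1$ and $\Pi \cap \Pi_1 = \ell_2$. Each of $\Pi, \Pi_1, \Pi_2$ cuts $\cQ^-(5, q)$ in a hyperbolic $\cQ^+(3, q)$.

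In the three ``same-solid'' cases, where $r$ and $r'$ both lie in one of $\Pi, \Pi_1, \Pi_2$, the span $\langle r, r' \rangle$ is either a plane of the ambient solid or the whole solid. A plane section of a hyperbolic $\cQ^+(3, q)$ is always a smooth conic ($q+1$ points) or a pair of concurrent grid lines ($2q+1$ points): the restriction of a non-degenerate hyperbolic $4$-dimensional quadratic form to a $3$-dimensional subspace has rank $3$ or split rank $2$, and never anisotropic rank $2$, since rank-$2$ sections occur only at tangent hyperplanes and these always cut $\cQ^+(3,q)$ in a pair of grid lines. The full solid section has $(q+1)^2$ points. Since $q+1, 2q+1 > 1$ and $(q+1)^2 > q+1$, neither forbidden value is attained in any of these cases.

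The substantive cases are the six mixed ones (one line from each of two distinct sets among $\cX, \cX_1, \cX_2$). In each, $r \cap r'$ is confined to the pairwise intersection line ($\ell, \ell_1$, or $\ell_2$) of the two solids containing $r$ and $r'$, hence is either empty or a point on that line. When $r \cap r'$ is a point, $\pi = \langle r, r' \rangle$ is a plane and $|\pi \cap \cQ^-(5, q)| = 1$ would force $Q|_\pi$ to be a rank-$2$ anisotropic form whose radical is the unique quadric point of $\pi$. When $r \cap r' = \emptyset$, $\Sigma = \langle r, r' \rangle$ is a solid and $|\Sigma \cap \cQ^-(5, q)| = q+1$ would force $Q|_\Sigma$ to be rank-$2$ anisotropic with radical a $(q+1)$-point line of quadric points. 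The strategy is to fix coordinates $Q(x) = x_0 x_1 + x_2 x_3 + f(x_4, x_5)$ for an anisotropic binary form $f$, place $\ell, \ell_1, \ell_2$ in standard position, parameterize generic pairs $(r, r')$ in each mixed case, and compute the Gram matrix of $Q$ restricted to $\langle r, r' \rangle$, verifying that its rank and isotropy type exclude the anisotropic rank-$2$ possibility.

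The main obstacle is this mixed-case verification: $\langle r, r' \rangle$ generally lies in none of the distinguished solids, so no direct structural shortcut is available. The leverage comes from the transversality constraints defining $\cX_1$ (lines meeting $\ell$) and $\cX_2$ (lines meeting both $\ell$ and $\ell_1$), together with the externality of $r$ and $r'$, which pin down enough entries of the Gram matrix of $Q|_{\langle r, r' \rangle}$ to rule out the anisotropic rank-$2$ type in every subcase.
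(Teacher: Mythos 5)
Your reduction is sound as far as it goes: the incidences $\Pi\cap\Pi_1=\ell_2$, $\Pi\cap\Pi_2=\ell_1$, $\Pi_1\cap\Pi_2=\ell$ are correct, the three same-solid cases are handled properly (a plane of a solid meeting $\cQ^-(5,q)$ in a $\cQ^+(3,q)$ cuts the quadric in $q+1$ or $2q+1$ points, never in $1$, and the full solid gives $(q+1)^2\ne q+1$), and you correctly identify exactly which section types must be excluded in the mixed cases (a plane meeting the quadric in a single point, resp.\ a solid meeting it in a single generator). But all of the substance of the theorem lives in the mixed cases, and there your proposal stops at a declaration of intent: ``compute the Gram matrix \dots verifying that its rank and isotropy type exclude the anisotropic rank-$2$ possibility,'' backed only by the assertion that externality and transversality ``pin down enough entries \dots in every subcase.'' That assertion is precisely what has to be proved, and nothing in the proposal proves it: no coordinates are fixed, no parameterization of (say) an external line of $\Pi_1$ through a point of $\ell$ is written down, and no discriminant is computed. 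As it stands this is a plan with the hard part outsourced to an unperformed calculation, so there is a genuine gap.

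For comparison, the paper never touches a Gram matrix in the mixed cases; instead, for each pair it exhibits a subspace containing $\langle r,r'\rangle$ whose polar image is visibly under control. For instance, for $r\in\cX$ and $r'\in\cX_2$ with $P=r'\cap\ell_1\in r$, one has $\langle r,r'\rangle\subset\langle \ell,r\rangle$ and $\langle \ell,r\rangle^{\perp}=r^{\perp}\cap\Pi$ is an external line of $\Pi$, so $\langle \ell,r\rangle$ meets the quadric in a $\cQ^+(3,q)$ and the plane $\langle r,r'\rangle$ meets it in a conic; when $P\notin r$, the hyperplane $\Gamma=\langle r,P,\ell\rangle$ has $\Gamma^{\perp}=\langle r,P\rangle^{\perp}\cap\Pi$, a point off the quadric because $\langle r,P\rangle$ meets $\Pi\cap\cQ^-(5,q)$ in a nondegenerate conic, whence $\Gamma\cap\cQ^-(5,q)$ is a $\cQ(4,q)$ and the solid $\langle r,r'\rangle$ meets the quadric in at least $q^2+1>q+1$ points. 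The remaining mixed cases are dispatched by the same device. If you wish to keep your computational route you must actually carry the verification through each subcase; otherwise the synthetic polarity argument is the one the definitions of $\cX_1$ and $\cX_2$ are really built for, and it is shorter.
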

\begin{proof}
Let $r, r'$ be two distinct lines both lying in $\cX$ (or in $\cX_1$, or in $\cX_2$). Then $|\langle r, r' \rangle \cap \cQ^-(5, q)|$ equals $(q+1)^2$ or $q+1$, according as $|r \cap r'| = 0$ or $|r \cap r'| = 1$, respectively.

Assume that $r \in \cX$ and $r' \in \cX_2$. Let $P$ be the point $r' \cap \ell_1$. We have two possibilities: either $P \in r$ or $P \notin r$. In the former case since $|r \cap r'| = 1$, $\langle r', r \rangle$ is a plane contained in the three--space $\langle \ell, r \rangle$ and $\langle \ell, r \rangle^\perp = r^\perp \cap \ell^\perp = r^\perp \cap \Pi$ is a line of $\Pi$ external to $\cQ^-(5, q)$. Hence $|\langle r, r' \rangle \cap \cQ^-(5, q)| = q+1$. In the latter case, let $\Gamma$ be the hyperplane $\langle r, r', \ell \rangle = \langle r, P, \ell \rangle$. Then $\Gamma^\perp = \langle r, P \rangle^\perp \cap \Pi$ is a point of $\Pi$ off the quadric $\cQ^-(5, q)$ since $\langle r, P \rangle \cap \cQ^-(5, q)$ is a conic. Therefore $\Gamma \cap \cQ^-(5, q)$ is a parabolic quadric $\cQ(4, q)$ and $|\langle r, r' \rangle \cap \cQ^-(5, q)| \in \{q^2+q+1, (q+1)^2, q^2+1\}$.

Assume that $r \in \cX$ and $r' \in \cX_1$. If $r' = \ell$, then $\langle r, r' \rangle^\perp \in \cX$ and hence $|\langle r, r' \rangle \cap \cQ^-(5, q)| = (q+1)^2$. If $r'$ meets $\ell_2$ in a point, repeating the same argument used before we are done. Let $r'$ be a line meeting $\ell$ in a point and disjoint from $\ell_2$. Then the plane $\langle r', \ell \rangle$ intersects $\Pi$ in a point, say $T$. Let $\Gamma$ be the projective space $\langle r, r', \ell \rangle$. If $T \in r$, then $\Gamma = \langle r, r' \rangle$ is a solid, $\Gamma^\perp \in \cX$ and $|\langle r, r' \rangle \cap \cQ^-(5, q)| = (q+1)^2$. If $T \notin r$, then $\Gamma$ is the hyperplane $\langle r, T, \ell \rangle$ and as before $\Gamma^\perp = \langle r, T \rangle^\perp \cap \Pi$ is a point of $\Pi$ off the quadric $\cQ^-(5, q)$ since $\langle r, T \rangle \cap \cQ^-(5, q)$ is a conic. Therefore $\Gamma \cap \cQ^-(5, q)$ is a parabolic quadric $\cQ(4, q)$ and $|\langle r, r' \rangle \cap \cQ^-(5, q)| \in \{q^2+q+1, (q+1)^2, q^2+1\}$.

Assume that $r \in \cX_1$ and $r' \in \cX_2$. Let $|r \cap r'| = 1$. If $r = \ell$, then $|\langle r, r' \rangle \cap \cQ^-(5, q)| = q+1$. If $r \ne \ell$, then the plane $\langle r, r' \rangle$ is contained in the solid $\langle \ell_1, r \rangle$, where $|\langle \ell_1, r \rangle \cap \cQ^-(5, q)| = (q+1)^2$, since $\langle \ell_1, r \rangle^\perp = r^\perp \cap \Pi_1$ is an external line of $\Pi_1$. Therefore, we have again that $|\langle r, r' \rangle \cap \cQ^-(5, q)| = q+1$. Let $|r \cap r'| = 0$ and let $\Gamma$ be the projective space spanned by $\langle r, r', \ell \rangle$. Since $r$ is not contained in $\Pi_2$, then $\Gamma$ is a solid and $\ell_1 \not\subseteq \Gamma$. Hence $\langle \Gamma, \ell_1 \rangle$ is a hyperplane and $\langle \Gamma, \ell_1 \rangle^\perp \in \ell_2$. Therefore $\langle \Gamma, \ell_1 \rangle \cap \cQ^-(5, q)$ is a parabolic quadric $\cQ(4, q)$ and $|\langle r, r' \rangle \cap \cQ^-(5, q)| \in \{q^2+q+1, (q+1)^2, q^2+1\}$.
\end{proof}

\begin{prob}
Determine whether or not the generators of an elliptic quadric $\cQ^-(2n+1, q)$, $q$ odd, $n \ge 3$, can be partitioned into generators of hyperbolic quadrics $\cQ^+(2n-1, q)$ embedded in $\cQ^-(2n+1, q)$.
\end{prob}

\section{Regular systems arising from $k$--systems of polar spaces}\label{sec_4}

In this section we explore a class of regular systems w.r.t. points of a polar space $\cP$ that can be obtained by means of a $k$--system of $\cP$. The notion of $k$--system was introduced by Shult and Thas in \cite{ST}.

\begin{defin}
A $k$--system of a polar space $\cP_{d, e}$, $1 \le k \le d - 2$, is a set $\Pi_1, \dots, \Pi_{q^{d+e-1} + 1}$ of $k$--spaces of $\cP_{d, e}$, such that a generator of $\cP_{d, e}$ containing $\Pi_i$, is disjoint from $\cup_{j = 1, j \ne i}^{q^{d + e - 1} + 1} \Pi_j$.
\end{defin}

Let $\cS$ be a $k$--system of $\cP_{d, e}$ and let $\cG$ the set of generators of $\cP_{d, e}$ containing one element of $\cS$. It is not difficult to see that $\cG$ is a regular system of $\cP_{d, e}$ w.r.t. points.

\begin{lemma}\label{k-system}
The set $\cG$ is a $|\cM_{\cP_{d - k - 1, e}}|$--regular system of $\cP_{d, e}$ w.r.t. points.
\end{lemma}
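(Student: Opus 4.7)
The plan is to use the defining property of a $k$--system to control incidences and reduce the lemma to a constancy statement on $n_P := |\{i : \Pi_i \subseteq P^\perp\}|$ for points $P$ not covered by $\cS$; this constancy will then be settled by invoking the Shult--Thas two--character property of $k$--systems. First I would observe that no generator of $\cP_{d,e}$ can contain two distinct elements $\Pi_i, \Pi_j$ of $\cS$: the defining property applied to $\Pi_i$ forces every generator through $\Pi_i$ to be disjoint from $\Pi_j$, contradicting $\Pi_j \subseteq G$. In particular the $\Pi_i$ are pairwise disjoint, each $G \in \cG$ contains a unique $\Pi_i$, and $|\cG| = (q^{d+e-1}+1)\,|\cM_{\cP_{d-k-1,e}}|$.

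Next I would fix a point $P \in \cP_{d,e}$ and compute $N(P) := |\{G \in \cG : P \in G\}|$ in two cases. If $P \in \Pi_i$ for some (necessarily unique) $i$, any $G \in \cG$ through $P$ contains some $\Pi_j$, and $j \ne i$ would force $G \cap \Pi_i = \emptyset$, contradicting $P \in G \cap \Pi_i$; hence $\Pi_i \subseteq G$ and $N(P)$ equals the number of generators of $\cP_{d,e}$ through the fixed $k$--space $\Pi_i$, namely $|\cM_{\cP_{d-k-1,e}}|$. If instead $P \notin \bigcup_j \Pi_j$, each $G \in \cG$ through $P$ contains $\langle P, \Pi_i\rangle$ for the unique $\Pi_i \subseteq G$, and distinct $\Pi_i, \Pi_j$ in this situation yield distinct $(k+1)$--spaces (else $\Pi_i \cap \Pi_j$ would be a $(k-1)$--space, contradicting disjointness). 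Consequently
\[
N(P) = n_P \cdot |\cM_{\cP_{d-k-2,e}}|,
\]
and since $|\cM_{\cP_{d-k-1,e}}|/|\cM_{\cP_{d-k-2,e}}| = q^{d+e-k-2}+1$, the lemma reduces to showing $n_P = q^{d+e-k-2}+1$ for every $P \notin \bigcup_j \Pi_j$.

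The main obstacle is establishing this constancy: a direct double count of pairs $(P, \Pi_i)$ with $P \perp \Pi_i$ only recovers the correct average of $n_P$. My plan is to invoke the Shult--Thas theorem on $k$--systems, which asserts that the point set $K = \bigcup_i \Pi_i$ is a two--character set with respect to the tangent hyperplanes $P^\perp$, so $|K \cap P^\perp|$ depends only on whether $P \in K$ or not. Since $\Pi_i \cap P^\perp$ equals $\Pi_i$ when $\Pi_i \subseteq P^\perp$ and is a hyperplane of $\Pi_i$ otherwise, for $P \notin K$ one has
\[
|K \cap P^\perp| \;=\; n_P \cdot \frac{q^{k+1}-1}{q-1} \,+\, (|\cS| - n_P) \cdot \frac{q^k - 1}{q-1},
\]
so constancy of the left--hand side forces constancy of $n_P$, whose common value is then pinned to $q^{d+e-k-2}+1$ by the global count. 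Combining the two cases gives $N(P) = |\cM_{\cP_{d-k-1,e}}|$ for every point $P$ of $\cP_{d,e}$, proving the lemma.
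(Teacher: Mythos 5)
Your argument is correct and is essentially the paper's proof: the same two-case split on whether $P$ lies on some $\Pi_i$, the same count $|\cM_{\cP_{d-k-1,e}}|$ of generators through $\Pi_i$ in the first case, and the crux --- that $n_P=q^{d+e-k-2}+1$ for every $P$ off $\bigcup_i\Pi_i$ --- delegated to Shult--Thas, which is exactly what the paper invokes as \cite[Theorem 5]{ST}. Your detour through the two-character property is the only (cosmetic) difference, and it is slightly backwards: the bare statement that $K$ is a two-intersection set with respect to hyperplanes does not by itself guarantee that the character of $P^\perp$ is governed by whether $P\in K$, so the precise fact you need is the tangent-hyperplane count of \cite[Theorem 5]{ST} itself, from which Shult and Thas then deduce the two-intersection property rather than the other way around.
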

\begin{proof}
Let $P$ be a point of $\cP_{d, e}$. If $P \in \Pi_i$, for some $i$, then there are $|\cM_{\cP_{d - k - 1, e}}|$ members of $\cG$ containing $P$. If $P$ is not contained in an element of $\cS$, then from \cite[Theorem 5]{ST}, there are $q^{d - k - 2 + e} + 1$ $(k+1)$--spaces of $\cP_{d, e}$ containing $P$ and a member of $\cS$. Hence the point $P$ belongs to $|\cM_{d - k - 2, e}| (q^{d - k  - 2 + e} + 1) = |\cM_{\cP_{d - k - 1, e}}|$ generators of $\cG$.
\end{proof}

\subsection{Regular systems of $\cQ^+(5,q)$, $q$ odd}

Here we turn our attention to the hyperbolic quadric of $\PG(5,q)$, $q$ odd. The hyperbolic quadric ${\cal Q}^+(5,q)$ of $\PG(5,q)$ is also known as the {\em Klein quadric}. There exists a correspondence, which we will refer to as {\em Klein correspondence}, between the lines of $\PG(3,q)$ and the points of $\cQ^+(5,q)$. In particular the $(q^2+1)(q^2+q+1)$ points of $\cQ^+(5,q)$ correspond one to one to the lines of $\PG(3,q)$. The quadric $\cQ^+(5,q)$ contains two families of planes, each of size  $q^3+q^2+q+1$ called {\em Latin planes} and {\em Greek planes}. A Latin plane and a Greek plane correspond to lines through a point and to lines contained in a plane of $\PG(3,q)$, respectively. Any two Greek planes and any two Latin planes share exactly one point. A Greek plane and a Latin plane are either disjoint or meet in a line \cite[Section 15.4]{H}.

A $1$--system of $\cQ^+(5, q)$ is a set $\cS$ of $q^2+1$ lines of $\cQ^+(5, q)$ such that no plane of $\cQ^+(5, q)$ through a line of $\cS$ has a point in common with the remaining lines of $\cS$. From \cite[Theorem 15]{ST}, the quadric $\cQ^+(5,q)$, $q$ odd, has a unique $1$--system $\cS$ and the points covered by the $q^2+1$ lines of $\cS$ correspond, under the Klein correspondence, to the lines that are tangent to an elliptic quadric of $\PG(3, q)$. In \cite{E}, Ebert provided a partition $\cF$ of $\PG(3,q)$ into $q+1$ disjoint three--dimensional elliptic quadrics, say $\cE_0, \dots, \cE_q$. This result is achieved by considering a subgroup of order $q^2+1$ of a Singer cyclic group of $\PGL(4,q)$. In particular, if $q$ is odd, we have the following result.

\begin{lemma} \label{ebert} \cite[Corollary 1, Theorem 5]{E}
The lines of $\PG(3,q)$, $q$ odd, are partitioned as follows:
\begin{itemize}
\item[i)] $(q^2+1)^2/2$ lines tangent to no elliptic quadrics of $\cF$ and hence secant to $(q+1)/2$ of them;
\item[ii)] $(q+1)^2(q^2+1)/2$ lines tangent to two elliptic quadrics of $\cF$ and hence secant to $(q-1)/2$ of them.

Also,
\item[iii)] no plane of $\PG(3, q)$ is tangent to two distinct elliptic quadrics of $\cF$;
\item[iv)] there exists a line of $\PG(3,q)$ that is tangent to $\cE_i$ and $\cE_j$, with $0 \le i, j \le q$, $i \ne j$, if and only if $i+j$ is odd.
\end{itemize}
\end{lemma}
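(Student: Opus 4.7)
The plan is to establish the four claims in order, using progressively refined arguments. For every line $\ell$ of $\PG(3,q)$, its $q+1$ points are partitioned by $\cF$, so if $a_\ell, b_\ell, c_\ell$ denote the number of quadrics of $\cF$ to which $\ell$ is secant, tangent, external respectively, then $a_\ell+b_\ell+c_\ell=q+1$ and $2a_\ell+b_\ell=q+1$, whence $a_\ell=c_\ell$ and $b_\ell=q+1-2a_\ell$; in particular $b_\ell$ is even since $q$ is odd. The same double count at the level of planes proves (iii): a plane $\pi$ meets each $\cE_i$ either in a non-degenerate conic of $q+1$ points or in a single point, so with $S$ secant and $T$ tangent quadrics one has $S+T=q+1$ and $S(q+1)+T=q^2+q+1$, giving $S=q$ and $T=1$.

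The heart of the argument is to show that $b_\ell\in\{0,2\}$ for every line $\ell$. For this I would use the fact, inherent in Ebert's Singer-cyclic construction, that $\cF$ is a linear pencil of quadrics: if $Q_0, Q_1$ are quadratic forms defining $\cE_0, \cE_1$, then each $\cE_i$ is the vanishing locus of some $Q_0+\lambda_i Q_1$ and the $\lambda_i$ range over $\PG(1,q)$. Restricting to $\ell$ produces a pencil of binary quadratic forms on $\ell\cong\PG(1,q)$, and the quadric $\cE_\lambda$ is tangent to $\ell$ precisely when the corresponding binary form has vanishing discriminant; but this discriminant is itself a quadratic polynomial in $\lambda$, so it has at most two zeros in $\PG(1,q)$. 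Combined with the parity constraint above, this forces $b_\ell\in\{0,2\}$. Items (i) and (ii) then follow from the double count $\sum_\ell b_\ell=(q+1)\cdot(q+1)(q^2+1)$: letting $N_0, N_2$ denote the number of lines with $b_\ell=0,2$, one has $2N_2=(q+1)^2(q^2+1)$, hence $N_2=(q+1)^2(q^2+1)/2$ and $N_0=(q^2+1)(q^2+q+1)-N_2=(q^2+1)^2/2$.

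Finally, for (iv), let $g$ be a generator of the Singer cyclic group of $\PGL(4,q)$ used in the construction, chosen so that $g\cdot\cE_i=\cE_{i+1}$ with indices taken modulo $q+1$. Since $g\in\PGL(4,q)$, it permutes the set of unordered pairs $\{i,j\}$ admitting a common tangent line, and because the shift $\{i,j\}\mapsto\{i+1,j+1\}$ preserves $i+j\bmod 2$, the claim is consistent with this symmetry. I would then verify the two directions separately: for each odd difference $d$, exhibit an explicit common tangent of $\cE_0$ and $\cE_d$ in suitable Singer coordinates and propagate by the $g$-action to all pairs $\{i,i+d\}$; for each even $d$, use the discriminant calculation of the previous paragraph together with the explicit expressions of the $\lambda_i$ coming from $GF(q^2)$ to rule out a common tangent. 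The main obstacle is this coordinate computation in the Singer model, which is the technical core of \cite[Theorem~5]{E}; once it is done, the count that each of the $(q+1)^2/4$ odd-parity pairs carries exactly $2(q^2+1)$ common tangents provides a consistency check with $N_2=(q+1)^2(q^2+1)/2$.
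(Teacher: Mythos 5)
The paper gives no proof of this lemma; it is imported wholesale from Ebert \cite[Corollary 1, Theorem 5]{E}, so there is no internal argument to measure yours against. Judged on its own, your treatment of $(i)$--$(iii)$ is essentially complete and correct: the plane count $S+T=q+1$, $S(q+1)+T=q^2+q+1$, hence $T=1$, proves $(iii)$ using only that $\cF$ partitions the points and that a plane meets an elliptic quadric in $1$ or $q+1$ points; the parity identity $b_\ell=q+1-2a_\ell$ together with the discriminant bound $b_\ell\le 2$ gives $b_\ell\in\{0,2\}$, and the count $\sum_\ell b_\ell=(q+1)^2(q^2+1)$ then yields $(i)$ and $(ii)$. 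The one thing you assert rather than prove is that $\cF$ is a pencil. It is true (the $\cE_j$ are the zero loci of $T_{\GF(q^2)/\GF(q)}(\alpha\, x^{q^2+1})$ as $\alpha$ ranges over $\GF(q^2)^*$, a two--dimensional $\GF(q)$--space of quadratic forms), but it is the load--bearing hypothesis behind $b_\ell\le 2$, so it needs at least this one--line justification or a citation. You should also say explicitly why the discriminant cannot vanish identically on the pencil: that would force every member to be tangent to $\ell$, i.e.\ $b_\ell=q+1$, which is odd and contradicts the parity constraint.

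The genuine gap is $(iv)$. What you have written there is a plan, not a proof. The observation that the Singer shift $\cE_i\mapsto\cE_{i+1}$ preserves $i+j\bmod 2$ only shows that the existence of a common tangent depends on the residue $j-i\bmod(q+1)$ and is \emph{consistent} with the stated parity criterion; it proves neither implication. The entire content of Ebert's Theorem 5 is precisely the coordinate computation you defer --- exhibiting a common tangent of $\cE_0$ and $\cE_d$ for each odd $d$ and excluding one for each nonzero even $d$ --- and the numerical check $N_2=\tfrac{(q+1)^2}{4}\cdot 2(q^2+1)$ cannot substitute for it, since the same count would be consistent with common tangents occurring for any other suitable collection of differences. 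As it stands, $(iv)$ is unproven.
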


Let $\cL_i$ be the set consisting of the $(q+1)(q^2+1)$ lines that are tangent to the elliptic quadric $\cE_i$ and let $\cS_i$ be the corresponding $1$--system of $\cQ^+(5,q)$, obtained by applying the Klein correspondence to the lines of $\cL_i$, $0 \le i \le q$. By using Lemma \ref{ebert}, we are able to prove the following result.

\begin{theorem}\label{klein}
There is no plane of $\cQ^+(5,q)$, $q$ odd, containing two lines of $\cup_{i = 0}^q \cS_i$.
\end{theorem}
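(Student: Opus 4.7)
The plan is to translate the statement back to $\PG(3,q)$ via the Klein correspondence and then to read off the conclusion directly from Lemma~\ref{ebert}(iii) together with the fact that $\cF$ is a partition of $\PG(3,q)$. Recall that a line on $\cQ^+(5,q)$ corresponds to a pencil of lines of $\PG(3,q)$, i.e.~to a flag $(P,\pi)$ with $P\in\pi$, where the $q+1$ points of the line correspond to the $q+1$ lines through $P$ in $\pi$. A Latin plane of $\cQ^+(5,q)$ corresponds to the star of all lines through a fixed point $P_0\in\PG(3,q)$, so two distinct lines lying in a common Latin plane correspond to two flags $(P_0,\pi_1)$, $(P_0,\pi_2)$ sharing the point $P_0$; a Greek plane corresponds to a ruled plane $\pi_0\subset\PG(3,q)$, so two distinct lines in a common Greek plane correspond to flags $(P_1,\pi_0)$, $(P_2,\pi_0)$ sharing the plane $\pi_0$.

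Next I would identify the elements of $\cS_i$ under this correspondence. A line $\ell$ of $\cS_i$ is covered by points whose corresponding lines of $\PG(3,q)$ are all tangent to $\cE_i$; writing $\ell$ as the flag $(P,\pi)$, this says every line of $\PG(3,q)$ through $P$ lying in $\pi$ is tangent to $\cE_i$, which forces $P\in\cE_i$ and $\pi=T_P\cE_i$, the tangent plane to $\cE_i$ at $P$. Hence $\cS_i=\{\,(P,T_P\cE_i):P\in\cE_i\,\}$.

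Now I would argue by contradiction: assume a plane $\Sigma$ of $\cQ^+(5,q)$ contains two distinct lines $\ell_1\in\cS_i$ and $\ell_2\in\cS_j$, written as flags $(P_1,T_{P_1}\cE_i)$ and $(P_2,T_{P_2}\cE_j)$. In the Latin case, $P_1=P_2=:P$; since $\cF$ partitions $\PG(3,q)$, the membership $P\in\cE_i\cap\cE_j$ forces $i=j$, and then both flags equal $(P,T_P\cE_i)$, so $\ell_1=\ell_2$, a contradiction. In the Greek case, the common plane of the flags satisfies $T_{P_1}\cE_i=T_{P_2}\cE_j$; by Lemma~\ref{ebert}(iii) no plane of $\PG(3,q)$ is tangent to two distinct members of $\cF$, so $i=j$, and then this plane is tangent to $\cE_i$ at the two distinct points $P_1,P_2$, which is impossible for a non-degenerate quadric. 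This contradicts the assumption and proves the theorem.

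The whole argument is essentially a dictionary translation; the only place where real content enters is the appeal to Lemma~\ref{ebert}(iii), so the main ``obstacle'' is simply making sure the Klein-correspondence bookkeeping is done correctly, in particular that the lines of $\cS_i$ really are the tangent flags of $\cE_i$ and that Latin versus Greek planes correspond exactly to sharing the point versus sharing the plane of the flag.
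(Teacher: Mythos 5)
Your proof is correct and follows essentially the same route as the paper: translate everything to $\PG(3,q)$ via the Klein correspondence, identify the lines of $\cS_i$ with the tangent pencils $(P,T_P\cE_i)$, and split into the Latin case (killed by $\cE_i\cap\cE_j=\emptyset$) and the Greek case (killed by Lemma~\ref{ebert}(iii)). The only cosmetic difference is that you dispose of the case $i=j$ by showing the two flags coincide, whereas the paper dismisses it upfront via the defining property of a $1$--system.
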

\begin{proof}
Assume by contradiction that a generator of $\cQ^+(5, q)$, say $\pi$, contains two lines, say $ \ell_1 \in \cS_i$ and $\ell_2 \in \cS_j$. Note that necessarily $i \ne j$. Under the inverse of the Klein correspondence, the points of $\ell_1$ are mapped to the lines that are tangent to $\cE_i$ at a point, say $P_1$, and the points of $\ell_2$ are mapped to the lines that are tangent to $\cE_j$ at a point, say $P_2$. If $\pi$ is a Latin plane, then $P_1 = P_2$, contradicting the fact that $|\cE_i \cap \cE_j| = 0$. If $\pi$ is a Greek plane, then would exist a plane of $\PG(3, q)$ tangent to two distinct elliptic quadrics of $\cF$, contradicting {\em iii)} of Lemma \ref{properties}.
\end{proof}

From Lemma \ref{k-system} and {\em iv)} of Lemma \ref{properties}, we have the following result.

\begin{cor}
$\cQ^+(5,q)$, $q$ odd, has a $2m$--regular system w.r.t. points for all $m$, $1 \le m \le (q+1)$.
\end{cor}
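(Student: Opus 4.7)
The plan is to exhibit the desired $2m$--regular system as a disjoint union of $m$ simple building blocks, each arising from one of the $q+1$ distinguished $1$--systems $\cS_0, \ldots, \cS_q$ of $\cQ^+(5,q)$ produced (via the Klein correspondence) by the Ebert partition $\{\cE_0, \ldots, \cE_q\}$ of $\PG(3,q)$ into elliptic quadrics.

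First I would apply Lemma \ref{k-system} to each $\cS_i$ with $\cP_{d, e} = \cQ^+(5,q)$, so $d=3$, $e=0$, $k=1$. This yields, for every $i \in \{0, \ldots, q\}$, a regular system $\cG_i$ consisting of all planes of $\cQ^+(5,q)$ that contain a line of $\cS_i$, with parameter $|\cM_{\cP_{1, 0}}|$. Since $\cP_{1, 0} = \cQ^+(1,q)$ consists of a pair of points, we have $|\cM_{\cP_{1, 0}}| = 2$, so each $\cG_i$ is a $2$--regular system with respect to points.

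Next I would verify that the $\cG_i$ are pairwise disjoint, which is exactly the content needed from Theorem \ref{klein}. If some plane $\pi$ belonged to both $\cG_i$ and $\cG_j$ with $i \ne j$, then $\pi$ would contain a line of $\cS_i$ and a line of $\cS_j$, in direct contradiction with the conclusion of Theorem \ref{klein}.

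Finally, I would invoke item iv) of Lemma \ref{properties} iteratively: for any $m$ with $1 \le m \le q+1$, the union of any $m$ of the pairwise disjoint $2$--regular systems $\cG_0, \ldots, \cG_q$ is a $2m$--regular system w.r.t. points, giving the full range of values claimed. All substantial work was absorbed into Theorem \ref{klein} and the preceding Ebert and Shult--Thas results, so this step is essentially bookkeeping; the only small subtlety to double--check is the identification of $|\cM_{\cQ^+(1,q)}|$ as $2$, which fixes the multiplier correctly.
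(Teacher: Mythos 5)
Your proposal is correct and follows exactly the paper's intended argument: the paper derives this corollary directly from Lemma \ref{k-system} (giving each $\cG_i$ as a $|\cM_{\cP_{1,0}}| = 2$--regular system w.r.t.\ points), Theorem \ref{klein} (pairwise disjointness of the $\cG_i$), and item \emph{iv)} of Lemma \ref{properties} (taking unions of $m$ of the $q+1$ disjoint $2$--regular systems). Your identification of the multiplier $|\cM_{\cQ^+(1,q)}| = 2$ is the right bookkeeping check and matches the paper.
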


\begin{remark}
From {\em iv)} of Lemma \ref{ebert}, it follows that the $(q+1)(q^2+1)/2$ lines of $\cup_{i = 0}^{(q-1)/2} \cS_{2i}$ are pairwise disjoint. Similarly for the lines of $\cup_{i = 0}^{(q-1)/2} \cS_{2i + 1}$. Moreover these two sets of lines cover the same points.
\end{remark}

\subsection{Regular systems of $\cQ(6, 3)$}

Let $\cQ(6, q)$ be a parabolic quadric of $\PG(6, q)$ and let $\PGO(7, q)$ be the group of projectivities of $\PG(6, q)$ leaving invariant $\cQ(6, q)$. A $1$--system of $\cQ(6, q)$ is a set $\cS$ of $q^3+1$ lines of $\cQ(6, q)$ such that no plane of $\cQ(6, q)$ through a line of $\cS$ has a point in common with the remaining lines of $\cS$. From Lemma \ref{k-system}, the planes of $\cQ(6, q)$ containing a line of $\cS$ form a $(q+1)$--regular system w.r.t. points. If $\cS$ is the union of $q^2-q+1$ reguli, let $\cS^o$ be the $1$--system obtained by taking the $q^2-q+1$ opposite reguli of $\cS$. In this case the previous regular system can be obtained twice. 

\begin{prop}
If $\cS$ is a $1$--system of $\cQ(6, q)$ obtained from the union of $q^2-q+1$ reguli, then the set of planes of $\cQ(6, q)$ containing a line of $\cS \cup \cS^o$ is a $2(q+1)$--regular system w.r.t. points of $\cQ(6, q)$.
\end{prop}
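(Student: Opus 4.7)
The plan is to combine Lemma \ref{k-system} (applied to both $\cS$ and $\cS^o$) with the additive property in item iv) of Lemma \ref{properties}. Since both $\cS$ and $\cS^o$ are $1$--systems of $\cQ(6,q) = \cP_{3,1}$, Lemma \ref{k-system} with $d = 3$ and $k = 1$ (where $|\cM_{\cP_{1,1}}| = q+1$) will at once give that the set $\cG$ of planes of $\cQ(6,q)$ through a line of $\cS$, and the set $\cG^o$ of planes of $\cQ(6,q)$ through a line of $\cS^o$, are each a $(q+1)$--regular system w.r.t. points of $\cQ(6,q)$. The task then reduces to proving $\cG \cap \cG^o = \emptyset$, after which item iv) of Lemma \ref{properties} yields the conclusion.

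To establish $\cG \cap \cG^o = \emptyset$ I would write $\cS = \bigcup_i \cR_i$ and $\cS^o = \bigcup_i \cR_i^o$, where $\cR_i$ and $\cR_i^o$ are the two rulings of the hyperbolic quadric $\Sigma_i \cap \cQ(6,q)$ for some $3$--dimensional subspace $\Sigma_i \subset \PG(6,q)$. Suppose for contradiction that a plane $\pi \subset \cQ(6,q)$ contains $\ell \in \cR_i$ and $\ell' \in \cR_j^o$. If $i = j$, then $\pi = \langle \ell, \ell' \rangle \subseteq \Sigma_i$ and hence $\pi \subseteq \Sigma_i \cap \cQ(6,q)$, which is impossible since a hyperbolic $3$--quadric in $\PG(3,q)$ contains no plane. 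If $i \ne j$, the line $\ell' \in \cR_j^o$ meets every line of the opposite ruling $\cR_j$ in exactly one point, so $\pi \supseteq \ell'$ meets every line of $\cR_j$; on the other hand, the $1$--system property of $\cS$ applied to $\ell \in \pi$ forces $\pi$ to be disjoint from every line of $\cS \setminus \{\ell\}$, and in particular from every line of $\cR_j$ (since $\cR_i$ and $\cR_j$ are distinct and hence disjoint reguli of $\cS$), a contradiction.

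Once $\cG$ and $\cG^o$ are shown to be disjoint $(q+1)$--regular systems w.r.t. points, item iv) of Lemma \ref{properties} immediately gives that their union---which is exactly the set of planes of $\cQ(6,q)$ containing a line of $\cS \cup \cS^o$---is a $2(q+1)$--regular system w.r.t. points of $\cQ(6,q)$, as desired.

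The main difficulty I anticipate is the $i \ne j$ case of the disjointness argument: it hinges on the interplay between the incidence property of a regulus with its opposite (each line of $\cR_j^o$ hits all $q+1$ lines of $\cR_j$) and the $1$--system axiom of $\cS$ (a plane through a line of $\cS$ must avoid all other lines of $\cS$, hence the entire regulus $\cR_j$). The $i=j$ case is purely a dimension argument, and the two applications of Lemma \ref{k-system} are immediate once $\cS^o$ is accepted as a $1$--system, as stated in the preceding paragraph.
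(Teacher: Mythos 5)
Your proposal is correct and follows essentially the same route as the paper: both reduce to showing that no generator contains a line of $\cS$ together with a line of $\cS^o$ and then invoke item $iv)$ of Lemma \ref{properties} (the paper dispatches the disjointness in one line, noting that such a generator would force two lines of $\cS$ to meet, which is exactly the content of your $i \ne j$ case; your $i = j$ case is the small extra detail the paper leaves implicit). No gaps.
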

\begin{proof}
By construction $\cS^o$ is a $1$--system of $\cQ(6, q)$ and $|\cS \cap \cS^o| = 0$. Moreover a generator of $\cQ(6, q)$ containing a line of $\cS$ contains no line of $\cS^o$, otherwise there would be two lines of $\cS$ meeting in one point, a contradiction. Therefore from Lemma \ref{properties}, {\em iv)}, the set of generators of $\cQ(6, q)$ containing a line of $\cS \cup \cS^o$ is a $2(q+1)$--regular system w.r.t. points of $\cQ(6, q)$.
\end{proof}

Below we give a construction of a $1$--system of $\cQ(6, 3)$ that is the union of $7$ reguli and it is not contained in an elliptic hyperplane of $\cQ(6, 3)$. The group $\PGO(7, 3)$ has two orbits on points of $\PG(6, 3)$ not on $\cQ(6, 3)$, i.e., the set $I$ of internal points and the set $E$ of external points. 
Let $\perp$ be the orthogonal polarity of $\PG(6, 3)$ defining $\cQ(6, 3)$. If $P \in \PG(6, 3) \setminus \cQ(6, 3)$, we have that $P^\perp \cap \cQ(6, 3)$ is an elliptic quadric $\cQ^-(5, 3)$ or a hyperbolic quadric $\cQ^+(5, 3)$, according as $P$ belongs to $I$ or $E$, respectively.

\begin{construction}\label{sys_para}
Let $\cX = \{P_1, \dots, P_7\} \subset I$ such that $\cX$ is a self--polar simplex of $\cQ(6, 3)$, i.e., $P_i^\perp= \langle P_j: \;\; 1 \le j \le 7,  i \ne j\rangle$, with $1 \le i \le 7$. A line containing two points of $\cX$ is external to $\cQ(6, 3)$, a plane $\sigma$ spanned by three points of $\cX$ meets $\cQ(6, 3)$ in a non--degenerate conic and $\sigma^\perp \cap \cQ(6, 3)$ is a hyperbolic quadric $\cQ^+(3, 3)$. Let $\pi = \langle P_1, P_2, P_3 \rangle$ and consider the following $7$ lines: $r_1 = \langle P_1, P_2 \rangle$, $r_2 = \langle P_2, P_3 \rangle$, $r_3 = \langle P_3, P_1 \rangle$, $\ell_1 = \langle P_4, P_5 \rangle$, $\ell_1' = \langle P_6, P_7 \rangle$, $\ell_2 = \langle P_4, P_7 \rangle$, $\ell_2' = \langle P_5, P_6 \rangle$, $\ell_3 = \langle P_4, P_6 \rangle$, $\ell_3' = \langle P_5, P_7 \rangle$. Let $\varphi$ be a permutation of $\{1, 2, 3\}$. Let $\cR_i$ be one of the two reguli of the hyperbolic quadric $\langle r_i, \ell_{\varphi(i)} \rangle \cap \cQ(6, 3)$, $\cR_i'$ one of the two reguli of $\langle r_i, \ell_{\varphi(i)}' \rangle \cap \cQ(6, 3)$ and $\cR$ one of the two reguli of $\pi^\perp \cap \cQ(6, 3)$. Set $\cS = \left(\bigcup_{i = 1}^{3} \cR_i \cup \cR_i'\right) \cup \cR$.
\end{construction}

\begin{prop}
The set $\cS$ is a $1$--system of $\cQ(6, 3)$.
\end{prop}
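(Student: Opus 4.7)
The plan is to verify the two defining conditions: $|\cS| = q^3 + 1 = 28$ and, for every pair of distinct $m, m' \in \cS$, no plane of $\cQ(6,3)$ through $m$ meets $m'$. Since the lines of $\cS$ will turn out to be pairwise disjoint and contained in $\cQ(6,3)$, the second condition is equivalent to $m^\perp \cap m' = \emptyset$. I split the argument into counting, same-regulus pairs, and different-regulus pairs.

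For counting, the key structural observation is that the seven solids carrying our reguli are each spanned by four of the points $P_i$, and the complementary three-element index sets are precisely the seven lines of a Fano plane on $\{1,\dots,7\}$ (a direct check, independent of the permutation $\varphi$). Since two distinct Fano lines meet in exactly one point, any two of these solids intersect in a projective line spanned by two of the $P_i$, which is external to $\cQ(6,3)$. It follows that the seven hyperbolic quadrics $\Sigma \cap \cQ(6,3)$ are pairwise disjoint, hence the reguli are line-disjoint and $|\cS| = 7(q+1) = 28$.

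If $m, m'$ lie in a common regulus on $\Sigma \cap \cQ(6,3)$, then $m^\perp \cap \Sigma = m$ (as $m$ is maximal totally singular in the non-degenerate form on $\Sigma$), so $m^\perp \cap m' = m \cap m' = \emptyset$. In the main case, let $m \subset \Sigma_L \cap \cQ(6,3)$ and $m' \subset \Sigma_{L'} \cap \cQ(6,3)$ with $L \ne L'$ Fano lines. Using the direct-sum decomposition $m^\perp = \Sigma_L^\perp \oplus m$ (valid because $m$ is maximal isotropic in the non-degenerate $\Sigma_L$) and writing $P = A + B \in m^\perp \cap \Sigma_{L'}$ with $A \in \Sigma_L^\perp$, $B \in m$, a short calculation in the simplex coordinates forces $B \in m \cap (\Sigma_L \cap \Sigma_{L'})$; but $\Sigma_L \cap \Sigma_{L'} = \langle P_k : k \notin L \cup L'\rangle$ is an external line to $\cQ(6,3)$, whence $B = 0$ and
\[
m^\perp \cap \Sigma_{L'} = \Sigma_L^\perp \cap \Sigma_{L'} = \langle P_i : i \in L \setminus L'\rangle.
\]
This last line is again spanned by two internal points, so it too is external to $\cQ(6,3)$. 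Combined with $m' \subset \cQ(6,3)$, we obtain $m^\perp \cap m' \subseteq \Sigma_L^\perp \cap \Sigma_{L'} \cap \cQ(6,3) = \emptyset$. The main obstacle is recognizing the Fano plane structure underlying the seven solids; once this is in place, both the count and the across-reguli perp computation reduce to straightforward linear algebra built on the self-polarity of $\cX$ and the fact that lines spanned by pairs of the $P_i$ are external to $\cQ(6,3)$.
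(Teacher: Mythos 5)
Your proof is correct, and it rests on the same combinatorial backbone as the paper's — namely that any two of the seven solids carrying the reguli meet exactly in a line joining two points of the self--polar simplex $\cX$, hence in a line external to $\cQ(6,3)$ — but the two arguments diverge after that point. The paper simply asserts this intersection property ``by construction'' and then concludes in one stroke: the span $\langle T_1, T_2\rangle$ of two such solids contains six points of $\cX$, so its polar image is the seventh (internal) point and $\langle T_1,T_2\rangle\cap\cQ(6,3)$ is an elliptic quadric $\cQ^-(5,3)$; two lines of $\cS$ in distinct solids are then disjoint generators of this $\cQ^-(5,3)$, and the $1$--system condition follows from the fact that for a generator $g$ of a rank--two elliptic quadric one has $g^{\perp}\cap\cQ^-(5,3)=g$ (a step the paper leaves implicit, as it does the same--regulus case and the count $|\cS|=28$). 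You instead make the Fano--plane structure of the complementary index triples explicit — which is a nice way to certify, independently of $\varphi$, that any two solids share exactly two simplex points — and then bypass the elliptic quadric altogether by computing $m^{\perp}\cap\Sigma_{L'}=\Sigma_L^{\perp}\cap\Sigma_{L'}=\langle P_i : i\in L\setminus L'\rangle$ directly from the decomposition $m^{\perp}=\Sigma_L^{\perp}\oplus m$ and the support argument in simplex coordinates; since that line is external, $m^{\perp}\cap m'=\emptyset$. The paper's route is shorter and more conceptual (one recognizes the ambient $\cQ^-(5,3)$ once and for all), while yours is more self--contained: it spells out the counting, treats the same--regulus case explicitly via $m^{\perp}\cap\Sigma=m$ on $\cQ^+(3,3)$, and reduces the cross--solid case to linear algebra over the simplex without invoking the classification of the quadric section of $\langle T_1,T_2\rangle$. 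Both are complete and correct.
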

\begin{proof}
It is enough to show that the $5$--space generated by two $3$--spaces $T_1$, $T_2$, containing two of the seven reguli of $\cS$ meets $\cQ(6, 3)$ in an elliptic quadric $\cQ^-(5, 3)$. By construction, $T_{1}$ and $T_{2}$ meet in a line containing two points of $\cX$. Therefore, $\langle T_{1}, T_{2} \rangle$ is a $5$--space, containing six points of $\cX$ and hence $\langle T_{1}, T_{2} \rangle^{\perp}$ is the remaining point of $\cX$, that is an internal point of $\cQ(6, 3)$. It follows that $\langle T_{1},T_{2} \rangle \cap \cQ(6, 3)$ is an elliptic quadric $\cQ^{-}(5, 3)$. 
\end{proof}
Let $\cS^o = \left(\bigcup_{i = 1}^{3} \cR_i^o \cup \cR_i'^o\right) \cup \cR^o$, where $\cR_i^o$, $\cR_i'^o$ and $\cR^o$ are the opposite regulus of $\cR_i$, $\cR_i'$ and $\cR$, respectively. Then the set of generators of $\cQ(6, 3)$ containing a line of $\cS \cup \cS^o$ is an $8$--regular system of $\cQ(6, 3)$ w.r.t. points.

Some computations performed with Magma \cite{magma} show that, up to projectivities, there are two distinct $1$--systems of $\cQ(6, 3)$ arising from Construction \eqref{sys_para}. The former is stabilized by a subgroup of $\PGO(7, 3)$ of order $1344$ isomorphic to $2^3 \cdot \PSL(3, 2)$ and it acts transitively on members of $\cS$. Hence in this case $\cS$ is the $1$--system described in \cite[Section 3.4]{BP}, \cite{DHV}. The latter is stabilized by a subgroup of $\PGO(7, 3)$ of order $192$ that is not transitive on lines of $\cS$. To the best of our knowledge it is new.

\begin{prob}
Construct a $1$--system of $\cQ(6, q)$ that is the union of $q^2-q+1$ reguli and it is not contained in an elliptic hyperplane of $\cQ(6, q)$.
\end{prob}

\section{Regular systems arising from field reduction}\label{sec_5}

Let $V$ be a vector space of dimension $N$ over the field $\GF(q^r)$. Since $\GF(q) \subset \GF(q^r)$, we have that $V$ can be seen as a vector space of dimension $rN$ over the field $\GF(q)$. This means that there exists a bijection $\phi$ between the vectors of $V$ and $V'$ which maps $k$--dimensional  subspaces of $V$ to certain $kr$--dimensional  subspaces of $V'$. In this setting if $U$ denotes the set of singular vectors (totally singular subspaces) of a non--degenerate quadratic form $Q$ on $V$ or isotropic vectors (totally isotropic subspaces) of a non--degenerate reflexive sesquilinear form $\beta$ on $V$, then $\phi(U) = \{\phi(u) \;\; | \;\; u \in U\}$ are vectors (subspaces) of $V'$ which could be either singular (totally singular) with respect to a quadratic form of $V'$ or isotropic (totally isotropic) with respect to a reflexive sesquilinear form on $V'$. Moreover, if $f$ is a semilinear map of $V$, then there exists a unique semilinear map $f'$ of $V'$ such that $\phi f = f' \phi$. Analagously, a semi--similarity of $Q$ or $\beta$ gives rise to a semi--similarity of the corresponding form of $V'$. Hence there is a natural embedding of a subgroup of $\GaL(V)$ in $\GaL(V')$ and of a subgroup of semi--similarities of $Q$ or $\beta$ in the group of semi--similarities of the corresponding form of $V'$. The subgroups of $\GaL(V')$ arising in this way belong to the third--class described by Aschbacher \cite{A, K}.

From a projective point of view $\phi$ maps $(k-1)$--spaces of $\PG(V)$ to certain $(kr-1)$--spaces of $\PG(V')$ and if $\cP$ denotes the non--degenerate polar space of $\PG(V)$ associated with $Q$ or $\beta$, then $\phi$ sends $(k-1)$--spaces of $\cP$ to certain $(kr-1)$--spaces of a polar space $\cP'$ of $\PG(V')$. The map $\phi$ is usually called {\em field reduction map} and it has been widely studied by many authors \cite{G, LV, V}. Indeed, it is the tool which allows to construct Desarguesian $m$--spreads of $\PG(V')$ \cite{Segre} or $\cP'$ \cite{D1,D2,D3,D,D4,D5} and ``classical'' $m$-systems of $\cP'$, see \cite{ST, ST1}.

The content of this section is based on the following observation. If $G$ is a group of collineations of $\cP'$ acting transitively on points of $\cP'$ and $\cO$ is an orbit on the generators of $\cP'$ under the action of $G$, then then through each point of $\cP'$ there will be a constant number of elements of $\cO$, i.e., $\cO$ is a regular system w.r.t. points of $\cP'$. See also \cite{BE}. On the other hand, if $\cD$ is a Desarguesian $m$--spread of $\cP'$, by Witt's theorem it follows that there exists a group of projectivities of $\cP'$, say $G_{\cD}$, preserving $\cD$ and acting transitively on the points of $\cP'$. Therefore every orbit of $G_{\cD}$ on the generators of $\cP'$ is a regular system w.r.t. points of $\cP'$. In particular in order to determine regular systems of $\cP'$ w.r.t. points arising in this way, a comprehensive study of the action of $G_{\cD}$ on the generators of $\cP'$ is needed. 

Here we focus on the case when $\cP$ is a non--degenerate Hermitian variety $\cH(N-1, q^2)$ of $\PG(N-1, q^2)$ and $\cP'$ is a non--degenerate quadric $\cQ$ of $\PG(2N-1, q)$ with a Desarguesian line--spread $\cL_1$. In the projective orthogonal group of $\cQ$ there are two subgroups isomorphic to $\SU(N, q^2)/\langle -I \rangle$ and $\U(N, q^2)/\langle -I \rangle$ preserving $\cL_1$. The action of these two subgroups on generators of $\cQ$ is determined.

Some preliminary results of the following sections overlap the content of other papers (see for instance \cite{D}); however we prefer to include here a proof for the convenience of the reader.

\subsection{Subgeometries embedded in $\cH(2n-1, q^2)$}

Let $\cH(2n-1, q^2)$ be the non--degenerate Hermitian variety of $\PG(2n-1, q^2)$ associated with the Hermitian form on $V(2n, q^2)$ given by
$$
\begin{pmatrix} X_1, \dots, X_{2n} \end{pmatrix} J \begin{pmatrix} Y_1^q \dots Y_{2n}^q \end{pmatrix}^t,
$$
where $J^q = J^t$ and let $\perp$ be the corresponding Hermitian polarity of $\PG(2n-1, q^2)$. Consider the following groups of unitary isometries
\begin{align*}
\U(2n, q^2) = \{M \in \GL(2n, q^2) \;\; | \;\; M^t J M^q = J\}, \\
\SU(2n, q^2) = \{M \in \SL(2n, q^2) \;\; | \;\; M^t J M^q = J\},
\end{align*}
and the groups of projectivities induced by them, namely $\PU(2n, q^2)$ and $\PSU(2n, q^2)$, respectively. Let $\Sigma$ be a Baer subgeometry of $\PG(2n-1, q^2)$ and let $\tau$ be the Baer involution of $\PG(2n-1, q^2)$ fixing pointwise $\Sigma$. We say that $\cH(2n-1, q^2)$ {\em induces a polarity on $\Sigma$} if $\perp$, when restricted on $\Sigma$, defines a polarity of $\Sigma$, or equivalently if $P^\perp \cap \Sigma$ is a hyperplane of $\Sigma$, for every point $P \in \Sigma$.

\begin{lemma}
$\cH(2n-1, q^2)$ induces a polarity on $\Sigma$ if and only if $\tau$ fixes $\cH(2n-1, q^2)$.
\end{lemma}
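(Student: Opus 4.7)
The plan is to reduce both sides of the biconditional to the same matrix identity on $J$, namely $J^t = \mu J$ with $\mu \in \{\pm 1\}$.

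First I rephrase ``$\cH(2n-1,q^2)$ induces a polarity on $\Sigma$''. By Galois descent, a subspace $W$ of $\PG(2n-1,q^2)$ is fixed setwise by $\tau$ if and only if $W \cap \Sigma$ has the same projective dimension as $W$; hence the hypothesis is equivalent to asking that $P^\perp$ be $\tau$--invariant for every $P \in \Sigma$. Representing a hyperplane $\{y : y^t u = 0\}$ by its covector $u$, the polar $P^\perp$ corresponds to $JP^\sigma$, which for $P \in \Sigma$ (where $P^\sigma = P$) equals $JP$; applying $\tau$ to the defining equation $y^tJP=0$ and using the Hermitian condition $J^\sigma = J^t$ shows that $\tau(P^\perp)$ is represented by $J^tP$. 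Thus the hypothesis reduces to the statement that $J^tP$ is a $\GF(q^2)^*$--multiple of $JP$ for every $P$ in $V_0 := \GF(q)^{2n}$.

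Next I promote this pointwise proportionality to a global matrix identity. For two $\GF(q)$--linearly independent vectors $P_1,P_2 \in V_0$, they remain $\GF(q^2)$--independent in $V$, and so do $JP_1, JP_2$ since $J$ is invertible; expanding $J^t(P_1+P_2)$ in two ways then forces the pointwise scalars at $P_1$, $P_2$ and $P_1+P_2$ to coincide, and a short bootstrap (including rescaling by $\GF(q)^*$) makes the scalar uniform on $V_0 \setminus \{0\}$. Since $V_0$ contains a $\GF(q^2)$--basis of $V$, this gives $J^t = \mu J$ for a single $\mu \in \GF(q^2)^*$; transposing yields $\mu^2 = 1$, hence $\mu = \pm 1 \in \GF(q)^*$.

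Finally I match this with ``$\tau$ fixes $\cH(2n-1,q^2)$''. A direct computation gives $h(P^\sigma,P^\sigma) = P^tJ^tP^\sigma$, so $\tau(\cH(2n-1,q^2))$ is the Hermitian variety with Gram matrix $J^t$. Because a non--degenerate Hermitian form in dimension at least two is determined by its variety up to a scalar in $\GF(q)^*$, the equality $\tau(\cH(2n-1,q^2)) = \cH(2n-1,q^2)$ is equivalent to the very same identity $J^t = \mu J$ for some $\mu \in \GF(q)^*$ (again with $\mu^2 = 1$ after transposing), and the biconditional follows. The main obstacle I anticipate is the linearity argument in the second paragraph: the hypothesis only supplies pointwise proportionality $J^tP \parallel JP$ on $V_0$, and one must argue that the proportionality constant is uniform. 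The key ingredient is the descent observation that $\GF(q)$--independence of two vectors in $V_0$ implies $\GF(q^2)$--independence in $V$, which together with the invertibility of $J$ makes the expansion argument go through.
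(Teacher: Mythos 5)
Your proof is correct, and its overall strategy coincides with the paper's: normalize to the canonical Baer subgeometry (so that $\tau$ is the Frobenius collineation) and reduce both sides of the biconditional to the matrix identity $J^t=\pm J$. In the forward direction you are in fact more complete than the paper, which simply asserts that the induced polarity of $\Sigma$ ``is defined by the form $XJY^t$'' and jumps to $J^q=J^t=\pm J$; your pointwise proportionality $J^tP\parallel JP$ on $\GF(q)^{2n}$, promoted to a global identity via the observation that $\GF(q)$--independent vectors of the real subspace stay $\GF(q^2)$--independent, supplies exactly the step the paper leaves implicit. The converse is where the two arguments genuinely diverge: the paper does not normalize $\Sigma$ there, but writes $\tau$ as a projectivity $M$ composed with Frobenius, extracts $M^tJM^q=\lambda J^q$ from the hypothesis, rescales $M$, and uses $MM^q=\sigma I$ to verify directly that $P^\perp$ is $\tau$--invariant for $P\in\Sigma$; you instead stay in the canonical frame and invoke the fact that a non--degenerate Hermitian variety (in vector dimension at least two) determines its Gram matrix up to a scalar in $\GF(q)\setminus\{0\}$, so that ``$\tau$ fixes $\cH(2n-1,q^2)$'' becomes the same identity $J^t=\mu J$ with $\mu^2=1$. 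Your version is more symmetric and avoids the computation with $M$, at the cost of explicitly using the variety--determines--form fact (which the paper also uses, implicitly, when it writes $M^tJM^q=\lambda J^q$). Two small points to make explicit: the reduction to the canonical $\Sigma$ should be justified once for the whole biconditional (conjugating by a projectivity carrying $\Sigma$ to the canonical subgeometry turns $\tau$ into Frobenius, carries $\cH(2n-1,q^2)$ to another non--degenerate Hermitian variety, and preserves both conditions), since you use $P^\sigma=P$ throughout; and the passage from ``$P^\perp\cap\Sigma$ is a hyperplane of $\Sigma$ for all $P$'' back to ``induces a polarity'' is licensed by the paper's own definition, so no extra argument is needed there.
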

\begin{proof}
Suppose first that $\cH(2n-1,q^2)$ induces a polarity on $\Sigma$. Since $\PGL(2n-1, q^2)$ is transitive on the Baer subgeometries of $\PG(2n-1, q^2)$, we may assume without loss of generality that $\Sigma$ is the canonical subgeometry of $\PG(2n-1,q^2)$. Hence $\cH(2n-1, q^2)$ is associated with the Hermitian form $\begin{pmatrix} X_1, \dots, X_{2n} \end{pmatrix} J \begin{pmatrix} Y_1^q \dots Y_{2n}^q \end{pmatrix}^t$, whereas the polarity of $\Sigma$ is defined by the form $\begin{pmatrix} X_1, \dots, X_{2n} \end{pmatrix} J \begin{pmatrix} Y_1 \dots Y_{2n} \end{pmatrix}^t$. Thus $J^q = J^t = \pm J$. Since $I^t J I = \pm J^q$, $\tau$ fixes $\cH(2n-1,q^2)$.

On the other hand, let $\tau$ be the composition of the projectivity represented by a matrix $M \in \GL(2n,q^2)$ with the non--linear involution sending the point $P$ to the point $P^q$. Since $\tau$ fixes $\cH(2n-1,q^2)$, we have that $M^t J M^q = \lambda J^q$, for some $\lambda \in \GF(q) \setminus \{0\}$. Replacing $M$ with $a M$, where $a^{q+1} = \lambda$, we have that $M^t J M^q = J^q$. We claim that $J M^q = M^{-t} J^q$ defines a polarity of $\Sigma$. Indeed, since $\tau$ is an involution, then $M M^q = \sigma I$, for some $\sigma \in \GF(q^2) \setminus \{0\}$ and
$$
\sigma J (P^q)^t = J (M (P^q)^t)^q = J M^q P^t = M^{-t} J^q  {P}^t = M^{-t} (J (P^q)^t)^q.
$$
This means that $P^\perp = (P^\tau)^\perp = (P^\perp)^\tau$. In particular $P^\perp$ is fixed by $\tau$ and hence $P^\perp$ is a hyperplane of $\Sigma$, as required.
\end{proof}

Note that if $\cH(2n-1, q^2)$ induces a polarity on $\Sigma$, then $\perp|_{\Sigma}$ is either symplectic or pseudo--symplectic if $q$ is even and is either symplectic or orthogonal if $q$ is odd. Indeed, with the same notation used above, if $M M^q = \sigma I$ and $M^t J M^q = J^q$, then $M M^q = M J^{-1} M^{-t} J^q = \sigma I$ and hence $J M^{-1} J^{-t} M^t = \sigma I$. Thus $M J^{-1} = \sigma (J^{q})^{-1} M^t$ and $J^{-t} M^t = \sigma M J^{-1}$. Therefore $M J^{-1} = \sigma (J^{q})^{-1} M^t = \sigma J^{-t} M^t = \sigma (\sigma M J^{-1}) = \sigma^2 M J^{-1}$ and $\sigma^2 = \pm 1$. The polarity $\perp|_{\Sigma}$ of $\Sigma$, when extended in $\PG(2n-1, q^2)$ gives rise to a polarity $\perp'$ of $\PG(2n-1, q^2)$ associated with the non--degenerate bilinear form given by $\begin{pmatrix} X_1, \dots, X_{2n} \end{pmatrix} J M^q \begin{pmatrix} Y_1 \dots Y_{2n} \end{pmatrix}^t$ such that
\begin{itemize}
\item[{\em i)}] $\perp'$ is of the same type as $\perp|_{\Sigma}$,
\item[{\em ii)}] $\perp \perp' = \perp' \perp = \tau$.
\end{itemize}
The polarity $\perp'$ is said to be {\em permutable with $\perp$}, see also \cite{Segre1}. We say that $\Sigma$ {\em is embedded in} $\cH(2n-1, q^2)$ if $P \in \cH(2n-1, q^2)$, for every point $P \in \Sigma$.

\begin{lemma}
$\Sigma$ is embedded in $\cH(2n-1, q^2)$ if and only if $\cH(2n-1, q^2)$ induces a symplectic polarity on $\Sigma$.
\end{lemma}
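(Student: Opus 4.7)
The plan is to prove both directions by a direct coordinate computation, using the previous lemma's setup.

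For the easy direction ($\Leftarrow$), suppose $\cH(2n-1,q^2)$ induces a symplectic polarity $\perp|_{\Sigma}$ on $\Sigma$. A symplectic polarity is null: every point $P$ of $\Sigma$ lies in its own polar hyperplane $P^{\perp|_{\Sigma}} = P^\perp \cap \Sigma$. In particular $P \in P^\perp$, and this is exactly the condition $P \in \cH(2n-1,q^2)$. Hence $\Sigma \subseteq \cH(2n-1,q^2)$, i.e., $\Sigma$ is embedded.

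For the converse ($\Rightarrow$), since $\PGL(2n,q^2)$ is transitive on Baer subgeometries, I may assume $\Sigma$ is the canonical subgeometry and $\tau$ is the coordinate-wise $q$-th power map. The hypothesis $\Sigma \subseteq \cH(2n-1,q^2)$ translates into $X^t J X = 0$ for every $X \in V(2n,q)$, since $X^q = X$ for such $X$. From this I must extract that $\perp$ restricted to $\Sigma$ is well defined and symplectic, splitting into two cases according to the characteristic.

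If $q$ is odd, polarizing the identity $X^tJX = 0$ over $\GF(q)$ gives $X^t(J+J^t)Y = 0$ for all $X,Y \in V(2n,q)$, hence $J^t = -J$. Combining with $J^q = J^t$ yields $J^q = -J$, so every entry of $J$ is of the form $\omega\lambda$ with $\lambda \in \GF(q)$ and a fixed $\omega \in \GF(q^2)$ satisfying $\omega^q = -\omega$. Write $J = \omega J'$ with $J'$ a non-singular skew-symmetric matrix over $\GF(q)$. For $X,Y \in V(2n,q)$ one has $X^tJY^q = \omega X^tJ'Y$, so $P^\perp \cap \Sigma$ is the hyperplane of $\Sigma$ cut out by the alternating form with matrix $J'$, and this is the desired symplectic polarity. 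If $q$ is even, expanding $X^tJX = \sum_i J_{ii}X_i^2 + \sum_{i<j}(J_{ij}+J_{ji})X_iX_j$ and varying $X$ over $\GF(q)^{2n}$ forces $J_{ii}=0$ and $J_{ij}=J_{ji}$, i.e., $J^t = J$ with zero diagonal. Then $J^q = J^t = J$ means $J$ already has entries in $\GF(q)$, and $X^tJY$ defines the required symplectic polarity on $\Sigma$.

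I expect the characteristic-$2$ case to be the only delicate point: there one cannot simply polarize the quadratic form to recover it, so both the off-diagonal symmetry and the vanishing of the diagonal have to be extracted separately from the hypothesis $X^tJX = 0$ on $\GF(q)$-rational vectors. Once this dichotomy is handled, the conclusion that the induced polarity is symplectic (rather than orthogonal or pseudo-symplectic, which are the other possibilities mentioned after the previous lemma) is immediate from the explicit shape of the defining matrix.
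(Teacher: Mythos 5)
Your proof is correct, but it follows a genuinely different route from the paper's. For the forward direction the paper argues synthetically: if $\Sigma\subset\cH(2n-1,q^2)$ and $P\in\cH(2n-1,q^2)\setminus\Sigma$, the unique line through $P$ meeting $\Sigma$ in a Baer subline already carries $q+2$ points of the Hermitian variety and is therefore totally isotropic, so it also contains $P^\tau$; hence $\tau$ fixes $\cH(2n-1,q^2)$, the preceding lemma yields an induced polarity, and that polarity is null (every point of $\Sigma$ is absolute), hence symplectic. The reverse direction is, as in your write-up, just the observation that a symplectic polarity is null. You instead bypass the preceding lemma entirely and work with the Gram matrix after normalizing $\Sigma$ to the canonical subgeometry, extracting $J^t=-J$ (odd $q$) or $J^t=J$ with zero diagonal (even $q$) from the identical vanishing of $XJX^t$ on $\GF(q)^{2n}$ and then exhibiting the alternating $\GF(q)$-form explicitly. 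The paper's argument is shorter and characteristic-free, resting on the standard fact that a polarity all of whose points are absolute is symplectic; yours is more self-contained and has the advantage of producing the permutable symplectic matrix $J'=\omega^{-1}J$ explicitly, which is essentially the object the paper goes on to use in the discussion of permutable polarities. The only delicate point in your version is the one you flag yourself: in characteristic $2$ the coefficients $J_{ii}$ and $J_{ij}+J_{ji}$ must be read off from a quadratic form vanishing on $\GF(q)$-rational points, and for $q=2$ the monomials $X_i^2$ and $X_i$ coincide as functions, so one should note that evaluating at $e_i$ and $e_i+e_j$ still recovers both families of coefficients; with that remark the argument is complete.
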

\begin{proof}
If $\Sigma$ is embedded in $\cH(2n-1, q^2)$ and $P$ is a point of $\cH(2n-1, q^2)$ with $P \notin \Sigma$, then there exists a unique line $\ell$ of $\PG(2n-1, q^2)$ such that $P \in \ell$ and $|\ell \cap \Sigma| = q+1$. Moreover $P^\tau \in \ell$ and $P^\tau \in \cH(2n-1, q^2)$, since $\ell$ is a line of $\cH(2n-1, q^2)$. Therefore $\tau$ fixes $\cH(2n-1, q^2)$, $\cH(2n-1, q^2)$ induces a polarity on $\Sigma$ and such a polarity has to be symplectic since $\Sigma \subset \cH(2n-1, q^2)$. Viceversa if $\cH(2n-1, q^2)$ induces a symplectic polarity on $\Sigma$, then $\Sigma$ is embedded in $\cH(2n-1, q^2)$.
\end{proof}

\begin{prop}
There are $q^{n^2-n} \prod_{i = 2}^{n} (q^{2i-1} + 1)$ Baer subgeometries embedded in $\cH(2n-1, q^2)$.
\end{prop}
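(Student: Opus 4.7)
The plan is to apply the orbit--stabilizer theorem to the natural action of $G=\PGU(2n,q^2)$ on the set $\cB$ of Baer subgeometries embedded in $\cH(2n-1,q^2)$. For transitivity, given two embedded subgeometries $\Sigma_1,\Sigma_2$ with associated permutable symplectic polarities $\perp'_1,\perp'_2$, I pick a symplectic basis $B_i$ of $(\Sigma_i,\perp'_i)$ for $i=1,2$; regarded as an ordered $\GF(q^2)$--basis of the ambient vector space $V(2n,q^2)$, both bases have the same Gram matrix with respect to the Hermitian form $h$, namely $\epsilon$ times the standard alternating matrix, where $\epsilon\in\GF(q^2)$ is a fixed element with $\epsilon^q=-\epsilon$ in odd characteristic and $\epsilon=1$ in even characteristic (chosen so that on $\GF(q)$--rational vectors the Hermitian form coincides with $\epsilon$ times the symplectic form). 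Hence the unique $\GF(q^2)$--linear map sending $B_1$ to $B_2$ is unitary, and its projectivity lies in $G$ and sends $\Sigma_1$ to $\Sigma_2$; this is a standard Witt-type extension.

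For the stabilizer $G_\Sigma$ of a fixed embedded $\Sigma$, I observe that every projectivity of $\PGL(2n,q^2)$ fixing $\Sigma$ admits a representative in $\GL(2n,q)$, so $G_\Sigma$ embeds in $\PGL(2n,q)$. Writing the Hermitian Gram matrix as $J=\epsilon K$ with $K$ alternating over $\GF(q)$, the condition $M^tJM^q=\mu J$ for $M\in\GL(2n,q)$ reduces, using $M^q=M$, to $M^tKM=\mu K$ with $\mu\in\GF(q)^*$, so $M\in\GSp(2n,q)$. Therefore $G_\Sigma=\PGSp(2n,q)$; since the multiplier map $\GSp(2n,q)\to\GF(q)^*$ is surjective with kernel $\Sp(2n,q)$, and the center of $\GSp(2n,q)$ is the full scalar subgroup of order $q-1$, one obtains $|G_\Sigma|=|\Sp(2n,q)|=q^{n^2}\prod_{i=1}^{n}(q^{2i}-1)$.

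Combining this with
$$|\PGU(2n,q^2)|=\frac{q^{n(2n-1)}}{q+1}\prod_{i=1}^{n}(q^{2i-1}+1)(q^{2i}-1)=q^{n(2n-1)}\prod_{i=2}^{n}(q^{2i-1}+1)\prod_{i=1}^{n}(q^{2i}-1),$$
the orbit--stabilizer theorem yields $|\cB|=|G|/|G_\Sigma|=q^{n^2-n}\prod_{i=2}^{n}(q^{2i-1}+1)$, which is the claimed count. I expect the main subtlety to be the bookkeeping between the groups $\GU$, $\Sp$, $\GSp$ and their projective quotients---in particular verifying that the factor $(q-1)$ coming from the multiplier map on $\GSp$ cancels exactly against its scalar center, so that $G_\Sigma$ has the required order; a secondary point is keeping the odd and even characteristic cases uniform via the single parameter $\epsilon$.
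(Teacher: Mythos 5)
Your proof is correct, but it takes a genuinely different route from the paper's. The paper counts by induction on $n$: it shows that through a $\PG(2n-3,q)$ embedded in a hyperplane section $\cH(2n-3,q^2)=\ell^\perp\cap\cH(2n-1,q^2)$ there pass exactly $q+1$ embedded $\PG(2n-1,q)$'s, and then double counts pairs $(\Sigma,\Sigma')$ to get the recursion $x_n=q^{2n-2}(q^{2n-1}+1)\,x_{n-1}$ with $x_1=1$; no group action is used at this stage. You instead prove transitivity of $\PGU(2n,q^2)$ directly by a Witt-type extension (two symplectic bases of the induced alternating $\GF(q)$-forms have the same Hermitian Gram matrix $\epsilon K$, so the change of basis is unitary), identify the stabilizer with the image of $\GSp(2n,q)$ of order $|\Sp(2n,q)|$, and apply orbit--stabilizer; your bookkeeping of $\GU$, $\GSp$, $\Sp$ and the scalar kernels checks out, as does the final cancellation $\prod_{i=1}^{n}(q^{2i-1}+1)/(q+1)=\prod_{i=2}^{n}(q^{2i-1}+1)$. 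It is worth noting that the paper's subsequent Proposition on the transitivity of $\PU(2n,q^2)$ is proved there by the reverse logic --- the orbit length computed from the stabilizer is shown to \emph{equal} the count just established, whence transitivity --- so your argument effectively merges that proposition with this one and removes the dependence of transitivity on the count. What the paper's route buys is an elementary, purely incidence-geometric argument that also exhibits the local structure ($q+1$ extensions over each codimension-one subconfiguration), which is reused elsewhere; what your route buys is a shorter, self-contained derivation that simultaneously yields the transitivity and the stabilizer structure $\langle\Delta,\Sp(2n,q)\rangle$ needed later. The only point you should make fully explicit is that a projectivity fixing $\Sigma$ setwise has a representative matrix over $\GF(q)$ \emph{and} that this representative, being a $\GF(q^2)$-scalar multiple of a unitary isometry, automatically satisfies $M^tJM^q=\mu J$ with $\mu=\lambda^{q+1}\in\GF(q)^*$; you assert this but the one-line norm computation deserves to be written down.
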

\begin{proof}
First observe that if $\ell$ is a line containing $q+1$ points of $\cH(2n-1, q^2)$, then $\ell^\perp \cap \cH(2n-1, q^2) = \cH(2n-3, q^2)$ and through a Baer subgeometry $\Sigma$ isomorphic to $\PG(2n-3, q)$ embedded in $\cH(2n-3, q^2)$ there pass exactly $q+1$ Baer subgeometries isomorphic to $\PG(2n-1, q)$ and embedded in $\cH(2n-1, q^2)$. Indeed, if $\Sigma'$ is a Baer subgeometry embedded in $\cH(2n-1, q^2)$, with $\Sigma \subset \Sigma'$, then $\ell \cap \cH(2n-1, q^2) = \ell \cap \Sigma'$ and every line joining a point $P' = \ell \cap \Sigma'$ and a point $P \in \Sigma$ meets $\Sigma'$ in a Baer subline. On the other hand, on the line $P P'$ there are exactly $q+1$ Baer sublines containing both $P$ and $P'$ and if $s$ is one of such Bear sublines, then there is a unique Baer subgeometry $\Sigma'$ containing $\Sigma$ and $s$. Moreover since every line joining a point of $\ell \cap \Sigma'$ and a point of $\Sigma$ is a line of $\cH(2n-1, q^2)$ we have that these subgeometries are embedded in $\cH(2n-1, q^2)$.

Let $x_n$ be the number of Baer subgeometries embedded in $\cH(2n-1, q^2)$. Since the number of $\cH(2n-3, q^2)$ in $\cH(2n-1, q^2)$ equals $q^{4n-4}(q^{2n-1}+1)(q^{2n}-1)/(q+1)(q^2-1)$ and the number of $\cW(2n-3, q)$ in $\cW(2n-1, q)$ equals $q^{2n-2}(q^{2n}-1)/(q^2-1)$, a standard double counting argument on couples $(\Sigma, \Sigma')$, where $\Sigma$ is a Baer subgeometry isomorphic to $\PG(2n-3, q)$ embedded in some $\cH(2n-3, q^2) \subset \cH(2n-1, q^2)$, $\Sigma'$ is a Baer subgeometry isomorphic to $\PG(2n-1, q)$ and embedded in $\cH(2n-1, q^2)$, with $\Sigma \subset \Sigma'$, gives $x_{n} = q^{2n-2}(q^{2n-1}+1) x_{n-1}$. The result follows from the fact that $x_1 = 1$.
\end{proof}

\begin{prop}\label{stabilizer1}
The group $\PU(2n, q^2)$ acts transitively on Baer subgeometries embedded in $\cH(2n-1, q^2)$.
\end{prop}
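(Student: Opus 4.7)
The plan is to exhibit, for any two Baer subgeometries $\Sigma_1$ and $\Sigma_2$ embedded in $\cH(2n-1,q^2)$, a common ``standardized'' $\GF(q)$-basis of each with identical $H$-Gram matrix, and then to take as the desired element of $\U(2n,q^2)$ the $\GF(q^2)$-linear change-of-basis map between the corresponding $\GF(q^2)$-bases of $V$.

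First, given an embedded $\Sigma$, I will use the fact established in the previous lemma that $\cH(2n-1,q^2)$ induces a symplectic polarity on $\Sigma\cong\PG(2n-1,q)$, and pick a symplectic (Darboux) basis $(e_1,f_1,\dots,e_n,f_n)$ of $\Sigma$ with respect to this polarity: that is, $e_i\perp e_j$ and $f_i\perp f_j$ for all $i,j$, and $e_i\perp f_j$ iff $i\ne j$. This is also a basis of $V=V(2n,q^2)$. Since every $e_i,f_i$ is fixed by the Baer involution $\tau_\Sigma$, writing $H(u,v)=u^tJv^q$ on real vectors reduces to $u^tJv$, and so the orthogonality relations translate directly into $H(e_i,e_j)=H(f_i,f_j)=0$ and $H(e_i,f_j)=0$ for $i\ne j$. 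Thus the Gram matrix of $H$ in this basis is block-diagonal with $2\times 2$ blocks $\bigl(\begin{smallmatrix}0&c_i\\c_i^q&0\end{smallmatrix}\bigr)$, where $c_i=H(e_i,f_i)\ne 0$.

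Next, I will normalize the $c_i$. Because $\Sigma\subseteq\cH(2n-1,q^2)$, every vector of $\Sigma$ is isotropic; evaluating at $\lambda e_i+\mu f_i$ with $\lambda,\mu\in\GF(q)$ gives $\lambda\mu(c_i+c_i^q)=0$, so $c_i^q=-c_i$ (in particular $c_i$ has trace zero over $\GF(q)$). The trace-zero elements form a $1$-dimensional $\GF(q)$-subspace of $\GF(q^2)$, so after fixing any nonzero trace-zero element $\alpha$, one can replace each $f_i$ by a suitable $\GF(q)^*$-multiple — which keeps $f_i$ inside $\Sigma$ and preserves the symplectic basis conditions — so that $c_i=\alpha$ for every $i$. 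In this normalized basis, the Gram matrix of $H$ equals a fixed matrix $M$ depending only on $n,q,\alpha$ and not on $\Sigma$.

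Finally, let $B_1,B_2$ be such normalized bases of $\Sigma_1,\Sigma_2$ respectively; each is a $\GF(q^2)$-basis of $V$. The $\GF(q^2)$-linear map $g:V\to V$ sending $B_1$ to $B_2$ satisfies $H(gu,gv)=H(u,v)$ for all $u,v\in V$ because the Gram matrix of $H$ is $M$ in both bases; hence $g\in\U(2n,q^2)$. Moreover $g$ sends the $\GF(q)$-span of $B_1$, which is $\Sigma_1$, to the $\GF(q)$-span of $B_2$, which is $\Sigma_2$. Passing to the induced projectivity in $\PU(2n,q^2)$ gives the required element. The main technical obstacle is the even-characteristic case, where ``symplectic'' must be interpreted as alternating (and where $\alpha$ may be taken in $\GF(q)$); one must verify that the bilinear form on $\Sigma$ is alternating (automatic, since $H(v,v)=0$ for every real $v$) so that the Darboux basis exists, and that the rescaling step still produces a single canonical Gram matrix, after which the rest of the argument is uniform in the characteristic.
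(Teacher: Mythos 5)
Your proof is correct, but it takes a genuinely different route from the paper's. The paper argues by counting: it identifies the stabilizer of the canonical embedded subgeometry in $\U(2n,q^2)$ as $\langle \Delta, \Sp(2n,q)\rangle$ of order $(q+1)|\Sp(2n,q)|$, passes to $\PU(2n,q^2)$, and checks that the resulting orbit length equals the total number of embedded Baer subgeometries computed in the preceding proposition. You instead give a direct Witt-style argument: every embedded subgeometry admits a $\GF(q)$-basis of $V(2n,q^2)$ in which the Gram matrix of $H$ is one fixed canonical matrix, so the $\GF(q^2)$-linear change of basis between two such bases is the required unitary map. Your route is more self-contained (it does not depend on the enumeration of embedded subgeometries) and is uniform in the characteristic; the paper's route has the advantage of producing the explicit stabilizer, which is reused immediately afterwards in the analysis of the $\PSU$-orbits and of the orbits on generators. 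One imprecision in your write-up: the clause ``writing $H(u,v)=u^tJv^q$ on real vectors reduces to $u^tJv$'' tacitly assumes $\Sigma$ is the canonical subgeometry, which you cannot assume for both $\Sigma_1$ and $\Sigma_2$ simultaneously. What you actually need, and what holds for an arbitrary embedded $\Sigma$ with underlying $\GF(q)$-space $W$, is that $H|_{W\times W}$ is $\GF(q)$-bilinear with values in the one-dimensional trace-zero $\GF(q)$-subspace of $\GF(q^2)$; this follows from $H(\lambda u,\mu v)=\lambda\mu\, H(u,v)$ for $\lambda,\mu\in\GF(q)$ together with $H(v,v)=0$ on $W$ (polarize to get $H(u,v)+H(u,v)^q=0$), and it is exactly what makes your form $\beta$ with $H=\alpha\beta$ a nondegenerate alternating $\GF(q)$-form admitting a Darboux basis. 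With that substitution the argument is complete.
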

\begin{proof}
Since $\PGL(2n-1, q^2)$ is transitive on non--degenerate Hermitian varieties of $\PG(2n-1, q^2)$, we may assume without loss of generality that $J$ is the matrix 

\begin{equation*}
\begin{pmatrix}
0 & \xi & \dots & 0 & 0 \\
\xi^q & 0 & \dots & 0 & 0 \\
\vdots & \vdots & \ddots & \vdots & \vdots \\
0 & 0 & \dots & 0 & \xi \\
0 & 0 & \dots & \xi^q & 0
\end{pmatrix},
\end{equation*}
where $0 \ne \xi \in \GF(q^2)$ is such that $\xi^q = -\xi$. Let $\Sigma$ be the canonical Baer subgeometry of $\PG(2n-1, q^2)$. Then $\Sigma$ is embedded in $\cH(2n-1, q^2)$ and a symplectic polarity is induced on $\Sigma$. We denote by $\tilde\Sigma$ the set of all non--zero vectors of $V(2n, q^2)$ representing the points of $\Sigma$. If $S \in \U(2n, q^2)$ and $S^q = S$, then $S$ fixes $\tilde\Sigma$ and hence $S\in\Sp(2n, q)$. The $\Sp(2n,q)$ is a subgroup of $\U(2n, q^2)$ stabilizing $\tilde\Sigma$. Let $N \in \U(2n, q^2)$ stabilizes $\tilde\Sigma$ and let $u_i$ be the vector having $1$ in the $i$--th position and $0$ elsewhere. Then $N u_i^t = \rho_i x_i^t$, $1 \le i \le 2n$, where $\rho_i \in \GF(q^2) \setminus \{0\}$ and $x_i^q = x_i$. Since there exists a matrix $S \in \Sp(2n, q)$ such that $S^{-1} u_i^t = x_i^t$, we have that $S N u_i^t = \rho_i u_i^t$. Moreover $S N$ stabilizes $\tilde\Sigma$. Therefore $\rho_i = \lambda_i \rho_1$, $2 \le i \le 2n$, where $\lambda_i \in \GF(q) \ne \{0\}$ and $S N = \rho_1 \diag(1, \lambda_2, \dots, \lambda_{2n})$. Straightforward calculations show that $(SN)^t J (SN)^q = J$ if and only if $\rho_1 = \lambda_2^{-1}$ and $\lambda_{2j} = \lambda_2 \lambda_{2j-1}^{-1}$, $2 \le j \le n$, whereas $\diag(1, \lambda, \lambda_3, \lambda_2 \lambda_3^{-1}, \dots, \lambda_{2n-1}, \lambda_2 \lambda_{2n-1}^{-1}) \in \Sp(2n, q^2)$ if and only if $\lambda_2 = 1$. It follows that
$$
Stab_{\U(2n, q^2)}(\tilde\Sigma) = \langle \Delta, \Sp(2n, q) \rangle,
$$
where $\Delta = \{\Delta_{\rho} = \diag(\rho, \rho^{-q}, \dots, \rho, \rho^{-q}) \;\; | \;\; \rho \in \GF(q^2) \setminus \{0\}\}$. Since $\Delta_{\rho} \in \Sp(2n, q)$ if and only if $\rho \in \GF(q) \setminus \{0\}$, we have that $|Stab_{\U(2n, q^2)}(\tilde\Sigma)| = (q+1) |\Sp(2n, q)| = q^{n^2} (q+1) \prod_{i = 1}^{n} (q^{2i}-1)$. The centre of $\langle \Delta, \Sp(2n, q) \rangle$ consists of $\{\Delta_{\rho} \;\; | \;\; \rho^{q+1} = 1\}$ . Hence $|Stab_{\PU(2n, q^2)}(\Sigma)| = q^{n^2} \prod_{i = 1}^{n} (q^{2i}-1)$. The result follows from the fact that $|\PU(2n, q^2)|/|Stab_{\PU(2n, q^2)}(\Sigma)|$ equals the number of Baer subgeometries embedded in $\cH(2n-1, q^2)$.
\end{proof}

\begin{prop}\label{stabilizer2}
The group $\PSU(2n, q^2)$ has $gcd(q+1, n)$ equally sized orbits on Baer subgeometries embedded in $\cH(2n-1, q^2)$.
\end{prop}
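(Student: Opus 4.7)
The plan is to exploit the fact that $\PSU(2n,q^2)$ is a normal subgroup of $\PU(2n,q^2)$. By Proposition~\ref{stabilizer1}, $\PU(2n,q^2)$ acts transitively on the set $X$ of Baer subgeometries embedded in $\cH(2n-1,q^2)$, so the $\PSU(2n,q^2)$-orbits on $X$ are permuted transitively by $\PU(2n,q^2)$ and therefore share the same cardinality. If $\Sigma \in X$ has stabilizer $G_\Sigma$ in $\PU(2n,q^2)$, the number of orbits equals $[\PU(2n,q^2) : \PSU(2n,q^2) \cdot G_\Sigma]$, and the whole question reduces to computing this single index.

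The next step is to lift the index to $\U(2n,q^2)$. The center $Z = \{\lambda I : \lambda^{q+1}=1\}$ of $\U(2n,q^2)$ is contained in the group $\Delta$ introduced in the proof of Proposition~\ref{stabilizer1}, since $\lambda I = \Delta_\lambda$ whenever $\lambda^{q+1}=1$. Hence $Z$ lies in the full preimage $\tilde G_\Sigma = \langle \Delta, \Sp(2n,q)\rangle$ of $G_\Sigma$, and the preimage of $\PSU(2n,q^2)\cdot G_\Sigma$ in $\U(2n,q^2)$ is exactly $\SU(2n,q^2)\cdot \tilde G_\Sigma$. Therefore
$$
[\PU(2n,q^2) : \PSU(2n,q^2) \cdot G_\Sigma] = [\U(2n,q^2) : \SU(2n,q^2) \cdot \tilde G_\Sigma].
$$

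To finish, I apply the determinant. Recall that $\det : \U(2n,q^2) \to C_{q+1}$, where $C_{q+1}$ denotes the group of $(q+1)$-th roots of unity in $\GF(q^2)$, is a surjective homomorphism with kernel $\SU(2n,q^2)$. Thus the right-hand side above equals $[C_{q+1} : \det(\tilde G_\Sigma)]$. As $\Sp(2n,q) \subset \SL(2n,q^2)$, we have $\det(\tilde G_\Sigma) = \det(\Delta)$, and a direct computation on one hyperbolic block gives $\det(\Delta_\rho) = \rho^{n(1-q)}$. Since $\rho \mapsto \rho^{1-q}$ surjects $\GF(q^2)^*$ onto $C_{q+1}$, the image $\det(\Delta)$ is precisely the subgroup of $n$-th powers in $C_{q+1}$, whose index in $C_{q+1}$ equals $\gcd(q+1,n)$. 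This yields the claimed number of orbits.

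The only genuinely delicate point is identifying the preimage of $\PSU(2n,q^2)\cdot G_\Sigma$ in $\U(2n,q^2)$ as $\SU(2n,q^2)\cdot \tilde G_\Sigma$; once the inclusion $Z \subset \tilde G_\Sigma$ is checked, the remaining index arithmetic via the determinant is routine.
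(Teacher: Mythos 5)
Your argument is correct, and it reaches the conclusion by a genuinely different route from the paper. The paper works with explicit orders: it computes $Stab_{\SU(2n,q^2)}(\tilde\Sigma)=\langle\overline{\Delta},\Sp(2n,q)\rangle$ with $\overline{\Delta}=\Delta\cap\SU(2n,q^2)=\{\Delta_\rho \mid \rho^{n(q-1)}=1\}$, evaluates $|\overline{\Delta}|=(q-1)\gcd(q+1,n)$, then passes to $\PSU(2n,q^2)$ by dividing out the centre (of order $\gcd(q+1,2n)$), and finally divides the total number of embedded subgeometries by the resulting orbit length; equality of orbit sizes is obtained via conjugacy of stabilizers under the normality of $\PSU$ in $\PU$. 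You instead keep the transitivity of $\PU(2n,q^2)$ from Proposition~\ref{stabilizer1} and reduce everything to the single index $[\U(2n,q^2):\SU(2n,q^2)\cdot\langle\Delta,\Sp(2n,q)\rangle]$, which you evaluate through the determinant homomorphism onto $C_{q+1}$: since $\Sp(2n,q)$ has trivial determinant and $\det(\Delta_\rho)=\rho^{n(1-q)}$, the image is the group of $n$-th powers in $C_{q+1}$, of index $\gcd(q+1,n)$. In effect the paper computes the kernel of $\det$ restricted to $\Delta$ while you compute its image, but your packaging avoids all the centre and order bookkeeping (the $\gcd(q+1,2n)$ computations disappear entirely), and the one delicate step you flag --- that $Z\subset\langle\Delta,\Sp(2n,q)\rangle$ so that the preimage of $\PSU\cdot G_\Sigma$ is $\SU\cdot\tilde G_\Sigma$ --- is correctly identified and easily checked since $\lambda I=\Delta_\lambda$ for $\lambda^{q+1}=1$. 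What the paper's heavier computation buys is the explicit stabilizer order $|Stab_{\PSU(2n,q^2)}(\Sigma)|$, which is reused later (e.g.\ in the proof of Theorem~\ref{hyp} to compute $|\Pi^{K_n}|$); your argument yields the orbit count more cleanly but would need the transitivity statement of Proposition~\ref{stabilizer1} as input, whereas the paper's order computations essentially reprove it along the way.
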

\begin{proof}
With the same notation used in Proposition \ref{stabilizer1}, since $\Sp(2n, q) \le \SU(2n, q^2)$ and $\Delta \cap \SU(2n, q^2) = \overline{\Delta} = \{\Delta_{\rho} \;\; | \;\; \rho^{n(q-1)} = 1\}$, we have that $Stab_{\SU(2n, q^2)}(\tilde\Sigma) = \langle \overline{\Delta}, \Sp(2n, q) \rangle$. Moreover $|Stab_{\SU(2n, q^2)}(\tilde\Sigma)| = gcd(q+1, n) q^{n^2} \prod_{i = 1}^{n} (q^{2i}-1)$, since $|\overline{\Delta}| = (q-1) gcd(q+1, n)$ and $|\overline{\Delta} \cap \Sp(2n, q)| = q-1$.

The centre of $Stab_{\SU(2n, q^2)}(\tilde\Sigma)$ consists of $\{\Delta_{\rho} \;\; | \;\; \rho^{n(q-1)} = \rho^{q+1} = 1\}$ and hence has size $gcd(q+1, 2n)$. It follows that $|Stab_{\PSU(2n, q^2)}(\Sigma)| = gcd(q+1, n) q^{n^2} \prod_{i = 1}^{n} (q^{2i}-1)/gcd(q+1, 2n)$ and $|\Sigma^{\PSU(2n, q^2)}| =  q^{n^2-n} \prod_{i = 2}^{n} (q^{2i-1} + 1)/gcd(q+1, n)$. If $\Sigma'$ is a subgeometry embedded in $\cH(2n-1, q^2)$, then $Stab_{\SU(2n, q^2)}(\tilde\Sigma')$ is conjugate to $Stab_{\SU(2n, q^2)}(\tilde\Sigma)$ in $\U(2n, q^2)$ and $Stab_{\PSU(2n, q^2)}(\Sigma')$ is conjugate to $Stab_{\PSU(2n, q^2)}(\Sigma)$ in $\PU(2n, q^2)$. Moreover $\SU(2n, q^2) \trianglelefteq \U(2n, q^2)$ and $\PSU(2n, q^2) \trianglelefteq \PU(2n, q^2)$. Hence every orbit of $\PSU(2n, q^2)$ on Baer subgeometries embedded in $\cH(2n-1, q^2)$ has the same size.
\end{proof}

\subsection{The Desarguesian line--spread of a quadric in odd dimension}

Let $\alpha \in \GF(q)$ such that $X^2 - X - \alpha$ is irreducible over $\GF(q)$. Let $w \in \GF(q^2)$ such that $w^2 = w + \alpha$. Then every element $x \in \GF(q^2)$ can be uniquely written as $x = y + w z$, where $y, z \in \GF(q)$. Moreover $w + w^q = 1$ and $w^{q+1} = - \alpha$. Consider the field reduction map:
$$
\phi: \x = (x_1, \dots, x_N) \in V(N, q^2) \longmapsto \y = (y_1, z_1, \dots, y_N, z_N) \in V(2N, q).
$$
Thus
$$
\phi \left( \langle (x_1, \dots, x_N) \rangle_{q} \right) = \langle (y_1, z_1, \dots, y_N, z_N) \rangle_{q}.
$$
Moreover
$$
\phi \left( \langle (x_1, \dots, x_N) \rangle_{q^2} \right) = \langle \y = (y_1, z_1, \dots, y_N, z_N), \tilde{\y} = (\alpha z_1, y_1 + z_1, \dots, \alpha z_N, y_N + z_{N}) \rangle_{q}
$$
and
$$
\cL = \left\{ \phi \left( \langle \x \rangle_{q^2} \right) \;\; | \;\; \0 \ne \x \in V(N, q^2) \right\}
$$
is a Desarguesian line--spread of $\PG(2N-1, q^2)$. Let $\xi = 2w - 1$. Hence $\xi^q = -\xi$ and $\xi^2 = 4 \alpha + 1$.

\subsubsection{The hyperbolic case}

Assume that $N = 2n$. Consider the following non--degenerate Hermitian form $H$ on $V(2n, q^2)$
$$
H(\x, \x') = \x J_n ({\x'}^q)^t = \begin{pmatrix} x_1, \dots, x_{2n} \end{pmatrix}
J_n
\begin{pmatrix} x_1'^q, \dots, x_{2n}'^q \end{pmatrix}^t ,
$$
where $J_n$ is given by
\begin{equation}\label{hermitian_matrix}
\begin{pmatrix}
0 & \xi & \dots & 0 & 0 \\
\xi^q & 0 & \dots & 0 & 0 \\
\vdots & \vdots & \ddots & \vdots & \vdots \\
0 & 0 & \dots & 0 & \xi \\
0 & 0 & \dots & \xi^q & 0
\end{pmatrix}.
\end{equation}
Denote by $J'_{n}$ the skew symmetric matrix $\xi^{-1} J_n$ and consider the following groups of unitary isometries or symplectic similarities
\begin{align*}
\U(2n, q^2) = \{M \in \GL(2n, q^2) \;\; | \;\; M^t J_n M^q = J_n\}, \\
\SU(2n, q^2) = \{M \in \SL(2n, q^2) \;\; | \;\; M^t J_n M^q = J_n\}, \\
\Sp(2n, q) = \{M \in \GL(2n, q) \;\; | \;\; M^t J_n' M = J_n'\}.
\end{align*}
We have that
\begin{align*}
 H(\x, \x) & = \xi (x_1 x_2^q - x_1^q x_2 + \ldots + x_{2n-1} x_{2n}^q - x_{2n-1}^q x_{2n}) \\
  & = - \xi^2 (y_1 z_2 - z_1 y_2 + \ldots + y_{2n-1} z_{2n} - z_{2n-1} y_{2n}) = Q(\y).
\end{align*}
Note that $Q$ is a non--degenerate quadratic form on $V(4n, q)$ of hyperbolic type. Let $\cH(2n-1, q^2)$ be the Hermitian variety of $\PG(2n-1, q^2)$ determined by $H$. Denote by $\cQ^+(4n-1, q)$ the hyperbolic quadric of $\PG(4n-1, q)$ defined by $Q$ and let $\PGO^+(4n, q)$ denote the group of projectivities of $\PG(4n-1, q)$ stabilizing $\cQ^+(4n-1, q)$. It follows that
$$
\cL_1 = \left\{ \phi \left( \langle \x \rangle_{q^2} \right) \;\; | \;\; \0 \ne \x \in V(N, q^2), H(\x, \x) = 0 \right\} \subset \cL
$$
is a line--spread of $\cQ^+(4n-1, q)$, see also \cite{D}.

\begin{prop}
There exists a group $G_n \le \PGO^+(4n, q)$ isomorphic to $\frac{\U(2n, q^2)}{\langle - I_{2n} \rangle}$ stabilizing $\cL$.
\end{prop}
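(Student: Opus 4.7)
The plan is to construct a concrete group homomorphism $\U(2n,q^2)\to \GO^+(4n,q)$ via the field reduction map $\phi$, descend to the projective level, and identify the kernel with $\{\pm I_{2n}\}$.

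First I would associate to each $M\in \U(2n,q^2)$ the unique $\GF(q)$-linear map $\tilde M$ of $V(4n,q)$ satisfying $\phi\circ M=\tilde M\circ \phi$. Explicitly, writing $M=(m_{ij})$ with $m_{ij}=a_{ij}+wb_{ij}$ and $a_{ij},b_{ij}\in\GF(q)$, one replaces each entry $m_{ij}$ by the $2\times 2$ matrix over $\GF(q)$ representing multiplication by $m_{ij}$ in the $\GF(q)$-basis $\{1,w\}$ of $\GF(q^2)$, namely $\begin{pmatrix} a_{ij} & \alpha b_{ij}\\ b_{ij} & a_{ij}+b_{ij}\end{pmatrix}$. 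The assignment $\Phi\colon M\mapsto \tilde M$ is an injective homomorphism from $\U(2n,q^2)$ into $\GL(4n,q)$, and it trivially stabilises the Desarguesian spread $\cL$, since for every nonzero $\x$ we have $\tilde M\bigl(\phi(\langle \x\rangle_{q^2})\bigr)=\phi(\langle M\x\rangle_{q^2})\in \cL$.

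Next I would show $\tilde M\in \GO(4n,q)$. The identity $H(\x,\x)=Q(\phi(\x))$ already established in the text, combined with the unitary property $H(M\x,M\x)=H(\x,\x)$, gives $Q(\tilde M\y)=Q(\y)$ for every $\y\in V(4n,q)$ (using that $\phi$ is bijective). In particular the sub-spread $\cL_1\subset \cL$ of totally singular lines is preserved. To pin the image in $\GO^+(4n,q)$ rather than $\GO(4n,q)$ one argues that the two families of generators of $\cQ^+(4n-1,q)$ cannot be swapped: it is enough to check this on a set of generators of $\U(2n,q^2)$, for instance the subgroup $\Sp(2n,q)$ together with the diagonal torus $\Delta$ from the proof of Proposition~\ref{stabilizer1}, both of which are easily seen to act with trivial Dickson invariant on $V(4n,q)$.

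Finally I would compute the kernel of the composition $\U(2n,q^2)\xrightarrow{\Phi}\GO^+(4n,q)\to\PGO^+(4n,q)$, and set $G_n$ equal to the image. Since $\Phi$ is injective, an element of the kernel must be a scalar $\lambda I_{2n}\in \U(2n,q^2)$ whose image $\widetilde{\lambda I_{2n}}$ is a scalar of $\GL(4n,q)$; but $\widetilde{\lambda I_{2n}}$ is block-diagonal with $2n$ identical $2\times 2$ blocks encoding multiplication by $\lambda$, and this is scalar iff $\lambda\in \GF(q)$. The condition $\lambda^{q+1}=1$ then forces $\lambda=\pm 1$, so the kernel is $\langle -I_{2n}\rangle$ and $G_n\cong \U(2n,q^2)/\langle -I_{2n}\rangle$.

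The main obstacle is the last point of the second paragraph: verifying that $\Phi(\U(2n,q^2))$ sits inside the index-two subgroup $\GO^+(4n,q)$. The cleanest route is to reduce to checking the Dickson invariant (or spinor norm) on the generating subgroups $\Sp(2n,q)$ and $\Delta$; the rest of the argument is a straightforward translation between the unitary and orthogonal pictures under field reduction.
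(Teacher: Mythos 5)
Your construction coincides with the paper's proof: the same block matrices $\widetilde{M}$ obtained by replacing each entry $m_{ij}=a_{ij}+wb_{ij}$ with the $2\times 2$ matrix of multiplication by $m_{ij}$, the same verification that $Q(\widetilde{M}\y)=Q(\y)$ via $H(\x,\x)=Q(\phi(\x))$ and the unitary property, and the same kernel computation showing that $\widetilde{M}$ is scalar only for $M=\pm I_{2n}$. For the statement as the paper intends it, this is a complete and correct argument.

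The one place you go beyond the paper is the digression on the Dickson invariant, and there you should be careful on two counts. First, it is not needed: the paper explicitly defines $\PGO^+(4n,q)$ as the full group of projectivities of $\PG(4n-1,q)$ stabilizing $\cQ^+(4n-1,q)$ --- the superscript $+$ records the type of the quadric, not passage to the index-two subgroup preserving the two systems of generators --- so showing that $\widetilde{M}$ preserves $Q$ already places the image where it must be. Second, the reduction you propose would not work as stated: $\Sp(2n,q)$ together with the torus $\Delta$ from Proposition \ref{stabilizer1} generates only the stabilizer of an embedded Baer subgeometry, a subgroup of order $(q+1)\,|\Sp(2n,q)|$, which is far from all of $\U(2n,q^2)$; so checking the Dickson invariant on those elements alone would not settle it for the whole group. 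Since that entire step can simply be deleted, neither point damages the proof.
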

\begin{proof}
Let $M = (a_{i j}) \in \U(2n, q^2)$ and hence $H(\x M^t, \x M^t) = H(\x, \x)$. Denote by $\overline{M}$ the unique element of $\GL(4n, q)$ such that $\phi (\x M^t) = \phi(\x) \overline{M}^t$, forall $\x \in V(2n, q^2)$. Let $G_n$ be the group of projectivities of $\PG(4n-1, q)$ associated with the matrices $\{\overline{M} \;\; | \;\; M \in \U(2n, q^2)\}$. Let $g \in G_n$, then $g$ stabilizes $\cL$. Moreover $Q(\phi(\x)\overline{M}^t) = Q(\phi (\x M^t)) = H(\x M^t, \x M^t) = H(\x, \x) = Q(\phi(\x))$ and hence $g$ fixes $\cQ^+(4n-1, q)$. Some calculations show that if $M = (a_{i j})$, where $a_{i j} = b_{i j} + w c_{i j}$, then
$$
\overline{M} = ( A_{i j} ), \mbox{ where }
A_{i j} =
\begin{pmatrix}
b_{i j} & c_{i j} \alpha \\
c_{i j} & b_{i j} + c_{i j} \\
\end{pmatrix}.
$$
It follows that if $\overline{M}$ induces the identity projectivity, then $A_{i j} = \0$, $i \ne j$, and $A_{i i} = \lambda I_{2}$, $1 \le i \le 2n$, for some $\lambda \in \GF(q) \setminus \{0\}$. Hence $M = \lambda I_{2n}$. On the other hand, since $M \in \U(2n, q^2)$, we have that $\lambda = \pm 1$.
\end{proof}

Let $K_n$ be the subgroup of $G_n$ consisting of projectivities of $\PG(4n-1, q)$ associated with the set of matrices $\{\overline{M} \;\; | \;\; M \in \SU(2n, q^2)\}$. We have that $K_n \simeq \frac{\SU(2n, q^2)}{\langle - I_{2n} \rangle}$.

\begin{prop}\label{points}
The groups $K_n$ and $G_n$ act transitively on points of $\cQ^+(4n-1, q)$ and lines of $\cL_1$.
\end{prop}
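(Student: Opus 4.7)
The plan is to handle the two claims in order. Since $\cL_1$ is a line-spread of $\cQ^+(4n-1, q)$, every point of the quadric lies on a unique member of $\cL_1$, so transitivity on $\cL_1$ combined with transitivity of each line-stabiliser on the $q+1$ points of its line will yield transitivity on all points of $\cQ^+(4n-1, q)$.

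For the action on $\cL_1$, the field reduction map $\phi$ restricts to a bijection between the points of $\cH(2n-1, q^2)$ and the lines of $\cL_1$, and by construction it conjugates the natural $\SU(2n,q^2)$-action (respectively $\U(2n,q^2)$-action) on points of the Hermitian variety into the $K_n$-action (respectively $G_n$-action) on $\cL_1$. Witt's extension theorem gives that $\SU(2n, q^2)$ is already transitive on the isotropic points of $\cH(2n-1, q^2)$, so both $K_n$ and $G_n$ are transitive on $\cL_1$.

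For transitivity on the $q+1$ points of a fixed line $\ell \in \cL_1$, I would pick $P = \langle e_1 \rangle_{q^2}$ (clearly isotropic with respect to $H$) and $\ell = \phi(P)$. For each $\mu \in \GF(q^2)^*$ I would use the diagonal matrix
$$
M_\mu = \diag(\mu,\, \mu^{-q},\, \mu^{-1},\, \mu^q,\, 1,\, \ldots,\, 1) \in \GL(2n, q^2).
$$
A block-by-block check, using the anti-diagonal structure of $J_n$ and the identity $\mu^{q^2}=\mu$, confirms $M_\mu^t J_n M_\mu^q = J_n$, and the determinant telescopes to $\mu\cdot\mu^{-q}\cdot\mu^{-1}\cdot\mu^q = 1$; hence $M_\mu \in \SU(2n, q^2)$, so $\overline{M_\mu} \in K_n$. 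This projectivity stabilises $P$, hence $\ell$, and acts on the 2-dimensional $\GF(q)$-subspace $\phi(\langle e_1 \rangle_{q^2}) = \langle \phi(e_1), \phi(we_1)\rangle_q$ exactly as multiplication by $\mu$ on $\GF(q^2)$ viewed as a 2-dimensional $\GF(q)$-vector space. As $\mu$ ranges over $\GF(q^2)^*$, these operators form a Singer cycle in $\GL(2, q)$, and the induced projective image, namely $\GF(q^2)^*/\GF(q)^*$ of order $q+1$, acts sharply transitively on the $q+1$ points of $\ell$; this gives the required transitivity for $K_n$, and hence a fortiori for $G_n \supseteq K_n$.

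The main obstacle is producing an element of $\SU(2n,q^2)$ (not merely $\U(2n,q^2)$) that stabilises $P$ and realises an arbitrary $\GF(q^2)^*$-scalar on it: the auxiliary block $\diag(\mu^{-1}, \mu^q)$ inside $M_\mu$ is precisely what cancels the determinant contribution of the first block $\diag(\mu, \mu^{-q})$ while leaving the induced action on $\ell$ unchanged. Once this device is in hand, the rest of the argument reduces to the well-known Singer cycle action on $\PG(1, q)$.
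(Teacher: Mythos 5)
Your proof is correct and takes a genuinely different route from the paper's. The paper establishes both transitivity claims by writing down in explicit matrix form the stabilizers in $\U(2n,q^2)$ and $\SU(2n,q^2)$ of the subspaces $\langle \u\rangle_{q^2}$ and $\langle \u\rangle_{q}$ and then applying orbit--stabilizer counting; you instead exploit the spread structure, deducing transitivity on $\cL_1$ from Witt's theorem and upgrading it to transitivity on points via the determinant-one element $\diag(\mu,\mu^{-q},\mu^{-1},\mu^{q},1,\dots,1)$, which acts as a Singer cycle on the $q+1$ points of a fixed spread line. Your verification is sound: the auxiliary block $\diag(\mu^{-1},\mu^{q})$ is compatible with the corresponding anti-diagonal block of $J_n$ and cancels the determinant contribution of the first block, and the induced action on $\ell$ is multiplication by $\mu$ on $\GF(q^2)^*/\GF(q)^*$, which is sharply transitive on the $q+1$ points. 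Two remarks. First, your device requires $2n\ge 4$; this is not a defect but a feature, because for $n=1$ the assertion that $K_1$ is transitive on points is actually false --- the paper's own stabilizer count gives a $K_1$-orbit of size $q+1$ on the $(q+1)^2$ points of $\cQ^+(3,q)$ --- so the proposition carries an implicit hypothesis $n\ge 2$ that your argument makes visible. Second, Witt's theorem as such yields transitivity of the full isometry group $\U(2n,q^2)$ on isotropic points; transitivity of $\SU(2n,q^2)$ needs the standard supplement that the stabilizer of an isotropic point contains elements of every determinant of norm-one type (which is exactly what the free parameter $a_{2n-1}$ in the paper's explicit stabilizer provides), and this deserves a sentence in your write-up. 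What the paper's heavier computation buys is reuse: the explicit stabilizer matrices are needed again in the proof of Theorem~\ref{hyp}, where the restriction of the line-stabilizer to $\ell^{\perp}$ is identified with $G_{n-1}$ in order to run the induction on generator orbits; your shorter proof does not produce that description.
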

\begin{proof}
By Witt's theorem the group $\U(2n, q^2)$ acts transitively on $1$--dimensional subspaces of $V(2n, q^2)$ that are totally isotropic with respect to $H$. Hence $G_n$ acts transitively on lines of $\cL_1$. Alternatively, let $\u = (0, \dots, 1) \in V(2n, q^2)$ and consider a matrix $M \in \U(2n, q^2)$. Some straightforward calculations show that $M$ stabilizes $\langle \u \rangle_{q^2}$ if and only if it has the following form
\begin{equation}\label{matrix}
\begin{pmatrix}
  &      & & a_{1} & 0 \\
  & M' & & \vdots & \vdots \\
  &      & & a_{2n-2} & 0 \\
0 & \dots & 0 & a_{2n-1} & 0 \\
a_1'& \dots & a_{2n-2}' & a_{2n-1}' & a_{2n-1}^{-q} \\
\end{pmatrix},
\end{equation}
where $M'^t J_{n-1} M'^q = J_{n-1}$, $\det(M') \ne 0$, $a_1, \dots, a_{2n-2} \in \GF(q^2)$, $a_{2n-1} \in \GF(q^2) \setminus \{0\}$, $(a_{1}', \dots, a_{2n-2}')$ satisfies
$$
(a_{1}', \dots, a_{2n-2}') J_{n-1} M'^q = - \xi^q a_{2n-1}^{-q} (a_1^q, \dots, a_{2n-2}^q),
$$
and $a_{2n-1}'$ is such that $\xi a_1' a_2'^q + \xi^q a_1'^q a_2' + \ldots + \xi a_{2n-1}' a_{2n}'^q + \xi^q a_{2n-1}'^q a_{2n}' = 0$. Then $|Stab_{\U(2n, q^2)}(\langle \u \rangle_{q^2})| = q^{4n-3} (q^2-1) |\U(2n-2, q^2)|$ and $G_n$ is transitive on lines of $\cL_1$. In a similar way $M$ stabilizes $\langle \u \rangle_{q}$ if and only if $M$ has form \eqref{matrix}, where $M'^t J_{n-1} M'^q = J_{n-1}$, $\det(M') \ne 0$, $a_1, \dots, a_{2n-2} \in \GF(q^2)$, $a_{2n-1} \in \GF(q) \setminus \{0\}$, $(a_{1}', \dots, a_{2n-2}')$ satisfies
$$
(a_{1}', \dots, a_{2n-2}') J_{n-1} M'^q = - \xi^q a_{2n-1}^{-q} (a_1^q, \dots, a_{2n-2}^q),
$$
and $a_{2n-1}'$ is such that $\xi a_1' a_2'^q + \xi^q a_1'^q a_2' + \ldots + \xi a_{2n-1}' a_{2n}'^q + \xi^q a_{2n-1}'^q a_{2n}' = 0$. Then $|Stab_{\U(2n, q^2)}(\langle \u \rangle_{q})| = q^{4n-3} (q-1) |\U(2n-2, q^2)|$ and $G_n$ is transitive on points of $\cQ^+(4n-1, q)$.

On the other hand, $M \in \SU(2n, q^2)$ stabilizes $\langle \u \rangle_{q^2}$ if and only if $M$ has form \eqref{matrix}, where $M'^t J_{n-1} M'^q = J_{n-1}$, $\det(M') \ne 0$, $a_1, \dots, a_{2n-2} \in \GF(q^2)$, $a_{2n-1}^{q-1} = \det(M')$, $(a_{1}', \dots, a_{2n-2}')$ satisfies
$$
(a_{1}', \dots, a_{2n-2}') J_{n-1} M'^q = - \xi^q a_{2n-1}^{-q} (a_1^q, \dots, a_{2n-2}^q),
$$
and $a_{2n-1}'$ is such that $\xi a_1' a_2'^q + \xi^q a_1'^q a_2' + \ldots + \xi a_{2n-1}' a_{2n}'^q + \xi^q a_{2n-1}'^q a_{2n}' = 0$. Then $|Stab_{\SU(2n, q^2)}(\langle \u \rangle_{q^2})| = q^{4n-3} (q-1) |\U(2n-2, q^2)|$ and $G_n$ is transitive on lines of $\cL_1$. In the same fashion $M \in \SU(2n, q^2)$ stabilizes $\langle \u \rangle_{q}$ if and only if $M$ has the form \eqref{matrix}, where $M'^t J_{n-1} M'^q = J_{n-1}$, $\det(M') = 1$, $a_1, \dots, a_{2n-2} \in \GF(q^2)$, $a_{2n-1} \in \GF(q) \setminus \{0\}$, $(a_{1}', \dots, a_{2n-2}')$ satisfies
$$
(a_{1}', \dots, a_{2n-2}') J_{n-1} M'^q = - \xi^q a_{2n-1}^{-q} (a_1^q, \dots, a_{2n-2}^q),
$$
and $a_{2n-1}'$ is such that $\xi a_1' a_2'^q + \xi^q a_1'^q a_2' + \ldots + \xi a_{2n-1}' a_{2n}'^q + \xi^q a_{2n-1}'^q a_{2n}' = 0$. Then $|Stab_{\SU(2n, q^2)}(\langle \u \rangle_{q})| = q^{4n-3} (q-1) |\SU(2n-2, q^2)|$ and $G_n$ is transitive on points of $\cQ^+(4n-1, q)$.
\end{proof}

\begin{theorem}\label{hyp}
The group $G_n$ has $n+1$ orbits, say $\cO_{n, i}$, $0 \le i \le n$, on generators of $\cQ^+(4n-1, q)$, where
$$
|\cO_{n, 0}| = q^{n^2-n} \prod_{j = 1}^{n} (q^{2j-1} +1), \quad |\cO_{n, n}| = \prod_{j = 1}^{n} (q^{2j-1} + 1),
$$
$$
|\cO_{n, i}| = q^{(n-i)(n-i-1)} \frac{\prod_{j = n-i+1}^{n} (q^{2j} - 1)}{\prod_{j = 1}^{i} (q^{2j} - 1)} \prod_{j = 1}^{n} (q^{2j-1} +1), \quad 1 \le i \le n-1,
$$
and a member of $\cO_{n, i}$ contains exactly $(q^{2i}-1)/(q^2-1)$ lines of $\cL_1$. Moreover $\cO_{n, i}$ is a $K_n$--orbit if $1 \le i \le n$, whereas $\cO_{n, 0}$ splits into $q+1$ equally sized orbits under the action of $K_n$.
\end{theorem}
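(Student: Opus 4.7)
The plan is to associate to each generator $\Sigma$ of $\cQ^+(4n-1,q)$ a $G_n$--invariant \emph{type} $i(\Sigma)\in\{0,1,\ldots,n\}$ encoding the structure of $\Sigma$ relative to the spread $\cL_1$, and then to settle transitivity, orbit sizes and the $K_n$--action by reducing to Witt's theorem for $\U(2n,q^2)$ and the Baer subgeometry analysis of Section~5.1. Given a generator $\Sigma$, set $\cL_1(\Sigma)=\{\ell\in\cL_1:\ell\subseteq\Sigma\}$ and $\Sigma_0=\bigcup_{\ell\in\cL_1(\Sigma)}\ell$. If $\ell_1,\ell_2\in\cL_1(\Sigma)$ correspond under $\phi$ to points $P_1,P_2$ of $\cH(2n-1,q^2)$ with representatives $\x_1,\x_2$, then total singularity of $\langle\ell_1,\ell_2\rangle$ for $Q$ forces $H(\x_1,\x_2)=0$, so $\langle P_1,P_2\rangle\subseteq\cH(2n-1,q^2)$ and $\langle\ell_1,\ell_2\rangle=\phi(\langle P_1,P_2\rangle)$ is a union of lines of $\cL_1$. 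Iterating, $\Sigma_0=\phi(\Pi)$ for a unique totally isotropic subspace $\Pi\subseteq\cH(2n-1,q^2)$; define $i(\Sigma):=\dim_{\GF(q^2)}\Pi$, so that $|\cL_1(\Sigma)|=(q^{2i}-1)/(q^2-1)$. Invariance of $i$ under $G_n$ is immediate because $G_n$ stabilizes both $\cQ^+(4n-1,q)$ and $\cL_1$, so the sets $\cO_{n,0},\ldots,\cO_{n,n}$ are $G_n$--stable.

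The identification of type--$0$ generators with Baer subgeometries is the heart of the transitivity argument. A type--$0$ generator $\Sigma$ is a $2n$--dimensional $\GF(q)$--subspace $\tilde\Sigma$ of $V(4n,q)$, so $V':=\phi^{-1}(\tilde\Sigma)$ is a $\GF(q)$--subspace of $V(2n,q^2)$ of $\GF(q)$--dimension $2n$ containing no $\GF(q^2)$--line; the projective set $\Sigma^{\flat}:=\{\langle\x\rangle_{q^2}:\0\ne\x\in V'\}$ is then a Baer subgeometry of $\PG(2n-1,q^2)$ embedded in $\cH(2n-1,q^2)$ because $Q(\phi(\x))=H(\x,\x)=0$ for $\x\in V'$. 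Conversely, each such $\Sigma^{\flat}$ is represented by the $\GF(q^2)^{*}/\GF(q)^{*}$--orbit of Baer sub--vector--spaces $\{\lambda V':\lambda\in\GF(q^2)^{*}\}$, producing exactly $q+1$ distinct type--$0$ generators $\phi(\lambda V')$; combined with Proposition~\ref{stabilizer1}, this yields $G_n$--transitivity on $\cO_{n,0}$ and the size $(q+1)\cdot q^{n^2-n}\prod_{j=2}^{n}(q^{2j-1}+1)=q^{n^2-n}\prod_{j=1}^{n}(q^{2j-1}+1)$. For $1\le i\le n$, the quotient $\Pi^{\perp}/\Pi$ carries a non--degenerate Hermitian form equivalent to that of $\cH(2n-2i-1,q^2)$, the image $\Sigma/\phi(\Pi)$ is a type--$0$ generator of the residual hyperbolic quadric $\cQ^{+}(4n-4i-1,q)$, and the Levi factor of $\mathrm{Stab}_{G_n}(\Pi)$ acts on the residue as the full $\PU(2n-2i,q^2)$. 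Witt's theorem for $\U(2n,q^2)$ gives transitivity on totally isotropic $(i-1)$--subspaces of $\cH$, and the residual version of Proposition~\ref{stabilizer1} provides transitivity on type--$0$ generators of the residual; composing these yields transitivity of $G_n$ on $\cO_{n,i}$. The orbit size factors as
$$|\cO_{n,i}|=\genfrac{[}{]}{0pt}{}{n}{i}_{q^2}\prod_{j=1}^{i}(q^{2(n-j)+1}+1)\cdot|\cO_{n-i,0}|,$$
and expanding $\genfrac{[}{]}{0pt}{}{n}{i}_{q^2}=\prod_{j=n-i+1}^{n}(q^{2j}-1)/\prod_{j=1}^{i}(q^{2j}-1)$ together with the type--$0$ count gives the stated formula.

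For the $K_n$--orbit analysis, since $[G_n:K_n]=q+1$, the $G_n$--orbit $\cO_{n,i}$ splits into $(q+1)/[\Gamma_i:\Gamma_i\cap K_n]$ equally--sized $K_n$--orbits, with $\Gamma_i=\mathrm{Stab}_{G_n}(\Sigma)$ for a fixed representative. For $i=0$: the stabilizer in $\U(2n,q^2)$ of $V'$ consists of matrices with $\GF(q)$--entries preserving $H$, which by the argument of Proposition~\ref{stabilizer1} equals $\Sp(2n,q)$; since every element of $\Sp(2n,q)$ has determinant $1$, we have $\Gamma_0\subseteq K_n$, producing exactly $q+1$ $K_n$--orbits. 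For $i\ge 1$: fix a decomposition $V(2n,q^2)=\Pi\oplus\Pi^{*}\oplus W$ with $\Pi,\Pi^{*}$ dual isotropic and $W$ the non--degenerate residual; for $g\in\GL(i,q^2)$, the extension $M_g\in\U(2n,q^2)$ acting as $g$ on $\Pi$, as the Hermitian dual $(g^{-t})^{q}$ on $\Pi^{*}$ and as identity on $W$ lies in $\Gamma_i$ (after adjusting the residual type--$0$ part), and satisfies $\det M_g=\det(g)^{1-q}$. As $\det(g)$ ranges over $\GF(q^2)^{*}$, the value $\det(g)^{1-q}$ sweeps out the full norm--$1$ subgroup $\{\beta:\beta^{q+1}=1\}\cong G_n/K_n$, so $\Gamma_i\cdot K_n=G_n$ and $\cO_{n,i}$ remains a single $K_n$--orbit. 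The main obstacle is the bookkeeping of the parabolic/Levi structure required to show that elements of the $\GL(i,q^2)$--factor can be chosen to simultaneously fix the residual type--$0$ generator and to make the determinant map surjective onto $G_n/K_n$; once this is in place, the induction on the residual Hermitian via Proposition~\ref{stabilizer1} delivers the remaining transitivity claims.
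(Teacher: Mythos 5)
Your argument is correct in substance but follows a genuinely different route from the paper's. The paper proceeds by induction on $n$: it fixes a spread line $\ell\in\cL_1$ inside a generator, cuts with the $(4n-5)$--space $\Lambda$ given by $y_{2n-1}=z_{2n-1}=y_{2n}=z_{2n}=0$, observes that the stabilizer of $\ell$ in $G_n$ restricted to $\Lambda$ is $G_{n-1}$, and lifts the orbit structure one rank at a time, treating $\cO_{n,0}$ by a direct stabilizer computation ($\Sp(2n,q)/\langle -I_{2n}\rangle$) and the $K_n$--action on $\cO_{n,0}$ via Proposition \ref{stabilizer2}. You instead parametrize all generators at once: a generator $\Sigma$ determines the totally isotropic $\GF(q^2)$--subspace $\Pi$ with $\phi(\Pi)$ equal to the union of the spread lines in $\Sigma$, and $\Sigma$ is recovered from the pair consisting of $\Pi$ and a type--$0$ generator of the residual quadric; transitivity then comes from Witt's theorem together with Proposition \ref{stabilizer1}, and the $K_n$--analysis from a determinant computation. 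What your approach buys is a uniform, non-inductive description of the orbits and a transparent reason why $\cO_{n,i}$ remains a single $K_n$--orbit for $i\ge 1$ (the Levi element $M_g$ has $\det M_g=\det(g)^{1-q}$, which sweeps the whole norm--one subgroup, so $\mathrm{Stab}_{\U(2n,q^2)}(\Sigma)\cdot\SU(2n,q^2)=\U(2n,q^2)$); the paper's induction buys shorter verifications at each stage. The counting identity $|\cO_{n,i}|=|\cO_{n-i,0}|\cdot\#\{(i-1)\text{--spaces of }\cH(2n-1,q^2)\}$ is common to both.

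Two points you leave implicit should be nailed down, though both are easily completed. First, $G_n$--transitivity on $\cO_{n,0}$ does not follow from transitivity on embedded Baer subgeometries alone: you also need the stabilizer of one subgeometry to permute its $q+1$ vector--space lifts $\lambda V'$ transitively. This holds because $|\mathrm{Stab}_{\U(2n,q^2)}(\tilde\Sigma)|=(q+1)\,|\Sp(2n,q)|$ while the stabilizer of the single lift $V'$ is $\Sp(2n,q)$, so the lifts form one orbit of length $q+1$ under $\mathrm{Stab}_{\U(2n,q^2)}(\tilde\Sigma)$ (concretely, the elements $\Delta_\rho$ of Proposition \ref{stabilizer1} move $V'$ through all its scalar translates); this should be said explicitly. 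Second, the ``main obstacle'' you flag in the $K_n$ step is not actually an obstacle: since $\phi(\Pi)\subseteq\Sigma\subseteq\phi(\Pi\oplus W)$, the modular law gives $\Sigma=\phi(\Pi)\oplus(\Sigma\cap\phi(W))$ with $\Sigma\cap\phi(W)$ the residual type--$0$ generator, so any $M_g$ acting as the identity on $W$ automatically fixes the residual part pointwise and hence stabilizes $\Sigma$; no adjustment of the residual part is needed, and the determinant argument closes the $i\ge 1$ case.
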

\begin{proof}
By induction on $n$. The statement holds true for $n = 1$. Indeed $G_1$ has two orbits on generators of $\cQ^+(3, q)$. Let $\Pi$ be a generator of $\cQ^+(4n-1, q)$ containing at least one line of $\cL_1$. Since $G_n$ is transitive on lines of $\cL_1$, we may assume that $\Pi$ contains the line $\ell = \phi(\langle \u \rangle_{q^2}) \in \cL_1$, where $\u = (0, \dots, 0,1)$. Consider the $(4n-5)$--space $\Lambda$ of $\PG(4n-1, q)$ given by $y_{2n-1} = z_{2n-1} = y_{2n} = z_{2n} = 0$. Then $\Lambda$ meets $\cQ^+(4n-1, q)$ in a $\cQ^+(4n-5, q)$ and $\Pi \cap \Lambda$ is a generator of $\cQ^+(4n-5, q)$. From the proof of Proposition \ref{points}, we have that the stabilizer of $\ell$ in $G_n$, when restricted on $\Lambda$, coincides with $G_{n-1}$. By induction hypothesis the group $G_{n-1}$ has $n$ orbits on generators of $\cQ^+(4n-5, q)$, namely $\cO_{n-1, i}$, $0 \le i \le n-1$. Let $\Pi$ be a member of $\cO_{n, i}$ whenever $\Pi \cap \Lambda$ belongs to $\cO_{n-1, i-1}$, where $1 \le i \le n$. Note that $\Pi$ contains $(q^{2i}-1)/(q^2-1)$ lines of $\cL_1$ if and only if $\Pi \cap \Lambda$ contains $(q^{2i-2}-1)/(q^2-1)$ lines of $\cL_1$. In order to calculate the size of $\cO_{n, i}$, $1 \le i \le n$, set $\cO_{0, 0} = 1$ and observe that
$$
|\cO_{n, i}| =  |\cO_{n-i, 0}| \cdot \mbox{\# of $(i-1)$--spaces of } \cH(2n-1, q^2).
$$
It follows that the number of generators of $\cQ^+(4n-1, q)$ not contained in $\cup_{i = 1}^{n} \cO_{n, i}$ or equivalently containing no line of $\cL_1$ equals $q^{n^2-n} \prod_{j = 1}^{n} (q^{2j-1} +1)$. Let us consider the following vector space over $\GF(q)$: $\{\langle \x \rangle_{q} \;\; | \;\; \0 \ne \x \in V(2n, q^2), \x^q = \x\}$ and assume that $\Pi = \{\phi(\langle \x \rangle_{q}) \;\; | \;\; \0 \ne \x \in V(2n, q^2), \x^q = \x\}$. Then $\Pi$ is the generator of $\cQ^+(4n-1, q)$ given by $z_1 = \ldots = z_{2n} = 0$ and no line of $\cL_1$ is contained in $\Pi$. Let $g \in G_n$ such that $\Pi^g = \Pi$, where $g$ is represented by the matrix $\overline{M}$. Thus $M = (a_{i j}) \in \U(2n, q^2)$ is such that $M$ fixes $\Sigma$. Therefore $a_{i j}^q = a_{i j}$ and $M^t J'_n M = J'_n$, that is $M \in \Sp(2n, q)$. Viceversa, if $M \in \Sp(2n, q)$, then the projectivity of $G_n$ represented by $\overline{M}$ stabilizes $\Pi$. Hence $Stab_{G_n}(\Pi) = \Sp(2n, q)/\langle -I_{2n} \rangle$. It follows that $|\Pi^{G_n}| = q^{n^2-n} \prod_{j = 1}^{n} (q^{2j-1} +1)$ and $G_n$ is transitive on generators of $\cQ^+(4n-1, q)$ containing no line of $\cL_1$.

In order to study the action of the group $K_n$ on generators of $\cQ^+(4n-1, q)$, note that, from the proof of Proposition \ref{points}, the stabilizer of $\ell$ in $K_n$, when restricted on $\Lambda$, coincides with $G_{n-1}$. Hence an analogous inductive argument to that used in the previous case gives that $\cO_{n, i}$, $1 \le i \le n$, is a $K_n$--orbit. Let $\Pi$ be a generator of $\cQ^+(4n-1, q)$ containing no line of $\cL_1$, i.e., $\Pi \in \cO_{n, 0}$. The lines of $\cL_1$ meeting $\Pi$ in one point are the images under $\phi$ of the points of a Baer subgeometry $\Sigma$ embedded in $\cH(2n-1, q^2)$. Hence if $\overline{M} \in Stab_{K_n}(\Pi)$, then $M \in Stab_{\SU(2n, q^2)}(\Sigma)$, that is conjugate in $\U(2n, q^2)$ to $\langle \overline{\Delta}, \Sp(2n, q) \rangle$ by Proposition \ref{stabilizer2}. On the other hand if $\overline{M}$ is conjugate to an element of $\langle \overline{\Delta}, \Sp(2n, q) \rangle$ and it induces a projectivity that fixes $\Pi$, then $M \in \Sp(2n, q)$. Therefore $|Stab_{K_n}(\Pi)| = |\Sp(2n, q)/\langle -I_{2n} \rangle|$ and $|\Pi^{K_n}| = q^{n^2-n} \prod_{j = 2}^{n} (q^{2j-1} +1)$.
\end{proof}

\subsubsection{The elliptic case}

Assume that $N = 2n+1$. Consider the following non--degenerate Hermitian form $H$ on $V(2n + 1, q^2)$
$$
H(\x, \x') = \x \tilde{J_n} ({\x'}^q)^t = \begin{pmatrix} x_1, \dots, x_{2n+1} \end{pmatrix}
\tilde{J_n}
\begin{pmatrix} x_1'^q, \dots, x_{2n+1}'^q \end{pmatrix}^t ,
$$
where $\tilde{J_n}$ is given by
\begin{equation}\label{hermitian_matrix_1}
\begin{pmatrix}
0 & \xi & \cdots & 0 & 0 & 0 \\
\xi^q & 0 & \cdots & 0 & 0 & 0 \\
\vdots & \vdots & \ddots & \vdots & \vdots & \vdots \\
0 & 0 & \cdots & 0 & \xi & 0 \\
0 & 0 & \cdots & \xi^q & 0 & 0 \\
0 & 0 & \cdots & 0 & 0 & - \xi^2
\end{pmatrix}.
\end{equation}
Consider the following groups of unitary isometries
\begin{align*}
\U(2n+1, q^2) = \{M \in \GL(2n+1, q^2) \;\; | \;\; M^t \tilde{J_n} M^q = \tilde{J_n}\}, \\
\SU(2n+1, q^2) = \{M \in \SL(2n+1, q^2) \;\; | \;\; M^t \tilde{J_n} M^q = \tilde{J_n}\}, \\
\end{align*}
We have that
\begin{align*}
 H(\x, \x) & = \xi (x_1 x_2^q - x_1^q x_2 + \ldots + x_{2n-1} x_{2n}^q - x_{2n-1}^q x_{2n} - \xi x_{2n+1}^{q+1}) \\
  & = - \xi^2 (y_1 z_2 - z_1 y_2 + \ldots + y_{2n-1} z_{2n} - z_{2n-1} y_{2n} + y_{2n+1}^2 + y_{2n+1} z_{2n+1} - \alpha z_{2n+1}^2) = Q(\y).
\end{align*}
Note that $Q$ is a non--degenerate quadratic form on $V(4n+2, q)$ of elliptic type. Let $\cH(2n, q^2)$ be the Hermitian variety of $\PG(2n, q^2)$ determined by $H$. Denote by $\cQ^-(4n+1, q)$ the hyperbolic quadric of $\PG(4n+1, q)$ defined by $Q$ and let $\PGO^-(4n+2, q)$ denote the group of projectivities of $\PG(4n+1, q)$ stabilizing $\cQ^-(4n+1, q)$. It follows that
$$
\cL_1 = \left\{ \phi \left( \langle \x \rangle_{q^2} \right) \;\; | \;\; \0 \ne \x \in V(N, q^2), H(\x, \x) = 0 \right\} \subset \cL
$$
is a line--spread of $\cQ^-(4n+1, q)$, see also \cite{D}. Similarly to the previous case, the following holds.

\begin{prop}
There exists a group $\tilde{G}_n \le \PGO^-(4n+2, q)$ isomorphic to $\frac{\U(2n+1, q^2)}{\langle - I_{2n+1} \rangle}$ stabilizing $\cL$.
\end{prop}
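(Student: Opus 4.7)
The plan is to imitate verbatim the construction given in the hyperbolic case. For each $M \in \U(2n+1, q^2)$, since $\phi : V(2n+1, q^2) \to V(4n+2, q)$ is a $\GF(q)$-linear bijection, the $\GF(q)$-linear map $\x \mapsto \x M^t$ on $V(2n+1, q^2)$ transports via $\phi$ to a unique element $\overline{M} \in \GL(4n+2, q)$ satisfying $\phi(\x M^t) = \phi(\x)\overline{M}^t$ for all $\x$. Let $\tilde{G}_n$ be the group of projectivities of $\PG(4n+1, q)$ induced by the set $\{\overline{M} : M \in \U(2n+1, q^2)\}$. The map $M \mapsto \overline{M}$ is a group homomorphism because composition in $V(2n+1, q^2)$ is transported faithfully by $\phi$, so $\tilde{G}_n$ is indeed a group.

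Next I would verify the two claimed properties. First, $\tilde G_n$ stabilises $\cL$: any $M$ acts $\GF(q^2)$-linearly on $V(2n+1, q^2)$ and hence permutes the $1$-dimensional $\GF(q^2)$-subspaces, whose images under $\phi$ are by definition the members of $\cL$. Second, $\tilde G_n \le \PGO^-(4n+2, q)$: by the identity $Q(\phi(\x)) = H(\x, \x)$ established just before the statement, one has
\[
Q(\phi(\x)\overline{M}^t) = Q(\phi(\x M^t)) = H(\x M^t, \x M^t) = H(\x, \x) = Q(\phi(\x)),
\]
so every $g \in \tilde G_n$ preserves $\cQ^-(4n+1, q)$.

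It remains to identify the kernel of the map $\U(2n+1,q^2) \to \tilde G_n$, which is where the actual work is. Writing $M = (a_{ij})$ with $a_{ij} = b_{ij} + w c_{ij}$ and unwinding $\phi$ in the ordered $\GF(q)$-basis $\{1,w\}$ of $\GF(q^2)$ yields the same explicit block form as in the hyperbolic case: $\overline{M}$ is a $(2n+1)\times(2n+1)$ block matrix whose blocks are
\[
A_{ij} = \begin{pmatrix} b_{ij} & c_{ij}\alpha \\ c_{ij} & b_{ij} + c_{ij} \end{pmatrix}.
\]
If $g = \mathrm{id}$ then $\overline{M} = \mu I_{4n+2}$ for some $\mu \in \GF(q)\setminus\{0\}$, forcing $A_{ij} = \0$ for $i \ne j$ and $A_{ii} = \mu I_2$; reading off the block entries gives $a_{ii} = \mu$ and $a_{ij} = 0$ for $i \ne j$, i.e. $M = \mu I_{2n+1}$. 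The unitary relation $M^t \tilde J_n M^q = \tilde J_n$ then forces $\mu^{q+1} = 1$, and combined with $\mu \in \GF(q)$ this gives $\mu = \pm 1$. Hence the kernel is $\langle -I_{2n+1}\rangle$ and $\tilde G_n \cong \U(2n+1, q^2)/\langle -I_{2n+1}\rangle$.

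The only step requiring any care is the explicit computation of the $2\times 2$ blocks of $\overline{M}$ from the formula $xy = (y_1 b + \alpha y_2 c) + w(y_1 c + y_2 b + y_2 c)$ for $x = b + wc$, $y = y_1 + w y_2$ (using $w^2 = w + \alpha$); this is the same calculation as in the hyperbolic proposition, only with one additional coordinate slot, so no genuine obstacle arises.
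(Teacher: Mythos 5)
Your proposal is correct and is essentially the paper's intended argument: the paper omits the proof of the elliptic-case proposition, remarking only that it holds ``similarly to the previous case,'' and your proof is exactly the hyperbolic-case proof transported to $V(2n+1,q^2)$, including the block description of $\overline{M}$ and the kernel computation showing $\overline{M}=\mu I_{4n+2}$ forces $M=\mu I_{2n+1}$ with $\mu^{q+1}=1$, hence $\mu=\pm 1$.
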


Let $\tilde{K}_n$ be the subgroup of $\tilde{G}_n$ consisting of projectivities of $\PG(4n+1, q)$ associated with the matrices $\{\overline{M} \;\; | \;\; M \in \SU(2n+1, q^2)\}$. We have that $\tilde{K}_n \simeq \frac{\SU(2n+1, q^2)}{\langle - I_{2n+1} \rangle}$.

\begin{prop}\label{points_1}
The groups $\tilde{K}_n$ and $\tilde{G}_n$ act transitively on lines of $\cL \setminus \cL_1$ and for $\ell \in \cL \setminus \cL_1$, $Stab_{\tilde{K}_n}(\ell) \simeq G_n$ and $Stab_{\tilde{G}_{n}}(\ell) \simeq (q+1) \times G_n$.
\end{prop}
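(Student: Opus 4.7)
The plan is to use field reduction to translate the problem to Hermitian linear algebra over $\GF(q^2)$. Via $\phi$, the lines of $\cL$ are in natural bijection with $1$-dimensional $\GF(q^2)$-subspaces of $V(2n+1, q^2)$, with $\cL_1$ corresponding to the isotropic ones; so $\cL \setminus \cL_1$ parametrises the non-isotropic $1$-spaces $\langle \x \rangle_{q^2}$. Transitivity then reduces to the known fact that $\U(2n+1, q^2)$ acts transitively on non-isotropic $1$-spaces: by Witt's theorem, two non-isotropic vectors $\x, \y$ can be matched after rescaling $\y$ by some $\mu \in \GF(q^2)^*$ so that $H(\mu\y, \mu\y) = H(\x, \x)$, using surjectivity of the norm $\mu \mapsto \mu^{q+1}$ onto $\GF(q)^*$. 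Transitivity for $\SU$ follows because the scalar matrices with eigenvalue in $C_{q+1}=\{\mu\in\GF(q^2)^*:\mu^{q+1}=1\}$ on $\langle \x \rangle$ lie in ${\rm Stab}_\U(\langle \x \rangle)$ and surject onto $\U/\SU \cong C_{q+1}$ via $\det$, so $\SU$ and $\U$ share the same orbit.

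To obtain the stabiliser structure, I would take $\x = e_{2n+1}$, which is non-isotropic since the last diagonal entry of $\tilde J_n$ in (5.3) is $-\xi^2 \ne 0$, and note that $\langle \x \rangle^\perp$ carries exactly the hyperbolic form $J_n$ of (5.2). The orthogonal decomposition $V(2n+1, q^2) = \langle \x \rangle \perp \langle \x \rangle^\perp$ forces every $M \in {\rm Stab}_{\U(2n+1, q^2)}(\langle \x \rangle)$ to be block-diagonal, $M = \diag(M', \lambda)$, and the unitary equation $M^t \tilde J_n M^q = \tilde J_n$ decouples into $M' \in \U(2n, q^2)$ and $\lambda \in C_{q+1}$. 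Hence ${\rm Stab}_{\U(2n+1, q^2)}(\langle \x \rangle) \cong \U(2n, q^2) \times C_{q+1}$, and intersecting with $\SU$ adds the relation $\det(M')\lambda = 1$, giving ${\rm Stab}_{\SU(2n+1, q^2)}(\langle \x \rangle) \cong \U(2n, q^2)$ via the section $M' \mapsto (M', \det(M')^{-1})$.

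The final step is to descend to the projective groups. Since $\phi(\langle \x \rangle^\perp)$ is the polar $(4n-1)$-space $\ell^\perp$ of $\ell$ in $\PG(4n+1, q)$, and $\ell^\perp \cap \cQ^-(4n+1, q) = \cQ^+(4n-1, q)$ inherits the Desarguesian line-spread, the induced action of ${\rm Stab}_{\tilde K_n}(\ell)$ on $\ell^\perp$ is exactly the action of $\U(2n, q^2)$ from the hyperbolic case studied in the previous subsection, whose image in $\PGO^+(4n, q)$ is $G_n$; combined with the identification of $\tilde K_n$ as the image of $\SU(2n+1, q^2)$ modulo $\langle -I_{2n+1} \rangle$, this yields ${\rm Stab}_{\tilde K_n}(\ell) \simeq G_n$. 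For $\tilde G_n$, the extra $C_{q+1}$-factor of scalars on $\langle \x \rangle$ commutes with the image on $\ell^\perp$ and provides a central summand via the embedding $\lambda \mapsto \diag(I_{2n}, \lambda)$, producing ${\rm Stab}_{\tilde G_n}(\ell) \simeq C_{q+1} \times G_n$.

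The main obstacle lies in this final identification. The kernel $\langle (-I_{2n}, -1) \rangle$ of the natural map $\U(2n, q^2) \times C_{q+1} \to {\rm Stab}_{\tilde G_n}(\ell)$ ties the two factors diagonally, so showing that the quotient splits as a genuine direct product $C_{q+1} \times G_n$ (rather than a twisted central extension) requires exhibiting compatible sections; the scalar embedding $\lambda \mapsto \lambda I_{2n}$ into the centre of $\U(2n, q^2)$ furnishes the relevant splitting, and an analogous care is needed to separate the trivial kernel of $\SU(2n+1,q^2)\to \tilde K_n$ from the stabiliser calculation in order to identify ${\rm Stab}_{\tilde K_n}(\ell)$ with $G_n$ itself.
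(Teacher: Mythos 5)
Your argument follows the same essential route as the paper's: both hinge on the observation that the stabiliser of a non-isotropic $1$-space $\langle \u \rangle_{q^2}$ consists exactly of the block-diagonal matrices $\diag(M', a)$ with $M' \in \U(2n,q^2)$ and $a^{q+1}=1$ (resp.\ $a = \det(M')^{-1}$ for $\SU$), which is the paper's equation for the matrix form of the stabiliser. The only divergence on transitivity is cosmetic: you invoke Witt's theorem plus surjectivity of the norm, and a Frattini-type argument ($\det$ of $\diag(I_{2n},\lambda)$ sweeps out $C_{q+1}=\det(\U)$) to pass to $\SU$, whereas the paper computes the stabiliser orders and checks that the orbit length equals $|\cL\setminus\cL_1|$. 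Both are sound, and the paper's version has the side benefit of producing the explicit matrix shape that is reused in the next theorem.

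Where you go beyond the paper is the projective descent, which the paper's proof omits entirely (it stops after transitivity). Here a caution: the direct-product splitting you propose in the last paragraph does not actually exist for $q$ odd. Since $\det(-I_{2n})=1$, the element $-I_{2n}$ lies in $\SU(2n,q^2)=[\U(2n,q^2),\U(2n,q^2)]$, so every homomorphism from $\U(2n,q^2)$ to an abelian group kills $-I_{2n}$; consequently there is no character separating the diagonal kernel $\langle(-I_{2n},-1)\rangle$, and a comparison of abelianizations shows $(\U(2n,q^2)\times C_{q+1})/\langle(-I_{2n},-1)\rangle \not\cong C_{q+1}\times G_n$. The scalar embedding $\lambda\mapsto\lambda I_{2n}$ does not repair this. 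What is true, and what is all that the subsequent orbit count on generators of $\cQ^-(4n+1,q)$ inside $\ell^\perp$ requires, is that the stabiliser induces exactly the permutation group $G_n$ on $\ell^\perp$, with a central kernel of order $q+1$ (resp.\ order $2$ in the $\tilde{K}_n$ case). So the defect is really in the literal reading of the statement rather than in your strategy, but your proof as written asserts a splitting that fails; it would be better to phrase the conclusion in terms of the induced action on $\ell^\perp$.
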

\begin{proof}
Let $\u = (0, \dots, 1) \in V(2n+1, q^2)$ and consider a matrix $M \in \U(2n+1, q^2)$. Some straightforward calculations show that $M$ stabilizes $\langle \u \rangle_{q^2}$ if and only if it has the form
\begin{equation}\label{matrix_1}
\begin{pmatrix}
   &          & & 0 \\
   &  M'    & & \vdots \\
   &          & & 0 \\
0 & \dots & 0 & a \\
\end{pmatrix},
\end{equation}
where $M'^t J_{n} M'^q = J_{n}$, $\det(M') \ne 0$, $a \in \GF(q^2)$, $a^{q+1} = 1$. Then $|Stab_{\U(2n+1, q^2)}(\langle \u \rangle_{q^2})| = (q+1) |\U(2n, q^2)|$. Hence $\tilde{G}_n$ is transitive on lines of $\cL \setminus \cL_1$, since $|\U(2n+1, q^2)/Stab_{\U(2n+1, q^2)}(\langle \u \rangle_{q^2})| = |\cL \setminus \cL_1|$.

On the other hand, $M \in \SU(2n, q^2)$ stabilizes $\langle \u \rangle_{q^2}$ if and only if $M$ is of type \eqref{matrix_1}, where $M'^t J_{n} M'^q = J_{n}$, $\det(M') \ne 0$ and $a = \det(M')^{-1}$. Then $|Stab_{\SU(2n+1, q^2)}(\langle \u \rangle_{q^2})| = |\U(2n, q^2)|$. Therefore $|\SU(2n+1, q^2)/Stab_{\SU(2n+1, q^2)}(\langle \u \rangle_{q^2})| = |\cL \setminus \cL_1|$ and $\tilde{K}_n$ is transitive on lines of $\cL \setminus \cL_1$.
\end{proof}

\begin{theorem}
The group $\tilde{G}_n$ has $n+1$ orbits, say $\tilde{\cO}_{n, i}$, $0 \le i \le n$, on generators of $\cQ^-(4n+1, q)$, where
$$
|\tilde{\cO}_{n, 0}| = q^{n^2+n} \prod_{j = 2}^{n+1} (q^{2j-1} +1), \quad |\tilde{\cO}_{n, n}| = \prod_{j = 2}^{n+1} (q^{2j-1} + 1),
$$
$$
|\tilde{\cO}_{n, i}| = q^{(n-i)(n-i+1)} \frac{\prod_{j = n-i+1}^{n} (q^{2j} - 1)}{\prod_{j = 1}^{i} (q^{2j} - 1)} \prod_{j = 2}^{n+1} (q^{2j-1} +1), \quad 1 \le i \le n-1,
$$
and a member of $\tilde{\cO}_{n, i}$ contains exactly $(q^{2i}-1)/(q^2-1)$ lines of $\cL_1$. 
\end{theorem}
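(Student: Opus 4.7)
I would argue by induction on $n$, mirroring the structure of Theorem~\ref{hyp}. The base case $n=1$ is immediate: $\cQ^-(5,q)$ has $(q^2+1)(q^3+1)$ generators, the $q^3+1$ of these in $\cL_1$ form $\tilde{\cO}_{1,1}$, and $\tilde G_1 \cong \PU(3,q^2)/\langle -I\rangle$ acts transitively on each of the two sets by Witt's theorem applied to $\U(3,q^2)$ on isotropic and non-isotropic $1$-dimensional subspaces of $V(3,q^2)$.

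For the inductive step, first classify generators by a $\GF(q^2)$-invariant: for each generator $\Pi$, let $U_\Pi \subseteq V(2n+1,q^2)$ be the largest $\GF(q^2)$-subspace with $\phi(U_\Pi)\subseteq \Pi$. Using $Q(\phi(\x))=H(\x,\x)$ and polarizing, $B_Q(\phi(\x),\phi(\y))=H(\x,\y)+H(\y,\x)$ vanishes on $\phi^{-1}(\Pi)$, so $H(\x,\y)$ has zero trace over $\GF(q^2)/\GF(q)$ whenever $\x,\y \in \phi^{-1}(\Pi)$; combining this with $\GF(q^2)$-linearity $H(\lambda\x,\y)=\lambda H(\x,\y)$ on the $\GF(q^2)$-closed subspace $U_\Pi$ forces $H$ to vanish identically on $U_\Pi$. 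Hence $U_\Pi$ is totally isotropic for $H$; setting $i:=\dim_{q^2}U_\Pi \in \{0,\ldots,n\}$, the lines of $\cL_1$ contained in $\Pi$ correspond to the $\GF(q^2)$-lines of $U_\Pi$, hence number $(q^{2i}-1)/(q^2-1)$.

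For $1 \le i \le n$, fix isotropic $\u \in V(2n+1,q^2)$ and $\ell=\phi(\langle\u\rangle_{q^2}) \in \cL_1$; Witt's theorem gives transitivity of $\tilde G_n$ on $\cL_1$. Let $\Lambda$ be the $\phi$-image of a $\GF(q^2)$-subspace complementary to a hyperbolic plane through $\langle\u\rangle_{q^2}$ in $V(2n+1,q^2)$. Then $\Lambda \cap \cQ^-(4n+1,q)=\cQ^-(4n-3,q)$, which carries the induced Desarguesian line-spread, and the image of $Stab_{\tilde G_n}(\ell)$ on $\Lambda$ is $\tilde G_{n-1}$. Generators of $\cQ^-(4n+1,q)$ through $\ell$ correspond bijectively with generators of $\cQ^-(4n-3,q)$ via $\Pi \mapsto \Pi \cap \Lambda$; by the $U_\Pi$-classification this sends $\tilde\cO_{n,i}$-members through $\ell$ to $\tilde\cO_{n-1,i-1}$-members. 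By induction $\tilde G_{n-1}$ is transitive on $\tilde\cO_{n-1,i-1}$, whence $\tilde G_n$ is transitive on $\tilde\cO_{n,i}$. The size follows from $|\tilde\cO_{n,i}|=|\tilde\cO_{n-i,0}| \cdot \genfrac{[}{]}{0pt}{}{n}{i}_{q^2} \prod_{j=1}^{i}(q^{2n+3-2j}+1)$, which simplifies to the stated expression.

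For $\tilde\cO_{n,0}$ I would adapt the Baer-subgeometry argument from the proof of Theorem~\ref{hyp}. For $\Pi \in \tilde\cO_{n,0}$, $U':=\phi^{-1}(\Pi)+w\phi^{-1}(\Pi)$ is a $\GF(q^2)$-hyperplane of $V(2n+1,q^2)$ and $\phi^{-1}(\Pi)$ is a Baer subgeometry of $U'$ embedded in $\cH \cap U'$. A short polarization argument rules out the tangent case for $U'$: writing a putative polar vector $\v \in (U')^\perp$ as $\v=a+wb$ with $a,b \in \phi^{-1}(\Pi)$ and using $H(\v,\cdot)|_{\phi^{-1}(\Pi)}=0$ forces $H(a,\cdot)=H(b,\cdot)=0$ on $\phi^{-1}(\Pi)$, making $\langle a,b\rangle_{\GF(q^2)}$ an isotropic $\GF(q^2)$-line contained in $\phi^{-1}(\Pi)$, contradicting $\Pi \in \tilde\cO_{n,0}$. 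Hence $(U')^\perp=\langle P^*\rangle$ is non-isotropic and $\cH \cap U' \cong \cH(2n-1,q^2)$. By Witt's theorem $\tilde G_n$ is transitive on non-isotropic points of $\PG(2n,q^2)$, and the stabilizer of $P^*$ acts on $(P^*)^\perp$ as the full $\PU(2n,q^2)$, which by Proposition~\ref{stabilizer1} is transitive on projective Baer subgeometries embedded in $\cH(2n-1,q^2)$. Each such projective Baer subgeometry lifts to $q+1$ distinct $\GF(q)$-subspaces (the $\GF(q^2)^*/\GF(q)^*$-scalings), hence to $q+1$ distinct generators $\Pi$, and the stabilizer $\langle\Delta,\Sp(2n,q)\rangle$ of Proposition~\ref{stabilizer1} acts transitively on these lifts. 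Thus $\tilde G_n$ is transitive on $\tilde\cO_{n,0}$, with
\[
|\tilde\cO_{n,0}|=(q+1) \cdot \frac{q^{2n}(q^{2n+1}+1)}{q+1} \cdot q^{n^2-n}\prod_{i=2}^{n}(q^{2i-1}+1)=q^{n(n+1)}\prod_{j=2}^{n+1}(q^{2j-1}+1),
\]
matching the claim; alternatively, $|\tilde\cO_{n,0}|$ may be recovered by subtracting $\sum_{i \ge 1}|\tilde\cO_{n,i}|$ from the total number of generators of $\cQ^-(4n+1,q)$ and applying the $q$-binomial identity $\sum_{k=0}^{n}q^{k(k+1)}\genfrac{[}{]}{0pt}{}{n}{k}_{q^2}=\prod_{k=1}^{n}(q^{2k}+1)$. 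The main obstacle is exactly this last step: establishing the polarization-based classification of $U_\Pi$ in the odd-dimensional Hermitian setting, verifying the non-degeneracy of $U'$, and combining the transitivity of $\tilde G_n$ on projective Baer subgeometries with the $(q+1)$-fold lifting to $\GF(q)$-subspaces in order to correctly match the orbit size inside $\tilde G_n=\U(2n+1,q^2)/\langle -I\rangle$.
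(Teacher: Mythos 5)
Your proof takes a genuinely different route from the paper's. The paper does not redo the induction in the elliptic setting: it deduces the theorem from Theorem~\ref{hyp} by a double count. Since $\tilde{G}_n$ is transitive on $\cL \setminus \cL_1$ (Proposition~\ref{points_1}) and, for $\ell \in \cL \setminus \cL_1$, one has $\ell^\perp \cap \cQ^-(4n+1, q) = \cQ^+(4n-1, q)$ with $Stab_{\tilde{G}_n}(\ell) \simeq (q+1) \times G_n$, the stabilizer has exactly the $n+1$ orbits $\cO_{n, i}$ of Theorem~\ref{hyp} on the generators contained in $\ell^\perp$; counting in two ways the pairs $(\ell, g)$ with $\ell \in \cL \setminus \cL_1$ and $\ell \subset g^\perp$ (a generator $g$ containing $(q^{2i}-1)/(q^2-1)$ lines of $\cL_1$ lies in $\ell^\perp$ for exactly $q^{2i}$ such $\ell$) yields both the orbit structure and the sizes in a few lines. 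Your argument instead re-runs the inductive proof of Theorem~\ref{hyp} inside $\cQ^-(4n+1, q)$, via the invariant $U_\Pi$ and a Baer-subgeometry analysis of $\tilde{\cO}_{n, 0}$. This is more self-contained but considerably longer, and it obliges you to reprove odd-dimensional analogues of Proposition~\ref{points} (e.g.\ that $Stab_{\tilde{G}_n}(\ell)$, $\ell \in \cL_1$, induces $\tilde{G}_{n-1}$ on $\Lambda$), which you assert rather than verify. The classification of generators by $U_\Pi$, the count of lines of $\cL_1$, and the size bookkeeping are all correct.

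The one step that does not go through as written is the exclusion of the tangent case for $U'$. Writing a generator of the radical as $a + wb$ with $a, b \in W := \phi^{-1}(\Pi)$, your polarization correctly gives $H(a, \cdot) = H(b, \cdot) = 0$ on $W$ (the values $H(\x, \y)$ for $\x, \y \in W$ are trace-zero, so lie in $\xi \GF(q)$, and $1, w$ are $\GF(q)$-independent), hence $a, b \in (U')^\perp$. But $(U')^\perp$ is one-dimensional over $\GF(q^2)$, so $a$ and $b$ are $\GF(q^2)$-proportional, and $\langle a, b \rangle_{\GF(q^2)}$ is a line contained in $W$ only when $a, b$ are $\GF(q)$-independent; if $b \in \GF(q)\, a$ (or one of them vanishes) you obtain only a nonzero vector of $W \cap (U')^\perp$, which is not yet a contradiction. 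The gap is repairable: $f(\x, \y) = \xi^{-1} H(\x, \y)$ is a $\GF(q)$-bilinear alternating form on $W$ whose radical is exactly $W \cap (U')^\perp$; since $\dim_{\GF(q)} W = 2n$ is even, this radical has even dimension, so being nonzero and contained in the two-dimensional $\GF(q)$-space $(U')^\perp$ it must equal all of $(U')^\perp$, which is then an isotropic $\GF(q^2)$-line inside $W$, contradicting $\Pi \in \tilde{\cO}_{n, 0}$. Alternatively, the counting you sketch at the end also closes the gap: the map from triples (non-isotropic point, embedded Baer subgeometry of the polar hyperplane, $\GF(q)$-lifting) to generators containing no line of $\cL_1$ is injective, and equality of the two cardinalities forces surjectivity, so no generator of $\tilde{\cO}_{n, 0}$ can arise from a tangent hyperplane.
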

\begin{proof}
Let $\ell$ be a line of $\cL \setminus \cL_1$. Then $\ell^\perp \cap \cQ^-(4n+1, q) = \cQ^+(4n-1, q)$ and $Stab_{\tilde{G}_{n}}(\ell) \simeq (q+1) \times G_n$. Hence, from Theorem \ref{hyp}, the group $Stab_{\tilde{G}_n}(\ell)$ has $n+1$ orbits, say $\cO_{n, i}$, $0 \le i \le n$, on generators of $\cQ^-(4n+1, q)$ contained in $\ell^\perp$. Moreover $\tilde{G}_n$ is transitive on lines of $\cL \setminus \cL_1$. The result follows by counting in two ways the couples $(\ell, g)$, where $\ell \in \cL \setminus \cL_1$, $g$ is a generator of $\cQ^-(4n+1, q)$ containing exactly $(q^{2i}-1)/(q^2-1)$ lines of $\cL_1$ and $\ell \in g^\perp$. Indeed, on one hand we have that this number equals $|\cL \setminus \cL_1| \cdot |\cO_{n, i}| = \frac{q^{2n} (q^{2n + 1} + 1)}{q+1} \cdot |\cO_{n, i}|$, whereas on the other hand it coincides with $\left(\frac{(q^2-1)(q^{2i} - 1)}{q^2-1} + 1\right) \cdot |\tilde{\cO}_{n, i}| = q^{2i} \cdot |\tilde{\cO}_{n, i}|$.
\end{proof}


\end{document}